\theoremstyle{definition}
\newtheorem{definition}{Definition}[section]
\newtheorem{example}[definition]{Example}
\newtheorem{remark}[definition]{Remark}
\theoremstyle{plain}
\newtheorem{theorem}[definition]{Theorem}
\newtheorem{lemma}[definition]{Lemma}
\newtheorem{proposition}[definition]{Proposition}
\newtheorem{corollary}[definition]{Corollary}
\numberwithin{equation}{section}
\newcommand{\BI}{\mathbb{BI}}
\newcommand{\BZ}{\mathbb{BZL}}
\def\N{{\mathbb N}}
\def\I{{\mathbb I}}
\def\V{{\mathbb V}}
\def\W{{\mathbb W}}
\begin{document}
\title{Some Properties of Lattice Congruences Preserving Involutions and Their Largest Numbers in the Finite Case}
\author{Claudia MURE\c SAN\thanks{Dedicated to the memory of my beloved grandparents, Gheorghe and Floar\u a--Marioara Mure\c san}\\ 
{\small University of Cagliari, University of Bucharest}\\ 
{\small c.muresan@yahoo.com, cmuresan@fmi.unibuc.ro}}
\date{\today }
\maketitle

\begin{abstract} In this paper, we characterize the congruences of an arbitrary i--lattice, investigate the structure of the lattice they form and how it relates to the structure of the lattice of lattice congruences, then, for an arbitrary non--zero natural number $n$, we determine the largest possible number of congruences of an $n$--element i--lattice, along with the structures of the $n$--element i--lattices with this number of congruences. Our characterizations of the congruences of i--lattices have useful corollaries: determining the congruences of i--chains, the congruence extension property of the variety of distributive i--lattices, a description of the atoms of the congruence lattices of i--lattices, characterizations for the subdirect irreducibility of i--lattices. In terms of the relation between the above--mentioned problem on numbers of congruences of finite i--lattices and its analogue for lattices, while the $n$--element i--lattices with the largest number of congruences turn out to be exactly the $n$--element lattices whose number of congruences is either the largest or the second largest possible, we provide examples of pairs of $n$--element i--lattices and even pseudo--Kleene algebras such that one of them has strictly more congruences, but strictly less lattice congruences than the other.

\noindent {\bf Keywords}:\! (bounded) involution lattice, pseudo--Kleene algebra, Kleene lattice, De Morgan algebra,\! Brouwer\linebreak --Zadeh lattice, orthomodular lattice, antiortholattice, congruence extension property, atom, prime interval.

\noindent {\bf 2010 Mathematics Subject Classification}:  primary: 08A30; secondary: 06F99.\end{abstract}

\section{Introduction}
\label{introduction}

In \cite{gcze,kumu}, the largest numbers of congruences of finite lattices are determined, along with the structures of the finite lattices with these numbers of congruences. \cite{gcz} solves the same type of problem for finite semilattices. When we look at the full congruences of lattices with involutions, the so--called i--lattices, we find out that there exist pairs $(L,M)$ of finite i--lattices and even finite pseudo--Kleene algebras such that $L$ and $M$ have the same number of elements and $L$ has strictly more congruences than $M$, but strictly less lattice congruences than $M$, so the solution to the analogue problem for the (involution--preserving) congruences of i--lattices does not derive directly from that on lattice congruences.

In the present paper, we study the congruences of i--lattices, the structure of the lattice they form and the atoms of this lattice, then address the problem above for finite i--lattices, as well as finite algebras from classes involved in the study of quantum logics: the variety of pseudo--Kleene algebras and that of BZ--lattices. In Theorem \ref{maxcgkl} we determine the largest possible number of congruences of an $n$--element i--lattice and the structures of the $n$--element i--lattices with this number of congruences, along with the structure of their congruence lattices. These results produce analogous ones for $n$--element BZ--lattices with the $0$ meet--irreducible, which are thus antiortholattices. In particular cases, we also obtain the second largest possible number of congruences of an $n$--element i--lattice, respectively an $n$--element BZ--lattice with the $0$ meet--irreducible. We exemplify the usefulness of our results on congruences of i--lattices also by deriving from them results on congruences of i--chains, subdirect irreducibility and the congruence extension property for distributive i--lattices.

\section{Preliminaries}
\label{preliminaries}

We shall denote by $\N $ the set of the natural numbers, by $\N ^*=\N \setminus \{0\}$ and, for any $S\subseteq \N $ and any $a,b\in \N $, by $aS+b=\{an+b\ |\ n\in S\}$ and by $aS=aS+0$. For every real number $x$, $\lfloor x\rfloor $ shall be the largest integer not exceeding $x$. The disjoint union of sets shall be denoted $\amalg $. For any non--empty set $M$, we shall denote by $|M|$ the cardinality of $M$, by ${\rm Part}(M)$ the lattice of the partitions of $M$, by ${\rm Eq}(M)$ the lattice of the equivalences of $M$, by $\Delta _M=\{(x,x)\ |\ x\in M\}$ and $\nabla _M=M^2$ and by $eq:{\rm Part}(M)\rightarrow {\rm Eq}(M)$ the canonical lattice isomorphism and; if $n\in \N ^*$ and $\pi =\{M_1,\ldots ,M_n\}\in {\rm Part}(M)$, then $eq(\{M_1,\ldots ,M_n\})$ shall simply be denoted $eq(M_1,\ldots ,M_n)$.

Throughout this paper, all algebras shall be non--empty and, whenever there is no danger of confusion, they will be designated by their underlying sets. By {\em trivial algebra} we mean one--element algebra. Throughout the rest of this section, $A$ will be an arbitrary algebra from a variety $\V $ of algebras of type $\tau $. If $M$ and $N$ are algebras with reducts belonging to $\V $, then we shall denote by $M\cong _{\V }N$ the fact that these reducts of $M$ and $N$ are isomorphic. $({\rm Con}_{\V }(A),\vee ,\cap ,\subseteq ,\Delta _A,\nabla _A)$ shall be the bounded lattice of the congruences of $A$ with respect to the type $\tau $ and, for any $U\subseteq A^2$, $Cg_{A,\V }(U)$ shall be the congruence of $A$ with respect to $\tau $ generated by $U$; for any $a,b\in A$, the principal congruence $Cg_{A,\V }(\{(a,b)\})$ will simply be denoted by $Cg_{A,\V }(a,b)$. Obviously, if $A$ also belongs to a variety $\W $ which has the type $\tau $ or a type that differs from $\tau $ only by a set of constants, then ${\rm Con}_{\V }(A)={\rm Con}_{\W }(A)$ and, for all $U\subseteq A^2$, $Cg_{A,\V }(U)=Cg_{A,\W }(U)$. Clearly, if $B$ is a member of $\V $ and $f:A\rightarrow B$ is an isomorphism in $\V $, then the map $\theta \mapsto f(\theta )=\{(f(a),f(b))\ |\ (a,b)\in \theta \}$ is a lattice isomorphism from ${\rm Con}_{\V }(A)$ to ${\rm Con}_{\V }(B)$ and, for all $U\subseteq A^2$, $f(Cg_{A,\V }(U))=Cg_{B,\V }(f(U))=Cg_{B,\V }(\{(f(a),f(b))\ |\ (a,b)\in U\})$. Also, for any subalgebra $S$ of $A$ and any $\theta \in {\rm Con}_{\V }(A)$, we have $\theta \cap S^2\in {\rm Con}_{\V }(S)$, which is useful for determining the congruences of ordinal and horizontal sums (see below). We will abbreviate by {\em CEP} the {\em congruence extension property}.

\begin{theorem}{\rm \cite[Corollary 2, p. 51]{gralgu}} ${\rm Con}_{\V }(A)$ is a complete sublattice of ${\rm Eq}(A)$.\label{cgeq}\end{theorem}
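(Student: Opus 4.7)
The statement has two parts: $\mathrm{Con}_{\mathcal{V}}(A)\subseteq \mathrm{Eq}(A)$, and the inclusion respects arbitrary meets and joins. The inclusion is immediate from the definition of congruence (a congruence is in particular an equivalence relation compatible with the operations), so the real content is that arbitrary meets and arbitrary joins formed in $\mathrm{Eq}(A)$ stay inside $\mathrm{Con}_{\mathcal{V}}(A)$.

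For meets I would argue directly: given a family $(\theta_i)_{i\in I}\subseteq \mathrm{Con}_{\mathcal{V}}(A)$, the set $\bigcap_{i\in I}\theta_i$ is clearly an equivalence, and for every basic operation $f$ of arity $k$ in $\tau$, if $(a_j,b_j)\in \bigcap_{i\in I}\theta_i$ for $j=1,\ldots ,k$, then $(a_j,b_j)\in \theta_i$ for each fixed $i$, so $(f(a_1,\ldots ,a_k),f(b_1,\ldots ,b_k))\in \theta_i$ by compatibility of $\theta_i$, and hence this pair lies in the intersection. Thus $\bigcap_{i\in I}\theta_i\in \mathrm{Con}_{\mathcal{V}}(A)$, and it is simultaneously the meet in $\mathrm{Eq}(A)$ and in $\mathrm{Con}_{\mathcal{V}}(A)$.

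The harder case is the join, and this is where I expect the only real work. Using the standard description of joins in $\mathrm{Eq}(A)$, the relation $\theta=\bigvee_{i\in I}^{\mathrm{Eq}(A)}\theta_i$ consists of those $(a,b)\in A^2$ for which there exists a finite \emph{zig--zag} sequence $a=x_0,x_1,\ldots ,x_n=b$ together with indices $i_1,\ldots ,i_n\in I$ satisfying $(x_{\ell -1},x_\ell )\in \theta_{i_\ell }$ for each $\ell =1,\ldots ,n$. This $\theta$ is automatically an equivalence, so it suffices to verify compatibility with every $k$--ary basic operation $f$. Given $(a_j,b_j)\in \theta$ for $j=1,\ldots ,k$, I would pick for each $j$ a zig--zag chain $a_j=x_{j,0},x_{j,1},\ldots ,x_{j,n_j}=b_j$ with labels in $(\theta_i)_{i\in I}$, and extend all of them (appending trivial steps $(b_j,b_j)\in \theta_i$ for any $i$) to a common length $N$. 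Then the crucial trick is to pass from $f(a_1,\ldots ,a_k)$ to $f(b_1,\ldots ,b_k)$ by changing \emph{one coordinate at a time}: a single step $(f(\ldots ,x_{j,\ell -1},\ldots ),f(\ldots ,x_{j,\ell },\ldots ))$ lies in whichever $\theta_{i}$ contains $(x_{j,\ell -1},x_{j,\ell })$, by compatibility of that individual $\theta_i$. Concatenating these single--coordinate steps over $j=1,\ldots ,k$ and $\ell =1,\ldots ,N$ produces a zig--zag chain from $f(a_1,\ldots ,a_k)$ to $f(b_1,\ldots ,b_k)$ witnessing their $\theta$--relatedness.

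This shows $\theta\in \mathrm{Con}_{\mathcal{V}}(A)$; since it is an upper bound for the $\theta_i$ lying below every congruence that is an upper bound (even in $\mathrm{Eq}(A)$), it is the join in $\mathrm{Con}_{\mathcal{V}}(A)$ as well. The main obstacle, as indicated, is exactly the one--coordinate--at--a--time manoeuvre for joins; once that is in place the completeness of the sublattice inclusion is a formal consequence of the two bullet points above applied to arbitrary (possibly infinite) families.
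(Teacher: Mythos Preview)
The paper does not prove this theorem at all: it is stated with a citation to Gr\"atzer's \emph{Universal Algebra} and used as a black box. There is therefore no ``paper's own proof'' to compare against. Your argument is the standard textbook proof and is correct; in particular, the zig--zag description of the join in $\mathrm{Eq}(A)$ and the one--coordinate--at--a--time verification of compatibility are exactly how this result is usually established. (Incidentally, padding the chains to a common length $N$ is unnecessary for the coordinate--by--coordinate approach, since you process the coordinates sequentially anyway; but it does no harm.)
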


\begin{corollary} If a reduct of $A$ is a member of a variety $\W $, then ${\rm Con}_{\V }(A)$ is a complete sublattice of ${\rm Con}_{\W }(A)$.\label{cgred}\end{corollary}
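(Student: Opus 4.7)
The plan is to reduce the statement to two ingredients: the set-theoretic inclusion ${\rm Con}_{\V }(A)\subseteq {\rm Con}_{\W }(A)$, and a double application of Theorem \ref{cgeq}. Concretely, let $B$ denote the reduct of $A$ that lies in $\W $, so $B$ has the same underlying set as $A$ but its type is obtained from $\tau $ by dropping (or reinterpreting as non-fundamental) some operations.

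First I would verify that every $\V $-congruence of $A$ is automatically a $\W $-congruence of $B$. This is immediate: a $\V $-congruence $\theta \in {\rm Con}_{\V }(A)$ is, by definition, an equivalence on the common underlying set that is compatible with all $\tau $-operations of $A$; in particular it is compatible with the (smaller) collection of operations of $B$, hence $\theta \in {\rm Con}_{\W }(B)$. This gives the inclusion ${\rm Con}_{\V }(A)\subseteq {\rm Con}_{\W }(B)$ as sets of equivalences on the same underlying set.

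Next, I would apply Theorem \ref{cgeq} twice. It gives that ${\rm Con}_{\V }(A)$ is a complete sublattice of ${\rm Eq}(A)$ and, because $A$ and $B$ share the same underlying set, that ${\rm Con}_{\W }(B)$ is also a complete sublattice of ${\rm Eq}(A)$. Consequently, arbitrary joins and meets in either ${\rm Con}_{\V }(A)$ or ${\rm Con}_{\W }(B)$ are computed as in ${\rm Eq}(A)$. So for any family $(\theta _i)_{i\in I}$ in ${\rm Con}_{\V }(A)$, its join (resp.\ meet) taken in ${\rm Con}_{\V }(A)$ coincides with the join (resp.\ meet) taken in ${\rm Eq}(A)$, which in turn coincides with the join (resp.\ meet) taken in ${\rm Con}_{\W }(B)$ since the $\theta _i$'s also live there. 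This is exactly what it means for ${\rm Con}_{\V }(A)$ to be a complete sublattice of ${\rm Con}_{\W }(B)$.

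There is no real obstacle beyond bookkeeping; the only subtle point to spell out is why the two lattices share the same ambient ${\rm Eq}$, which holds because ``reduct'' only affects the operations, not the carrier. The slight generalization allowed by ``a type that differs from $\tau $ only by a set of constants'' — already recorded in the preliminaries — causes no additional difficulty, since adding or forgetting constants changes neither the equivalences involved nor the compatibility condition on the remaining operations.
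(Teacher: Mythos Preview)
Your proof is correct and is precisely the argument the paper has in mind: the corollary is stated without proof, as an immediate consequence of Theorem~\ref{cgeq}, and you have simply spelled out that consequence (inclusion of congruence sets plus the fact that both ${\rm Con}_{\V}(A)$ and ${\rm Con}_{\W}(B)$ compute their complete joins and meets in the common ambient ${\rm Eq}(A)$). There is nothing to add.
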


If $A$ has an underlying lattice with a $0$, then we will denote by ${\rm Con}_{\V 0}(A)=\{\theta \in {\rm Con}_{\V }(A)\ |\ 0/\theta =\{0\}\}$, which is a complete sublattice of ${\rm Con}_{\V }(A)$ and thus a bounded lattice \cite{rgcmfp}; if $A$ has a bounded lattice reduct, then we shall denote by ${\rm Con}_{\V 01}(A)=\{\theta \in {\rm Con}_{\V }(A)\ |\ 0/\theta =\{0\},1/\theta =\{1\}\}$, which is a complete sublattice of ${\rm Con}_{\V }(A)$ and thus a bounded lattice \cite{rgcmfp}; these results follow routinely from Theorem \ref{cgeq}. In the particular case when $\V $ is the variety of lattices, the index $\V $ shall be eliminated from these notations, as well as those in the paragraph preceeding Theorem \ref{cgeq}.

Recall that, if $(L,\leq ^L)$ is a lattice with greatest element $1^L$ and $(M,\leq ^M)$ is a lattice with smallest element $0^M$, then the {\em ordinal sum} of $L$ with $M$ is the lattice $(L\oplus M,\leq )$ defined by identifying $1^L=0^M=c$ and setting $L\oplus M=(L\setminus \{1^L\})\amalg \{c\}\amalg (M\setminus \{0^M\})$ and $\leq =\leq ^L\cup \leq ^M$. For any $\alpha \in {\rm Con}(L)$ and any $\beta \in {\rm Con}(M)$, we shall denote by $\alpha \oplus \beta =eq((L/\alpha \setminus \{c/\alpha \})\cup \{c/\alpha \cup c/\beta \}\cup (M/\beta \setminus \{c/\beta \}))$. Note that the map $(\alpha ,\beta )\mapsto \alpha \oplus \beta $ sets a lattice isomorphism between ${\rm Con}(L)\times {\rm Con}(M)$ and ${\rm Con}(L\oplus M)$ \cite{eunoucard}. Clearly, the ordinal sum of bounded lattices is an associative operation, and so is the operation $\oplus $ on congruences defined above.

If $(L,\leq ^L,0^L,1^L)$ and $(M,\leq ^M,0^M,1^M)$ are non--trivial bounded lattices, then the {\em horizontal sum} of $L$ with $M$ is the bounded lattice $(L\boxplus M,\leq ,0,1)$ defined by identifying $0^L=0^M=0$ and $1^L=1^M=1$ and setting $L\boxplus M=(L\setminus \{0^L,1^L\})\amalg \{0,1\}\amalg (M\setminus \{0^M,1^M\})$ and $\leq =\leq ^L\cup \leq ^M$. For any $\delta \in {\rm Eq}(L)\setminus \{\nabla _L\}$ and any $\varepsilon \in {\rm Eq}(M)\setminus \{\nabla _M\}$, we shall denote by $\delta \boxplus \varepsilon =eq((L/\delta \setminus \{0/\delta ,1/\delta \})\cup \{0/\delta \cup 0/\varepsilon ,1/\delta \cup 1/\varepsilon \}\cup (M/\varepsilon \setminus \{0/\varepsilon ,1/\varepsilon \}))\in {\rm Eq}(L\boxplus M)$. Clearly, the horizontal sum of non--trivial bounded lattices is a commutative and associative operation, and so is the operation $\boxplus $ on equivalences, which does not always produce congruences when applied to congruences.

Unless we specify otherwise, we shall denote the operations and order relation of a (bounded) lattice in the usual way; the cover relation of a lattice shall be denoted by $\prec $. For any $n\in \N ^*$, we shall denote by ${\cal L}_n$ the $n$--element chain; note that, in \cite{rgcmfp}, it has been denoted by $D_n$ instead of ${\cal L}_n$. We have ${\rm Con}({\cal L}_n)\cong {\cal L}_2^{n-1}$, because ${\rm Con}({\cal L}_1)\cong {\cal L}_1\cong {\cal L}_2^0$ and, if $n\geq 2$, then $\displaystyle {\cal L}_n\cong \bigoplus _{i=1}^{n-1}{\cal L}_2$, hence ${\rm Con}({\cal L}_n)\cong ({\rm Con}({\cal L}_2))^{n-1}\cong {\cal L}_2^{n-1}$. We shall denote by $M_3$ the five--element modular non--distributive lattice and by $N_5$ the five--element non--modular lattice.

Following \cite{kumu}, we shall denote by ${\rm At}(L)$ the set of atoms of a lattice $L$ with smallest element. Now let $L$ be an arbitrary lattice. Then $L^d$ will denote the dual of $L$, so that, clearly, ${\rm Con}(L)={\rm Con}(L^d)$. Let $u\in L$; if $u$ is strictly meet--irreducible in $L$, then we will denote by $u^{+}$ the unique successor of $u$ in $L$, while, if $u$ is strictly join--irreducible, then $u^{-}$ will denote its unique predecessor. If $x,y\in L$, then $[x)$ and $(x]$ shall denote the principal filter and the principal ideal of $L$ generated by $x$, and $[x,y]=[x)\cap (y]$ shall be the interval of $L$ bounded by $x$ and $y$; when $x,y\in \N $ and $L$ is not otherwise specified, $L$ will be the lattice $(\N ,\leq )$, where $\leq $ is the natural order. Recall that the interval $[x,y]$ is called a {\em narrows} iff $x\prec y$, $x$ is meet--irreducible and $y$ is join--irreducible in $L$, which, in the case when $L$ is finite, means that $x$ is strictly meet--irreducible, $y$ is strictly join--irreducible, $x^{+}=y$ and $y^{-}=x$. We will denote by ${\rm Nrw}(L)$ the set of the narrows of $L$. Also, ${\rm Cvx}(L)$ will denote the set of the non--empty convex subsets of $L$.

\section{The Algebras We Work With}

We shall use the terminology and notations from \cite{rgcmfp} in what follows. Note that, in \cite{kal}, Kleene lattices are called {\em bounded normal i--lattices}.

\begin{definition} We call a {\em lattice with involution} or {\em involution lattice} (in brief, {\em i--lattice}) an algebra $(L,\vee ,\wedge ,\leq ,\cdot ^{\prime })$, where $(L,\vee ,\wedge ,\leq )$ is a lattice and $\cdot ^{\prime }$ is a unary operation on $L$, called {\em involution}, that fulfills, for all $a,b\in L$: $a^{\prime \prime }=a$ and, if $a\leq b$, then $b^{\prime }\leq a^{\prime }$. For any i--lattice $L$, we consider the following condition:

$K(L):\quad $ for all $a,b\in L$, $a\wedge a^{\prime }\leq b\vee b^{\prime }$.

A {\em bounded involution lattice} (in brief, {\em bi--lattice}) is an algebra $(L,\vee ,\wedge ,\leq ,\cdot ^{\prime },0,1)$, where $(L,\vee ,\wedge ,\leq ,0,1)$ is a bounded lattice and $(L,\vee ,\wedge ,\leq ,\cdot ^{\prime })$ is an i--lattice. Distributive bi--lattices are called {\em De Morgan algebras}.

A bi--lattice $L$ fulfilling $K(L)$ is called {\em pseudo--Kleene algebra}. Distributive pseudo--Kleene algebras are called {\em Kleene algebras} or {\em Kleene lattices}.

A bi--lattice $L$ is said to be {\em paraorthomodular} iff, for all $a,b\in L$, if $a\leq b$ and $a^{\prime}\wedge b=0$, then $a=b$.

A {\em Brouwer--Zadeh lattice} (in brief, {\em BZ--lattice}) is an algebra $(L,\vee ,\wedge ,\leq ,\cdot ^{\prime },\cdot ^{\sim },0,1)$, where $(L,\vee ,\wedge ,\leq ,\cdot ^{\prime },0,1)$ is a pseudo--Kleene algebra and $\cdot ^{\sim }$ is a unary operation on $L$, called {\em Brouwer complement}, such that, for all $a,b\in L$, we have $a\wedge a^{\sim }=0$, $a\leq a^{\sim \sim }$, $a^{\sim \prime }=a^{\sim \sim }$ and, if $a\leq b$, then $b^{\sim }\leq a^{\sim }$.\label{thealg}\end{definition}

Recall that {\em orthomodular lattices} are exactly the paraorthomodular BZ--lattices whose Kleene complement coincides to their Brouwer complement, while {\em antiortholattices} are exactly the pseudo--Kleene algebras $L$ with the property that $\{a\in L\ |\ a\wedge a^{\prime }=0\}=\{0,1\}$, endowed with the following Brouwer complement, called the {\em trivial Brouwer complement}: $0^{\sim }=1$ and $a^{\sim }=0$ for all $a\in L\setminus \{0\}$, which makes them paraorthomodular BZ--lattices.

We will denote by $\I $ the variety of i--lattices. The variety of bi--lattices is denoted by $\BI $ and that of Brouwer--Zadeh lattices by $\BZ $. Note that antiortholattices do not form a variety.

Obviously, any i--lattice $L$ is self--dual, with $\cdot ^{\prime }:L\rightarrow L$ a dual lattice isomorphism, thus, clearly, $\cdot ^{\prime }:L\rightarrow L^d$ is an i--lattice isomorphism between the i--lattices $L$ and $L^d$ endowed with the same involution $\cdot ^{\prime }$.

If $L$ is an i--lattice, then, for any $M\subseteq L$ and any $U\subseteq L^2$, we shall denote by $M^{\prime }=\{x^{\prime }\ |\ x\in M\}$ and by $U^{\prime }=\{(a^{\prime },b^{\prime })\ |\ (a,b)\in U\}$. Then, for any subset $U$ of $L$ or $L^2$, $U\subseteq U^{\prime }$ iff $U^{\prime }\subseteq U^{\prime \prime }=U$ iff $U=U^{\prime }$.

Let $M$ be a bounded lattice, $f:M\rightarrow M^d$ be a dual lattice isomorphism, and $K$ be a bi--lattice, with the involution $\cdot ^{\prime K}$. If we define $\cdot ^{\prime }:M\oplus K\oplus M^d\rightarrow M\oplus K\oplus M^d$ by: $x^{\prime }=\begin{cases}f(x), & \mbox{for all }x\in M,\\ x^{\prime K}, & \mbox{for all }x\in K,\\ f^{-1}(x), & \mbox{for all }x\in M^d,\end{cases}$ then, clearly, $M\oplus K\oplus M^d$ becomes a bi--lattice, with the involution $\cdot ^{\prime }$; we shall always refer to this structure for a bi--lattice of this form. Note that, if $K$ is a pseudo--Kleene algebra, then $M\oplus K\oplus M^d$ becomes a pseudo--Kleene algebra.

We shall often use the remarks in this paper without referencing them.

\begin{remark} Any i--chain fulfills $K(L)$, thus any bi--chain is a Kleene algebra. Moreover, any bi--chain becomes an antiortholattice with the trivial Brouwer complement. Furthermore, by {\cite{rgcmfp}}, $M$ is a non--trivial bounded lattice and $K$ is a pseudo--Kleene algebra, then $M\oplus K\oplus M^d$ becomes an antiortholattice with the trivial Brouwer complement $\cdot ^{\sim }:M\oplus K\oplus M^d\rightarrow M\oplus K\oplus M^d$.\label{theconstr}\end{remark}

\begin{lemma} Any BZ--lattice with $0$ meet--irreducible (equivalently, with $1$ join--irreducible) is an antiortholattice. In particular, any BZ--chain is an antiortholattice.\label{01irred}\end{lemma}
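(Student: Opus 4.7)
The plan is to unpack the definition of an antiortholattice and verify its two ingredients directly from the BZ--lattice axioms together with meet--irreducibility of $0$. First I would dispose of the ``equivalently'' clause: since $\cdot ^{\prime }$ is a dual lattice isomorphism of $L$ sending $0$ to $1$, a meet decomposition $0=a\wedge b$ corresponds bijectively to a join decomposition $1=a^{\prime }\vee b^{\prime }$, so $0$ is meet--irreducible iff $1$ is join--irreducible.

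Next I would prove that $\{a\in L\mid a\wedge a^{\prime }=0\}=\{0,1\}$. The inclusion $\{0,1\}\subseteq \{a\mid a\wedge a^{\prime }=0\}$ is immediate, using $0^{\prime }=1$ in any bi--lattice. Conversely, given $a\in L$ with $a\wedge a^{\prime }=0$, meet--irreducibility of $0$ forces $a=0$ or $a^{\prime }=0$, and the latter yields $a=a^{\prime \prime }=0^{\prime }=1$. This proves that the underlying pseudo--Kleene algebra of $L$ is of antiortholattice type.

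It then remains to verify that the Brouwer complement $\cdot ^{\sim }$ given on $L$ coincides with the trivial one. For $a\neq 0$, the BZ--axiom $a\wedge a^{\sim }=0$ and meet--irreducibility of $0$ force $a^{\sim }=0$ (since $a\neq 0$). For $a=0$, I would use the axiom $a^{\sim \prime }=a^{\sim \sim }$ applied at $a=1$: together with $1\wedge 1^{\sim }=1^{\sim }=0$, this gives $0^{\sim }=1^{\sim \sim }=1^{\sim \prime }=0^{\prime }=1$. Hence $\cdot ^{\sim }$ is precisely the trivial Brouwer complement, so $L$ is an antiortholattice. The ``in particular'' assertion is then immediate, since in any chain the element $0$ is trivially meet--irreducible.

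There is no real obstacle here beyond keeping careful track of the BZ--lattice axioms; the only mildly delicate step is identifying $0^{\sim }$ with $1$, for which the axiom $a^{\sim \prime }=a^{\sim \sim }$ (evaluated at $a=1$) is the clean lever.
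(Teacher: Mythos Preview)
Your proof is correct and follows essentially the same approach as the paper's: use meet--irreducibility of $0$ on $a\wedge a^{\sim}=0$ to force the Brouwer complement to be trivial, and on $a\wedge a^{\prime}=0$ to force $a\in\{0,1\}$. You are simply more explicit than the paper in justifying the ``equivalently'' clause, the value $0^{\sim}=1$, and the chain case, all of which the paper leaves implicit.
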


\begin{proof} Let $L$ be a BZ--lattice with $0$ meet--irreducible and let $a\in L$. Then $a\wedge a^{\sim }=0$, hence $a=0$ or $a^{\sim }=0$, thus the Brouwer complement of $L$ is the trivial Brouwer complement. If $a\wedge a^{\prime}=0$, then $a=0$ or $a^{\prime}=0$, hence $a\in \{0,1\}$. Therefore $L$ is an antiortholattice.\end{proof}

Let $L$ and $M$ be non--trivial bi--lattices, with involutions $\cdot ^{\prime L}$ and $\cdot ^{\prime M}$, respectively. Then $\cdot ^{\prime }:L\boxplus M\rightarrow L\boxplus M$, defined by: $x^{\prime }=\begin{cases}x^{\prime L}, & \mbox{for all }x\in L,\\ x^{\prime M}, & \mbox{for all }x\in M,\end{cases}$ is clearly an involution. Unless mentioned otherwise, any horizontal sum of non--trivial bi--lattices will be organized as a bi--lattice in this way. Hence, for instance, ${\cal L}_2^2\cong {\cal L}_3\boxplus {\cal L}_3$, but ${\cal L}_2^2\ncong _{\BI }{\cal L}_3\boxplus {\cal L}_3$, where by the bi--lattice ${\cal L}_2^2$ we mean, of course, the direct product of bi--lattices ${\cal L}_2\times {\cal L}_2$.

\section{Congruences of i--lattices}

\begin{remark} Let $\V $ be a variety. For any member $M$ of $\V $, any $n\in \N $ and any $x_1,\ldots ,x_n\in M$, we denote by ${\rm Con}_{\V ,x_1,\ldots ,x_n}(M)=\{\theta \in {\rm Con}_{\V }(M)\ |\ (\forall \, i\in [1,n])\, (x_i/\theta =\{a_i\})\}$. Then it is easy to derive from Theorem \ref{cgeq} that ${\rm Con}_{\V ,x_1,\ldots ,x_n}(M)$ is a sublattice of ${\rm Con}_{\V }(M)$ and a bounded lattice.

It is routine to prove that, if $A$ and $B$ are members of $\V $ such that $A\times B$ has no skew congruences, then, for any $n\in \N $, any $a_1,\ldots ,a_n\in A$ and any $b_1,\ldots ,b_n\in B$, we have ${\rm Con}_{\V ,(a_1,b_1),\ldots ,(a_n,b_n)}(A\times B)=\{\alpha \times \beta \ |\ \alpha \in {\rm Con}_{\V ,a_1,\ldots ,a_n}(A),\beta \in {\rm Con}_{\V ,b_1,\ldots ,b_n}(B)\}\cong {\rm Con}_{\V ,a_1,\ldots ,a_n}(A)\times {\rm Con}_{\V ,b_1,\ldots ,b_n}(B)$.\end{remark}

The algebras in Definition \ref{thealg} have underlying lattices, hence they are congruence--dis\-tri\-bu\-tive by Corollary \ref{cgred} and thus the varieties they form have no skew congruences by \cite{bj} and \cite[Theorem $8.5$, p. $85$]{fremck}; hence, for any $\V \in \{\BI ,\BZ \}$ and any $L,M\in \V $, we have ${\rm Con}_{\V 01}(L\times M)=\{\alpha \times \beta \ |\ \alpha \in {\rm Con}_{\V 01}(L),\beta \in {\rm Con}_{\V 01}(M)\}\cong {\rm Con}_{\V 01}(L)\times {\rm Con}_{\V 01}(M)$ by the previous remark. Of course, if $\W $ is a variety of algebras with bounded lattice reducts and $L$ is a member of $\W $ whose underlying bounded lattice is self--dual, in particular if the algebras from $\W $ have bi--lattice reducts, in particular if $\W \in \{\BI ,\BZ \}$, then ${\rm Con}_{\W 01}(L)={\rm Con}_{\W 0}(L)$.

By Corollary \ref{cgred}, for any i--lattice $L$, ${\rm Con}_{\I }(L)$ is a complete sublattice of ${\rm Con}(L)$, so $L$ is subdirectly irreducible if its lattice reduct is, and, for any BZ--lattice $L$, ${\rm Con}_{\BZ }(L)$ is a complete sublattice of ${\rm Con}_{\I }(L)$, so $L$ is subdirectly irreducible if its bi--lattice reduct is.

\begin{remark} For any i--lattice $L$, since $\cdot ^{\prime }$ is a dual lattice automorphism of $L$, it follows that the map $\theta \mapsto \theta ^{\prime }$ is a lattice automorphism of ${\rm Con}(L)$, hence this map is a bijection from ${\rm At}({\rm Con}(L))$ to itself and, for all $\alpha ,\beta \in {\rm Con}(L)$ and all $U\subseteq L^2$, $(\alpha \cap \beta )^{\prime }=\alpha ^{\prime }\cap \beta ^{\prime }$, $(\alpha \vee \beta )^{\prime }=\alpha ^{\prime }\vee \beta ^{\prime }$ and $(Cg_L(U))^{\prime }=Cg_L(U^{\prime })$; in particular, for all $a,b\in L$, $(Cg_L(a,b))^{\prime }=Cg_L(a^{\prime },b^{\prime })$.\label{primecglat}\end{remark}

\begin{remark} For any i--lattice $L$, any $\varepsilon \in {\rm Eq}(L)$ and any $a,b\in L$, we have $(a,b)\in \varepsilon $ iff $(a^{\prime },b^{\prime })\in \varepsilon ^{\prime }$, hence $a^{\prime }/\varepsilon ^{\prime }=\{u^{\prime }\ |\ u\in a/\varepsilon \}=(a/\varepsilon )^{\prime }$, therefore: $\varepsilon =\varepsilon ^{\prime }$ iff, for all $x\in L$, $x/\varepsilon =x/\varepsilon ^{\prime }$, iff, for all $x\in L$, $x^{\prime }/\varepsilon =x^{\prime }/\varepsilon ^{\prime }$, iff, for all $x\in L$, $x^{\prime }/\varepsilon =(x/\varepsilon )^{\prime }$.\label{cgprime}\end{remark}

\begin{lemma} For any i--lattice $L$, ${\rm Con}_{\I }(L)=\{\theta \in {\rm Con}(L)\ |\ \theta =\theta ^{\prime }\}=\{\theta \vee \theta ^{\prime }\ |\ \theta \in {\rm Con}(L)\}=\{\theta \cap \theta ^{\prime }\ |\ \theta \in {\rm Con}(L)\}=\{\theta \in {\rm Con}(L)\ |\ (\forall \, x\in L)\, ((x/\theta )^{\prime }=x^{\prime }/\theta )\}=\{\theta \in {\rm Con}(L)\ |\ (\forall \, \gamma \in L/\theta )\, (\gamma ^{\prime }\in L/\theta )\}$.\label{cginvlat}\end{lemma}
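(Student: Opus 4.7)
The statement bundles five equalities, so the plan is to prove them via a short chain of equivalences, leaning heavily on Remarks \ref{primecglat} and \ref{cgprime}. Label the five sets, in order of appearance, as $S_1,S_2,S_3,S_4,S_5,S_6$.

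First I would dispose of $S_1=S_2$. By definition, a lattice congruence $\theta$ belongs to ${\rm Con}_{\I}(L)$ iff it is compatible with $\cdot^\prime$, which says exactly $\theta^\prime\subseteq\theta$. Applying $\cdot^\prime$ to both sides (which, by Remark \ref{primecglat}, preserves inclusions on ${\rm Con}(L)$) yields $\theta=\theta^{\prime\prime}\subseteq\theta^\prime$, so in fact $\theta=\theta^\prime$. The converse is immediate.

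Next I would handle the ``coset'' descriptions $S_2=S_5=S_6$. The equality $S_2=S_5$ is literally the content of Remark \ref{cgprime}, applied to an equivalence that happens to be a congruence. For $S_5=S_6$: the inclusion $S_5\subseteq S_6$ is trivial, since any $\gamma\in L/\theta$ has the form $x/\theta$ and then $\gamma^\prime=(x/\theta)^\prime=x^\prime/\theta\in L/\theta$. Conversely, if every class of $\theta$ has its involution image again a class, then for $x\in L$ there is some $y\in L$ with $(x/\theta)^\prime=y/\theta$; since $x^\prime\in(x/\theta)^\prime$, the class $y/\theta$ must be $x^\prime/\theta$, giving $(x/\theta)^\prime=x^\prime/\theta$.

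Finally the ``generation'' descriptions $S_2=S_3=S_4$ reduce to the automorphism property in Remark \ref{primecglat}. If $\theta\in S_2$, then $\theta=\theta\vee\theta^\prime=\theta\cap\theta^\prime$, so $\theta\in S_3\cap S_4$ (taking $\eta:=\theta$). Conversely, if $\theta=\eta\vee\eta^\prime$ for some $\eta\in{\rm Con}(L)$, then $\theta^\prime=(\eta\vee\eta^\prime)^\prime=\eta^\prime\vee\eta^{\prime\prime}=\eta^\prime\vee\eta=\theta$, so $\theta\in S_2$; the argument for $\eta\cap\eta^\prime$ is identical using the meet rule in Remark \ref{primecglat}.

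There is no real obstacle here: every step is either a one-line manipulation of cosets or an invocation of the two preceding remarks, and the only thing to be careful about is pointing the implications in the right direction (in particular, not confusing ``$\theta^\prime\subseteq\theta$'' with the stronger ``$\theta=\theta^\prime$'' at the very first step, which is what makes $S_1=S_2$ not just a restatement).
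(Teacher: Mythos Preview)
Your proposal is correct and follows essentially the same route as the paper's proof: the paper also reduces $S_1=S_2$ to the observation that compatibility with the involution amounts to one of the inclusions $\theta\subseteq\theta'$ or $\theta'\subseteq\theta$ (either works, by the involutivity remark preceding the lemma), derives $S_2=S_3=S_4$ from $(\theta\vee\theta')'=\theta\vee\theta'$ and $(\theta\cap\theta')'=\theta\cap\theta'$ via Remark~\ref{primecglat}, and invokes Remark~\ref{cgprime} for the last two equalities. Your explicit treatment of $S_5=S_6$ is slightly more detailed than the paper's one-line reference, but the argument is the same.
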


\begin{proof} Let $\theta \in {\rm Con}(L)$. Then $\theta $ preserves the involution of $L$ iff, for all $(a,b)\in \theta $, it follows that $(a^{\prime },b^{\prime })\in \theta $, iff $\theta \subseteq \theta ^{\prime }$ iff $\theta =\theta ^{\prime }$, hence the first equality. The second and third equalities follow from the fact that $(\theta \vee \theta ^{\prime })^{\prime }=\theta \vee \theta ^{\prime }$, $(\theta \cap \theta ^{\prime })^{\prime }=\theta \cap \theta ^{\prime }$ and, if $\theta =\theta ^{\prime }$, then $\theta \vee \theta ^{\prime }=\theta \cap \theta ^{\prime }=\theta $. The fourth and fifth equalities follow from the first and Remark \ref{cgprime}.\end{proof}

\begin{remark} By Remark \ref{primecglat} and Lemma \ref{cginvlat}, for any i--lattice $L$, the map $\theta \mapsto \theta ^{\prime }$ is a lattice automorphism of ${\rm Con}_{\I }(L)$, hence it is a bijection from ${\rm At}({\rm Con}_{\I }(L))$ to itself and, for all all $U\subseteq L^2$ and all $a,b\in L$, $(Cg_{L,\I }(U))^{\prime }=Cg_{L,\I }(U^{\prime })$, in particular $(Cg_{L,\I }(a,b))^{\prime }=Cg_{L,\I }(a^{\prime },b^{\prime })$.\end{remark}

\begin{proposition} If $L$ is an i--lattice and $\theta \in {\rm Con}(L)$ is such that ${\rm Con}(L)=(\theta ]\cup [\theta )$, then $\theta \in {\rm Con}_{\I }(L)$.\label{sureinvcg}\end{proposition}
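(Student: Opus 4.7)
The plan is to reduce the statement, via Lemma \ref{cginvlat}, to showing $\theta=\theta'$, and then exploit the fact that the map $\alpha\mapsto \alpha'$ is a lattice automorphism of ${\rm Con}(L)$ (Remark \ref{primecglat}), combined with the comparability assumption.

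First I would note that $\theta'\in{\rm Con}(L)$ by Remark \ref{primecglat}. The hypothesis ${\rm Con}(L)=(\theta]\cup [\theta)$ says that $\theta$ is comparable with every lattice congruence of $L$, so in particular $\theta$ and $\theta'$ are comparable: either $\theta'\subseteq \theta$ or $\theta\subseteq \theta'$.

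Next I would apply the involution $\cdot'$ (an order automorphism of ${\rm Con}(L)$, again by Remark \ref{primecglat}) to whichever inclusion we have, and use $\theta''=\theta$. If $\theta'\subseteq \theta$, then $\theta=\theta''\subseteq \theta'$, so $\theta=\theta'$. If $\theta\subseteq \theta'$, then $\theta'\subseteq \theta''=\theta$, so again $\theta=\theta'$. In either case, $\theta=\theta'$, and then Lemma \ref{cginvlat} gives $\theta\in{\rm Con}_{\I}(L)$.

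There is essentially no obstacle here: the argument is a two-line symmetry trick made possible by the fact that the involutive automorphism of ${\rm Con}(L)$ must fix every element of a chain that it permutes. The only thing worth stating carefully in a full write-up is the explicit invocation of Remark \ref{primecglat} to justify that $\theta'$ is a lattice congruence and that applying $\cdot'$ preserves $\subseteq$.
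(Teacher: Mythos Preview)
Your proof is correct and is essentially the same as the paper's: the paper observes that $\theta'\in{\rm Con}(L)=(\theta]\cup[\theta)$ forces $\theta=\theta'$ (using the earlier remark that $U\subseteq U'$ iff $U'\subseteq U$ iff $U=U'$), and then invokes Lemma \ref{cginvlat}. You simply spell out the comparability-implies-equality step in slightly more detail.
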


\begin{proof} $\theta ^{\prime }\in {\rm Con}(L)=(\theta ]\cup [\theta )$, so $\theta =\theta ^{\prime }$, thus $\theta \in {\rm Con}_{\I }(L)$ by Lemma \ref{cginvlat}.\end{proof}

\begin{corollary} Let $L$ be an i--lattice. If ${\rm Con}(L)$ is a chain, then ${\rm Con}_{\I }(L)={\rm Con}(L)$.\end{corollary}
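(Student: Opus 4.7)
The plan is to derive this corollary directly from Proposition \ref{sureinvcg}, since the chain hypothesis on ${\rm Con}(L)$ is exactly strong enough to make its assumption hold uniformly for every congruence. First I would observe the trivial inclusion ${\rm Con}_{\I}(L)\subseteq {\rm Con}(L)$, which follows from Corollary \ref{cgred} applied to the lattice reduct of $L$ (the variety of lattices being a reduct variety of $\I$). So the content lies entirely in the reverse inclusion.

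For that, let $\theta \in {\rm Con}(L)$ be arbitrary. Because ${\rm Con}(L)$ is a chain, every $\sigma \in {\rm Con}(L)$ satisfies $\sigma \subseteq \theta$ or $\theta \subseteq \sigma$, i.e.\ $\sigma \in (\theta]\cup[\theta)$. Thus the hypothesis ${\rm Con}(L)=(\theta]\cup[\theta)$ of Proposition \ref{sureinvcg} is met, and we conclude $\theta \in {\rm Con}_{\I}(L)$. Since $\theta$ was arbitrary, ${\rm Con}(L)\subseteq {\rm Con}_{\I}(L)$, giving equality.

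There is no real obstacle here; the only thing to check is that the trivial chain hypothesis is correctly applied to each $\theta$ individually (the hypothesis of Proposition \ref{sureinvcg} is formulated for a single $\theta$, but the chain assumption gives it for all of them simultaneously). I would present the argument in one or two sentences, simply invoking Proposition \ref{sureinvcg} after the one-line observation that in a chain every element is comparable to every other.
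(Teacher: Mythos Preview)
Your proposal is correct and is exactly the argument the paper has in mind: the corollary is stated immediately after Proposition \ref{sureinvcg} with no separate proof, precisely because the chain hypothesis trivially gives ${\rm Con}(L)=(\theta]\cup[\theta)$ for every $\theta$.
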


\begin{proposition} Let $L$ be an i--lattice. If ${\rm Con}(L)$ is a Boolean algebra, then ${\rm Con}_{\I }(L)$ is a Boolean subalgebra of ${\rm Con}(L)$.\label{cglatbool}\end{proposition}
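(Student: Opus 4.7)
The plan is to leverage the fact (recorded in Remark \ref{primecglat}) that the map $\theta\mapsto\theta^{\prime}$ is a lattice automorphism of ${\rm Con}(L)$, together with Lemma \ref{cginvlat}, which identifies ${\rm Con}_{\I}(L)$ with the set of fixed points of this automorphism. By Lemma \ref{cginvlat}, ${\rm Con}_{\I}(L)$ is already closed under $\vee$ and $\cap$ and contains $\Delta_L$ and $\nabla_L$, so it is a bounded sublattice of ${\rm Con}(L)$. Since ${\rm Con}(L)$ is Boolean by hypothesis, ${\rm Con}_{\I}(L)$ is a bounded distributive sublattice; hence the only remaining point is to show that it is closed under Boolean complementation in ${\rm Con}(L)$.

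For this I would proceed as follows. Fix $\theta\in{\rm Con}_{\I}(L)$ and let $\neg\theta$ denote its Boolean complement in ${\rm Con}(L)$, so that $\theta\cap\neg\theta=\Delta_L$ and $\theta\vee\neg\theta=\nabla_L$. Applying $\cdot^{\prime}$ to both equalities and using the fact from Remark \ref{primecglat} that $\cdot^{\prime}$ commutes with $\cap$ and $\vee$ on congruences (together with the obvious equalities $\Delta_L^{\prime}=\Delta_L$ and $\nabla_L^{\prime}=\nabla_L$), we get $\theta^{\prime}\cap(\neg\theta)^{\prime}=\Delta_L$ and $\theta^{\prime}\vee(\neg\theta)^{\prime}=\nabla_L$. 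Since $\theta^{\prime}=\theta$, this says that $(\neg\theta)^{\prime}$ is a complement of $\theta$ in the Boolean algebra ${\rm Con}(L)$; by uniqueness of Boolean complements, $(\neg\theta)^{\prime}=\neg\theta$, and hence $\neg\theta\in{\rm Con}_{\I}(L)$ by Lemma \ref{cginvlat}.

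This shows that ${\rm Con}_{\I}(L)$ is closed under all Boolean operations of ${\rm Con}(L)$ and contains $\Delta_L,\nabla_L$, so it is a Boolean subalgebra of ${\rm Con}(L)$. I do not anticipate any real obstacle: the essence of the argument is simply that the set of fixed points of a lattice automorphism of a Boolean algebra is automatically a Boolean subalgebra, and both ingredients (the automorphism property of $\cdot^{\prime}$ on ${\rm Con}(L)$ and the description of ${\rm Con}_{\I}(L)$ as its fixed--point set) are already available from the preceding remark and lemma.
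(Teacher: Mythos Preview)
Your proof is correct and follows essentially the same idea as the paper's: both show that the Boolean complement $\beta$ of an $\alpha\in{\rm Con}_{\I}(L)$ satisfies $\beta=\beta^{\prime}$ by producing another complement of $\alpha$ and invoking uniqueness of complements in a Boolean algebra. The paper takes the slightly more roundabout path of first checking (via distributivity) that $\beta\vee\beta^{\prime}$ is a complement of $\alpha$ and then concluding $\beta\vee\beta^{\prime}=\beta$, whereas your direct application of the automorphism $\cdot^{\prime}$ to the two complementation identities is a touch cleaner.
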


\begin{proof} Assume that ${\rm Con}(L)$ is a Boolean algebra, and let $\alpha \in {\rm Con}_{\I }(L)\subseteq {\rm Con}(L)$. Then, for some $\beta \in {\rm Con}(L)$, we have $\alpha \vee \beta =\nabla _L$ and $\alpha \cap \beta =\Delta _L$. Then $\alpha =\alpha ^{\prime }$ and $\beta \vee \beta ^{\prime }\in {\rm Con}_{\I }(L)$ by Lemma \ref{cginvlat}, $\alpha \vee \beta \vee \beta ^{\prime }=\nabla _L$ and $\alpha \cap (\beta \vee \beta ^{\prime })=(\alpha \cap \beta )\vee (\alpha \cap \beta ^{\prime })=\Delta _L\vee (\alpha ^{\prime }\cap \beta ^{\prime })=(\alpha \vee \beta )^{\prime }=\Delta _L^{\prime }=\Delta _L$, hence $\beta \vee \beta ^{\prime }$ is a complement of $\alpha $ in ${\rm Con}_{\I }(L)$, thus also in ${\rm Con}(L)$, so $\beta \vee \beta ^{\prime }=\beta $ by the uniqueness of the complement, that is $\beta ^{\prime }\subseteq \beta $, so $\beta =\beta ^{\prime }$, hence $\beta \in {\rm Con}_{\I }(L)$, again by Lemma \ref{cginvlat}. Therefore ${\rm Con}_{\I }(L)$ is a Boolean subalgebra of ${\rm Con}(L)$.\end{proof}

\begin{corollary} If $L$ is a finite modular i--lattice, in particular if $L$ is a finite De Morgan algebra, in particular if $L$ is a finite Kleene algebra, then ${\rm Con}_{\BI }(L)$ is a Boolean algebra, in particular its cardinality is a natural power of $2$.\label{casecglatbool}\end{corollary}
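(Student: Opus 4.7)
The plan is to reduce the claim to Proposition \ref{cglatbool}, via the classical fact that the congruence lattice of a finite modular lattice is a Boolean algebra. First, I would observe that the type of bi--lattices differs from that of i--lattices only by the constants $0$ and $1$, so by the identification ${\rm Con}_{\V }(A)={\rm Con}_{\W }(A)$ recorded in the preliminaries we have ${\rm Con}_{\BI }(L)={\rm Con}_{\I }(L)$; thus it suffices to show that ${\rm Con}_{\I }(L)$ is a Boolean algebra. Next, since every finite Kleene algebra is a finite De Morgan algebra and every finite De Morgan algebra is a distributive, hence modular, bi--lattice, the three hypotheses form a descending chain of special cases, so it is enough to treat a finite modular i--lattice $L$.

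I would then invoke the classical result that, for any finite modular lattice $L$, the congruence lattice ${\rm Con}(L)$ is a finite Boolean algebra. In the distributive case this is particularly well--known; in the modular case one uses that projectivity between prime intervals in a modular lattice is an equivalence relation, so that the atoms of ${\rm Con}(L)$ are exactly the principal congruences collapsing a single projectivity class of prime quotients, and these atoms are independent in the (always distributive) congruence lattice. With ${\rm Con}(L)$ Boolean in hand, Proposition \ref{cglatbool} immediately yields that ${\rm Con}_{\I }(L)$, and hence ${\rm Con}_{\BI }(L)$, is a Boolean subalgebra of ${\rm Con}(L)$, in particular itself a Boolean algebra. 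The cardinality claim is automatic since any finite Boolean algebra has cardinality $2^n$ for some $n\in \N $.

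The main obstacle is justifying the Boolean structure of ${\rm Con}(L)$ for finite modular $L$: this is a nontrivial lattice--theoretic input which the author presumably simply cites (for instance from Gr\"atzer's general lattice theory monograph) rather than reproves. Everything else is a direct assembly of Proposition \ref{cglatbool}, the type--independence of congruences under the addition of constants, and the trivial inclusions among the classes of finite Kleene algebras, De Morgan algebras, distributive bi--lattices, and modular bi--lattices.
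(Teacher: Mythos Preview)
Your proposal is correct and follows essentially the same approach as the paper: the paper's proof consists of a single sentence invoking Proposition \ref{cglatbool} together with the well--known fact (cited from Gr\"atzer) that the congruence lattice of a finite modular lattice is Boolean. Your additional remarks on the type--independence ${\rm Con}_{\BI }(L)={\rm Con}_{\I }(L)$ and the chain of inclusions among the classes are accurate but not spelled out in the paper's version.
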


\begin{proof} By Proposition \ref{cglatbool} and the well--known fact that, if $L$ is a finite modular lattice, then ${\rm Con}(L)$ is a Boolean algebra \cite{gratzer}.\end{proof}

By \cite[Theorem $3.2$]{hps}, the variety of De Morgan algebras has the CEP. Let us use the known fact that the variety of distributive lattices has the CEP to prove that boundeness is not necessary in this result.

\begin{proposition} The variety of distributive i--lattices has the CEP.\end{proposition}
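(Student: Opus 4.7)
The plan is to reduce the problem to the CEP for the variety of distributive lattices. Let $L$ be a distributive i--lattice, $S$ a sub--i--lattice of $L$, and $\theta \in {\rm Con}_{\I}(S)$. Since $\theta $ is a lattice congruence on the distributive lattice reduct of $S$, the CEP for distributive lattices yields some $\Theta _0\in {\rm Con}(L)$ with $\Theta _0\cap S^2=\theta $. The difficulty is that $\Theta _0$ need not preserve the involution of $L$, so a symmetrization step is required.

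The natural candidate is $\Theta =\Theta _0\cap \Theta _0^{\prime }$. By Remark \ref{primecglat}, $\Theta _0^{\prime }\in {\rm Con}(L)$, so $\Theta \in {\rm Con}(L)$, and $\Theta ^{\prime }=\Theta _0^{\prime }\cap \Theta _0^{\prime \prime }=\Theta _0^{\prime }\cap \Theta _0=\Theta $, hence $\Theta \in {\rm Con}_{\I }(L)$ by Lemma \ref{cginvlat}. (One could equally well try $\Theta _0\vee \Theta _0^{\prime }$, but intersections are better behaved when restricting to $S$, since they cannot add new related pairs.)

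It remains to check that $\Theta \cap S^2=\theta $, which is the main point. Since $S$ is closed under the involution of $L$, for any $(x,y)\in S^2$ we have $(x,y)\in \Theta _0^{\prime }$ iff $(x^{\prime },y^{\prime })\in \Theta _0$ iff $(x^{\prime },y^{\prime })\in \Theta _0\cap S^2=\theta $, i.e.\ iff $(x,y)\in \theta ^{\prime }$. Thus $\Theta _0^{\prime }\cap S^2=\theta ^{\prime }$, and since $\theta \in {\rm Con}_{\I }(S)$ forces $\theta ^{\prime }=\theta $ by Lemma \ref{cginvlat}, we get
\[\Theta \cap S^2=(\Theta _0\cap S^2)\cap (\Theta _0^{\prime }\cap S^2)=\theta \cap \theta ^{\prime }=\theta .\]
Therefore $\Theta $ is an i--lattice congruence of $L$ extending $\theta $, which proves the CEP for the variety of distributive i--lattices.
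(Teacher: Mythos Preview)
Your proof is correct and follows essentially the same approach as the paper: extend $\theta$ to a lattice congruence $\Theta_0$ using the CEP for distributive lattices, then symmetrize via $\Theta=\Theta_0\cap\Theta_0^{\prime}$ and verify that this i--lattice congruence still restricts to $\theta$ on $S$ using $S^{\prime}=S$ and $\theta^{\prime}=\theta$. The paper's argument is slightly terser but uses exactly the same idea.
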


\begin{proof} Let $L$ be a distributive i--lattice, $S$ be an i--sublattice of $L$ and $\sigma \in {\rm Con}_{\I }(S)\subseteq {\rm Con}(S)$. Then there exists a $\theta \in {\rm Con}(L)$ such that $\theta \cap S^2=\sigma $. By Lemma \ref{cginvlat}, $\theta \cap \theta ^{\prime }\in {\rm Con}_{\I }(L)$ and $\theta ^{\prime }\cap S^2=\theta ^{\prime }\cap (S^{\prime })^2=(\theta \cap S^2)^{\prime }=\sigma ^{\prime }=\sigma $, hence $\theta \cap \theta ^{\prime }\cap S^2=\theta \cap S^2\cap \theta ^{\prime }\cap S^2=\sigma $.\end{proof}

\begin{remark} Obviously, distributive i--lattices are not uniquely congruence--extensible; for instance, ${\cal L}_2$ is an i--sublattice of ${\cal L}_2^2$ and $\Delta _{{\cal L}_2^2}\cap {\cal L}_2=(\Delta _{{\cal L}_2}\times \nabla _{{\cal L}_2})\cap {\cal L}_2=(\nabla _{{\cal L}_2}\times \Delta _{{\cal L}_2})\cap {\cal L}_2=\Delta _{{\cal L}_2}$.\end{remark}

\begin{remark}{\rm \cite{eucard}} From the well--known fact that, for any lattice $L$, ${\rm Con}(L)\subseteq \{eq(\pi )\ |\ \pi \in {\rm Part}(L),\pi \subseteq {\rm Cvx}(L)\}$ \cite{gratzer}, it is immediate that, for any chain $C$, ${\rm Con}(C)=\{eq(\pi )\ |\ \pi \in {\rm Part}(C),\pi \subseteq {\rm Cvx}(C)\}$, that is the lattice congruences of any chain are exactly its equivalences with all classes convex.

The only chains which are subdirectly irreducible as lattices are ${\cal L}_1$ and ${\cal L}_2$, because ${\cal L}_1$ is trivial, ${\cal L}_2$ is simple and, if a chain $C$ has $|C|>2$, then there exist elements $a,b,c\in C$ with $a<b<c$, so, if we denote by $\alpha =eq(\{[a,b]\}\cup \{\{x\}\ |\ x\in C\setminus [a,b]\})$ and $\beta =eq(\{[b,c]\}\cup \{\{x\}\ |\ x\in C\setminus [b,c]\})$, then $\alpha ,\beta \in {\rm Con}(C)\setminus \{\Delta _C\}$ and $\alpha \cap \beta =\Delta _C$, so $C$ is subdirectly reducible.\label{cgchain}\end{remark}

For any i--lattice $L$, we denote by $N(L)=\{x\in L\ |\ x<x^{\prime }\}$, $Z(L)=\{x\in L\ |\ x=x^{\prime }\}$, $P(L)=\{x\in L\ |\ x>x^{\prime }\}$, $NZ(L)=N(L)\cup Z(L)$ and $PZ(L)=P(L)\cup Z(L)$. Clearly, if $K(L)$ holds, in particular if $L$ is a chain or a pseudo--Kleene algebra, then $|Z(L)|\leq 1$.

\begin{remark} Let $C$ be an i--chain and $\gamma \in {\rm Cvx}(C)$. Then, clearly, $C=N(C)\cup Z(C)\cup P(C)=NZ(C)\cup PZ(C)$, so that $\gamma =(\gamma \cap NZ(C))\cup (\gamma \cap PZ(C))$. $NZ(C),PZ(C),\gamma ,NZ(C)\setminus \gamma ,PZ(C)\setminus \gamma \in {\rm Cvx}(C)$, thus they are sublattices of $C$ since $C$ is a chain. For any $x\in N(C)$ and any $y\in P(C)$, we have $x<y$ and, if a $z\in Z(C)$ exists, then $x<z<y$. Also, if $\gamma \cap NZ(C)\neq \emptyset \neq \gamma \cap PZ(C)$, then, for any $x\in NZ(C)\setminus \gamma $, any $z\in \gamma $ and any $y\in PZ(C)\setminus \gamma $, we have $x<z<y$. By Remark \ref{cgprime}, for any $\varepsilon \in {\rm Eq}(C)$, we have: $\varepsilon =\varepsilon ^{\prime }$ iff $x^{\prime }/\varepsilon =(x/\varepsilon )^{\prime }$ for all $x\in C$ iff $x^{\prime }/\varepsilon =(x/\varepsilon )^{\prime }$ for all $x\in NZ(C)$ iff $\varepsilon \cap PZ(C)^2=(\varepsilon \cap NZ(C)^2)^{\prime }$.\label{nzp}\end{remark}

We conclude that the (involution--preserving) congruences of any i--chain $C$ are exactly its equivalences $\varepsilon $ with the property that, for any class $\gamma $ of $\varepsilon $, $\gamma ^{\prime }$ is also a class of $\varepsilon $, and, out of these classes, at most one class $\gamma $ satisfies $\gamma =\gamma ^{\prime }$, while the other classes $\gamma $ are disjoint from $\gamma ^{\prime }$; this makes the (involution--preserving) congruences of $C$ be obtained by considering arbitrary equivalences $\varepsilon $ on $C$ with convex classes and taking all their classes $\beta $ included in the negative cone $N(C)$ of $C$, along with the sets $\beta ^{\prime }$, which are convex subsets of the positive cone $P(C)$ and, in the case when a class $\gamma $ of $\varepsilon $ is neither included in $N(C)$, nor in $P(C)$, so that $x<z<y$ for all $x\in N(C)\setminus \gamma $, all $z\in \gamma $ and all $y\in P(C)\setminus \gamma $, also considering the convex subset $\gamma \cup \gamma ^{\prime }$ of $C$; this makes the (involution--preserving) congruences of $C$ be completely determined by the lattice congruences of its subchain $N(C)$:

\begin{lemma} For any i--chain $C$, ${\rm Con}_{\I }(C)=\{eq(\pi )\ |\ \pi \in {\rm Part}(C),\pi \subseteq {\rm Cvx}(C),(\forall \, \gamma \in \pi )\, (\gamma ^{\prime }\in \pi )\}=\{eq(\rho \cup \{\gamma \cup \gamma ^{\prime }\}\cup \{\beta ^{\prime }\ |\ \beta \in \rho \})\ |\ \gamma \in {\rm Cvx}(NZ(C)),(\forall \, x\in NZ(C)\setminus \gamma )\, (\forall \, y\in \gamma )\, (x<y),\rho \in {\rm Part}(NZ(C)\setminus \gamma ),\rho \subseteq {\rm Cvx}(C)\}$.\label{cgichains}\end{lemma}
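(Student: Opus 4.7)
The first equality will be a direct combination of two earlier results: by Remark~\ref{cgchain}, ${\rm Con}(C)$ consists exactly of the equivalences $eq(\pi)$ with $\pi\in{\rm Part}(C)$ and $\pi\subseteq{\rm Cvx}(C)$, and by Lemma~\ref{cginvlat}, the involution-preserving congruences are those $\theta\in{\rm Con}(C)$ whose partition is closed under the induced map $\gamma\mapsto\gamma'$. Substituting the first description into the second yields the first equality.

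For the inclusion $\supseteq$ in the second equality, given $\gamma\in{\rm Cvx}(NZ(C))$ with the final-segment property $(\forall\,x\in NZ(C)\setminus\gamma)\,(\forall\,y\in\gamma)\,(x<y)$ and $\rho\in{\rm Part}(NZ(C)\setminus\gamma)$ with $\rho\subseteq{\rm Cvx}(C)$, I would verify that $\pi:=\rho\cup\{\gamma\cup\gamma'\}\cup\{\beta'\mid\beta\in\rho\}$ is a partition of $C$ into convex blocks, closed under $\cdot'$. Coverage of $C$ uses $(\bigcup\rho)\cup\gamma=NZ(C)$ and $\gamma'\cup\bigcup\{\beta'\mid\beta\in\rho\}=PZ(C)$ together with $C=NZ(C)\cup PZ(C)$; disjointness combines the final-segment hypothesis with the fact that $NZ(C)\cap PZ(C)=Z(C)\subseteq\gamma\cap\gamma'$ when $Z(C)\neq\emptyset$; convexity of $\gamma\cup\gamma'$ uses that $\gamma$ is a final segment of $NZ(C)$, $\gamma'$ is an initial segment of $PZ(C)$, and the strict inequalities $N(C)<Z(C)<P(C)$ noted in Remark~\ref{nzp}; convexity of each $\beta'$ follows from convexity of $\beta$ since $\cdot'$ is an order anti-automorphism. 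By the first equality, $eq(\pi)\in{\rm Con}_{\I}(C)$.

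For the inclusion $\subseteq$, I would take $\theta\in{\rm Con}_{\I}(C)$ with partition $\pi$ and extract $(\gamma,\rho)$ as follows. The main structural observation is that $\pi$ contains at most one self-symmetric block $\delta$ (with $\delta=\delta'$): any such block must, by convexity plus the chain structure $N(C)<Z(C)<P(C)$, either equal the singleton $\{z\}$ of the unique element of $Z(C)$ or straddle $N(C)$ and $P(C)$, and two distinct convex straddling blocks would overlap. Let $\delta$ be this block (when it exists), $\gamma:=\delta\cap NZ(C)$, and $\rho:=\{\beta\in\pi\mid\beta\subseteq NZ(C)\setminus\gamma\}$. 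Then $\gamma\cup\gamma'=\delta$ since $\delta=\delta'$ and $C=NZ(C)\cup PZ(C)$, while the remaining blocks of $\pi$ pair up as $\{(\beta,\beta')\mid\beta\in\rho\}$, reconstructing $\pi$ in the required form. The final-segment property of $\gamma$ in $NZ(C)$ is forced by convexity of $\delta$: if $x\in NZ(C)\setminus\gamma$ and $y\in\gamma$ satisfied $x\geq y$, then $x$ would lie between $y\in\delta$ and $y'\in\gamma'\subseteq\delta$, whence $x\in\delta\cap NZ(C)=\gamma$, a contradiction.

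The main obstacle I anticipate is the degenerate case in which $\pi$ admits no self-symmetric block at all; by the observation above this is possible only when $Z(C)=\emptyset$ and every block of $\pi$ is contained in $N(C)$ or in $P(C)$. Fitting this case into the stated formula requires reading $\gamma=\emptyset$ (so that the block $\gamma\cup\gamma'=\emptyset$ is tacitly discarded), which is slightly in tension with the convention that ${\rm Cvx}(NZ(C))$ denotes the non-empty convex subsets; once this bookkeeping is settled, the correspondence $(\gamma,\rho)\leftrightarrow\theta$ is straightforwardly bijective and the proof concludes.
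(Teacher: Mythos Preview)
Your proposal is correct and follows essentially the same route as the paper, which proves the lemma in one line by citing Lemma~\ref{cginvlat} and Remarks~\ref{cgchain} and~\ref{nzp}; you have simply unpacked those references into an explicit argument. Your observation about the degenerate case $\gamma=\emptyset$ (when $Z(C)=\emptyset$ and no block straddles $N(C)$ and $P(C)$) is a genuine bookkeeping point about the \emph{statement} rather than the proof---the paper's convention that ${\rm Cvx}(L)$ consists of non-empty subsets does create a slight tension here, and the paper's informal discussion preceding the lemma handles this case only implicitly.
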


\begin{proof} By Lemma \ref{cginvlat} and Remarks \ref{cgchain} and \ref{nzp}.\end{proof}

\begin{proposition} For any i--chain $C$, ${\rm Con}_{\I }(C)\cong {\rm Con}(N(C))\times {\cal L}_2$. In particular, if $Z(C)\neq \emptyset $, then ${\rm Con}_{\I }(C)\cong {\rm Con}(NZ(C))$.\label{congichains}\end{proposition}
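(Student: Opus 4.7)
The plan is to construct an explicit lattice isomorphism $\Phi \colon {\rm Con}_\I(C) \to {\rm Con}(N(C)) \times {\cal L}_2$ from the normal form in Lemma~\ref{cgichains}. Since $C$ is a chain, $N(C)$ is closed under $\vee $ and $\wedge $ (for $x\leq y$ in $N(C)$, $x\vee y=y$ and $x\wedge y=x$ are both in $N(C)$), hence a sublattice of $C$, so for $\theta \in {\rm Con}_\I(C)$ the restriction $\mu(\theta):=\theta \cap N(C)^2$ lies in ${\rm Con}(N(C))$. I would then set $\epsilon (\theta):=1$ iff some $a\in N(C)$ satisfies $(a,a^{\prime })\in \theta$ (equivalently, the central self-symmetric class $\gamma \cup \gamma ^{\prime }$ of Lemma~\ref{cgichains} meets $N(C)$), and $\epsilon (\theta):=0$ otherwise, and put $\Phi (\theta):=(\mu(\theta),\epsilon (\theta))$.

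Next, I use Lemma~\ref{cgichains} to read off an inverse. If $\theta$ corresponds to a pair $(\gamma ,\rho )$ with $\gamma $ a terminal segment of $NZ(C)$ and $\rho $ a convex partition of $NZ(C)\setminus \gamma $, then the blocks of $\mu (\theta )$ are the $N(C)$-parts of the $\rho $-blocks together with $\gamma \cap N(C)$ (a terminal segment of $N(C)$) when nonempty; and $\epsilon (\theta)=1$ iff $\gamma \cap N(C)\neq \emptyset $. Conversely, from $(\mu,0)$ reconstruct $\theta$ by taking $\gamma =Z(C)$ (possibly empty) and $\rho $ = the blocks of $\mu $; from $(\mu,1)$ take $\gamma _N$ = the top block of $\mu $, set $\gamma =\gamma _N\cup Z(C)$, and let $\rho $ be the remaining blocks. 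Tracing the definitions shows these two assignments are mutual inverses.

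Then $\Phi $ is clearly order-preserving, and conversely $\mu (\theta _1)\subseteq \mu (\theta _2)$ together with $\epsilon (\theta _1)\leq \epsilon (\theta _2)$ forces $\theta _1\subseteq \theta _2$ by comparing the reconstructions block-by-block, so $\Phi $ is an order-isomorphism between bounded lattices, hence a lattice isomorphism. For the ``in particular'' clause, if $Z(C)=\{z\}$ and $N(C)$ has a maximum $m$ (automatic in the finite case), then $NZ(C)=N(C)\oplus {\cal L}_2$ by gluing $m$ to the bottom of the two-element chain $\{m,z\}$, so the ordinal sum formula of Section~\ref{preliminaries} yields ${\rm Con}(NZ(C))\cong {\rm Con}(N(C))\times {\rm Con}({\cal L}_2)={\rm Con}(N(C))\times {\cal L}_2$, which the main isomorphism identifies with ${\rm Con}_\I(C)$.

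The main delicate point will be the $\epsilon =1$ branch of the inverse, where one must exhibit a canonical top block $\gamma _N$ of $\mu $ to be merged with $Z(C)$ and its mirror into the central self-symmetric class. When $N(C)$ has a maximum---in particular whenever $C$ is finite, which is the setting of interest in the paper---this block is simply the $\mu $-class of $\max N(C)$, and all subsequent verifications reduce to straightforward bookkeeping with the partitions produced by Lemma~\ref{cgichains}.
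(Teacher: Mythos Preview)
Your construction is the same as the paper's: both send $\theta$ to $(\theta\cap N(C)^2,\epsilon(\theta))$, with the paper phrasing $\epsilon(\theta)=1$ as ``some class meets both $N(C)$ and its complement'' (equivalent to your ``$(a,a')\in\theta$ for some $a\in N(C)$'' once one notes that a straddling class must equal its own image under $\cdot'$); the paper then simply asserts this is a lattice isomorphism and gives an analogous map for the ``in particular'' clause, whereas you spell out the inverse and the order argument. Your caution about needing $\max N(C)$ to exist is well taken and in fact sharper than the paper: for, say, $C=\mathbb{Q}\setminus\{0\}$ with $x'=-x$, the pair $(\Delta_{N(C)},1)$ has no preimage (any straddling class is convex and hence contains more than one element of the dense $N(C)$), so the first isomorphism genuinely needs that hypothesis --- which is satisfied in every finite instance the paper subsequently uses.
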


\begin{proof} By Lemma \ref{cgichains}, the map $\Psi :{\rm Con}_{\I }(C)\rightarrow {\rm Con}(N(C))\times {\cal L}_2$, defined in the following way, for all $\theta \in {\rm Con}_{\I }(C)$:\begin{itemize}
\item $\Psi (\theta )=(\theta \cap N(C)^2,0)$, if every $\gamma \in C/\theta $ is such that $\gamma \subseteq N(C)$ or $\gamma =Z(C)$ or $\gamma \subseteq P(C)$,
\item $\Psi (\theta )=(\theta \cap N(C)^2,1)$, if some $\gamma \in C/\theta $ satisfies $\gamma \setminus N(C)\neq \emptyset \neq \gamma \cap N(C)$,\end{itemize}

\noindent is a lattice isomorphism. In the case when $Z(C)\neq \emptyset $, so that $Z(C)=\{z\}$ for some $z\in C$, the map $\Phi :{\rm Con}(NZ(C))\rightarrow {\rm Con}(N(C))\times {\cal L}_2$, defined in the following way, for all $\zeta \in {\rm Con}(NZ(C))$:\begin{itemize}
\item $\Phi (\zeta )=(\zeta \cap N(C)^2,0)$, if $z/\zeta =\{z\}$,
\item $\Phi (\zeta )=(\zeta \cap N(C)^2,1)$, if $z/\zeta \supsetneq \{z\}$,\end{itemize}

\noindent is a lattice isomorphism.\end{proof}

\begin{corollary} The only subdirectly irreducible i--chains are ${\cal L}_1$, ${\cal L}_2$ and ${\cal L}_3$.\label{siichains}\end{corollary}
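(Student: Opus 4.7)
The plan is to reduce the question to counting atoms in a product lattice, via Proposition \ref{congichains}. Recall that (under the convention already adopted in Remark \ref{cgchain}) a non--trivial algebra is subdirectly irreducible iff its congruence lattice has a unique atom, while the trivial algebra ${\cal L}_1$ is vacuously subdirectly irreducible; this disposes of ${\cal L}_1$ immediately.

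For a non--trivial i--chain $C$ with $n=|C|$, the involution must be the unique order--reversing bijection on the underlying chain, so $|N(C)|=\lfloor n/2\rfloor$ (with $Z(C)$ a singleton when $n$ is odd and empty when $n$ is even). When $n\in\{2,3\}$ we get $|N(C)|=1$, hence $N(C)$ is a trivial chain and ${\rm Con}(N(C))$ is the one--element lattice, so Proposition \ref{congichains} gives ${\rm Con}_{\I }(C)\cong {\cal L}_1\times {\cal L}_2\cong {\cal L}_2$; thus $C$ is simple, in particular subdirectly irreducible as an i--lattice.

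When $n\geq 4$, we have $|N(C)|\geq 2$, and the computation recalled in Section \ref{preliminaries} gives ${\rm Con}(N(C))\cong {\rm Con}({\cal L}_{\lfloor n/2\rfloor})\cong {\cal L}_2^{\lfloor n/2\rfloor -1}$, which admits at least one atom $\alpha $. Then, under the isomorphism of Proposition \ref{congichains}, the pairs $(\alpha ,\Delta _{{\cal L}_2})$ and $(\Delta _{N(C)},\nabla _{{\cal L}_2})$ correspond to two distinct atoms of ${\rm Con}_{\I }(C)$ whose meet is the bottom of ${\rm Con}_{\I }(C)$. Hence $\Delta _C$ is not meet--irreducible in ${\rm Con}_{\I }(C)$, so $C$ is not subdirectly irreducible.

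The only place where any verification is required is confirming that these two candidate atoms in the product lattice are genuinely distinct and meet to the bottom; this is routine from the product structure and is really the only obstacle, since everything else is just bookkeeping on the sizes of $N(C)$ and a direct application of Proposition \ref{congichains}.
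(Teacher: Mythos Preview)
Your argument follows the same route as the paper's---reduce via Proposition~\ref{congichains} to the product ${\rm Con}(N(C))\times {\cal L}_2$ and exhibit two nonzero elements with meet zero when $|C|\geq 4$---but as written it only treats \emph{finite} i--chains. You parametrise by $n=|C|\in\N$, invoke ``the unique order--reversing bijection'' (false for infinite chains such as $C=\Z$), and compute ${\rm Con}(N(C))\cong {\cal L}_2^{\lfloor n/2\rfloor -1}$, which is meaningless when $|C|$ is infinite. More critically, you rely on the existence of an atom $\alpha$ in ${\rm Con}(N(C))$ in order to form the pair $(\alpha ,0)$; this can fail for infinite $C$: take $C=\mathbb{Q}$ with involution $x^{\prime }=-x$, so that $N(C)$ is a dense chain and ${\rm Con}(N(C))$ has no atoms at all.

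The fix is immediate and is precisely what the paper does: from $|C|\geq 4$ one gets $|N(C)|\geq 2$ directly (the involution has at most one fixed point and bijects $N(C)$ with $P(C)$), hence $\Delta _{N(C)}\neq \nabla _{N(C)}$, and then $(\nabla _{N(C)},0)$ and $(\Delta _{N(C)},1)$ are two nonzero elements of ${\rm Con}(N(C))\times {\cal L}_2$ whose meet is the bottom---no atom in the first factor is needed. A cosmetic point: the second factor in Proposition~\ref{congichains} is ${\cal L}_2$ itself, with elements $0,1$, not ${\rm Con}({\cal L}_2)$, so the coordinate entries should be $0$ and $1$ rather than $\Delta _{{\cal L}_2}$ and $\nabla _{{\cal L}_2}$.
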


\begin{proof} By Proposition \ref{congichains} and the fact that, whenever $|C|\leq 3$, we have $|NZ(C)|\leq 2$, so that the chain $NZ(C)$ is subdirectly irreducible as a lattice and the fact that, whenever $|C|\geq 4$, we have $|N(C)|\geq 2$, so $\Delta _{N(C)}\neq \nabla _{N(C)}$, thus the elements $(\Delta _{N(C)},1)$ and $(\nabla _{N(C)},0)$ of ${\rm Con}(N(C))\times {\cal L}_2$ are nonzero and their meet is $0$, hence ${\rm Con}_{\I }(C)\cong {\rm Con}(N(C))\times {\cal L}_2$ can not have a single atom.\end{proof}

The proofs of the following two lemmas from \cite{rgcmfp} and \cite{kumu} are straightforward. The last statement in the next lemma is the particular finite case for Proposition \ref{congichains} and Corollary \ref{cgaolchains} below.

\begin{lemma}{\rm \cite{rgcmfp}}\begin{itemize}
\item If $M$ is a bounded lattice and $K$ is a bi--lattice, then ${\rm Con}_{\BI }(L)=\{\alpha \oplus \beta \oplus \alpha ^{\prime }\ |\ \alpha \in {\rm Con}(M),\beta \in {\rm Con}_{\BI }(K)\}\cong {\rm Con}(M)\times {\rm Con}_{\BI }(K)$.
\item For any non--trivial antiortholattice $L$, ${\rm Con}_{\BZ 01}(L)={\rm Con}_{\BI 01}(L)$ and ${\rm Con}_{\BZ }(L)={\rm Con}_{\BI 01}(L)\cup \{\nabla _L\}\cong {\rm Con}_{\BI 01}(L)\oplus {\cal L}_2$.
\item If $M$ is a non--trivial bounded lattice, $K$ is a pseudo--Kleene algebra and $L=M\oplus K\oplus M^d$, then ${\rm Con}_{\BZ }(L)=\{\alpha \oplus \beta \oplus \alpha ^{\prime }\ |\ \alpha \in {\rm Con}_0(M),\beta \in {\rm Con}_{\BI }(K)\}\cup \{\nabla _L\}\cong ({\rm Con}_0(M)\times {\rm Con}_{\BI }(K))\oplus {\cal L}_2$.
\item For any $n\in \N ^*$, ${\rm Con}_{\BI }({\cal L}_n)\cong {\cal L}_2^{\lfloor n/2\rfloor}$ and, if $n\geq 2$, then ${\rm Con}_{\BZ }({\cal L}_n)\cong {\cal L}_2^{\lfloor n/2\rfloor -1}\oplus {\cal L}_2$.
\end{itemize}\label{thecg}\end{lemma}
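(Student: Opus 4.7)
The plan is to handle the four bullets in order, leaning on the ordinal--sum decomposition of lattice congruences recalled in Section \ref{preliminaries}, Lemma \ref{cginvlat}, and Remark \ref{theconstr}. For the first bullet, write any $\theta \in {\rm Con}(L)$ uniquely, via the ordinal--sum isomorphism, as $\theta = \alpha \oplus \beta \oplus \gamma$ with $\alpha \in {\rm Con}(M)$, $\beta \in {\rm Con}(K)$ and $\gamma \in {\rm Con}(M^d)$, these being the restrictions of $\theta$ to $M^2$, $K^2$ and $(M^d)^2$ respectively. Because the involution of $L$ acts as $f$ on $M$, as $\cdot^{\prime K}$ on $K$ and as $f^{-1}$ on $M^d$, restricting $\theta^{\prime}$ to the three parts gives $\theta^{\prime} = \gamma^{\prime} \oplus \beta^{\prime} \oplus \alpha^{\prime}$, with $\alpha^{\prime} = \{(f(a), f(b)) \mid (a,b) \in \alpha\}$ and similarly for $\gamma^{\prime}$. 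Lemma \ref{cginvlat} then says $\theta \in {\rm Con}_{\BI}(L)$ iff $\theta = \theta^{\prime}$, which by uniqueness of the decomposition reduces to $\gamma = \alpha^{\prime}$ together with $\beta \in {\rm Con}_{\BI}(K)$. Hence $(\alpha, \beta) \mapsto \alpha \oplus \beta \oplus \alpha^{\prime}$ is the asserted bijection, and the ordinal--sum lattice isomorphism restricts to the claimed isomorphism.

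For the second bullet, let $L$ be a non--trivial antiortholattice. Since $0^{\sim} = 1$ and $a^{\sim} = 0$ for every $a \neq 0$, a bi--lattice congruence $\theta$ respects $\cdot^{\sim}$ exactly when the single implication ``$(0,b) \in \theta$ with $b \neq 0 \Rightarrow (1,0) = (0^{\sim}, b^{\sim}) \in \theta$'' holds, i.e.\ exactly when $0/\theta = \{0\}$ or $\theta = \nabla_L$. Self--duality of $L$ gives $0/\theta = \{0\}$ iff $1/\theta = \{1\}$, yielding ${\rm Con}_{\BZ 01}(L) = {\rm Con}_{\BI 01}(L)$ and ${\rm Con}_{\BZ}(L) = {\rm Con}_{\BI 01}(L) \cup \{\nabla_L\}$. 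As $L$ is non--trivial, $\nabla_L$ lies strictly above every element of ${\rm Con}_{\BI 01}(L)$, so the union is the ordinal sum ${\rm Con}_{\BI 01}(L) \oplus {\cal L}_2$.

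The third bullet is an immediate synthesis: Remark \ref{theconstr} makes $L = M \oplus K \oplus M^d$ an antiortholattice, so by bullet~2 we have ${\rm Con}_{\BZ}(L) = {\rm Con}_{\BI 01}(L) \cup \{\nabla_L\}$; and within the parametrisation $\theta = \alpha \oplus \beta \oplus \alpha^{\prime}$ from bullet~1 the condition $0/\theta = \{0\}$ translates precisely to $\alpha \in {\rm Con}_0(M)$, the dual condition $1/\theta = \{1\}$ being then automatic. For the fourth bullet, view ${\cal L}_n$ as a bi--lattice decomposed as ${\cal L}_k \oplus {\cal L}_1 \oplus {\cal L}_k$ when $n = 2k-1$ and as ${\cal L}_k \oplus {\cal L}_2 \oplus {\cal L}_k$ when $n = 2k$; bullet~1 then computes ${\rm Con}_{\BI}({\cal L}_n) \cong {\rm Con}({\cal L}_k) \times {\rm Con}_{\BI}(K) \cong {\cal L}_2^{\lfloor n/2 \rfloor}$ (alternatively deducible from Proposition \ref{congichains} using $|N({\cal L}_n)| = \lfloor n/2 \rfloor$). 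For $n \geq 2$, Lemma \ref{01irred} makes ${\cal L}_n$ an antiortholattice, bullet~2 gives ${\rm Con}_{\BZ}({\cal L}_n) \cong {\rm Con}_{\BI 01}({\cal L}_n) \oplus {\cal L}_2$, and bullet~3 (with $n = 2$ handled by hand, since bullet~3 requires $M$ non--trivial) evaluates ${\rm Con}_{\BI 01}({\cal L}_n)$ to ${\cal L}_2^{\lfloor n/2 \rfloor - 1}$.

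The only mildly delicate points will be verifying that the involution on $M \oplus K \oplus M^d$ really turns $\alpha \oplus \beta \oplus \gamma$ into $\gamma^{\prime} \oplus \beta^{\prime} \oplus \alpha^{\prime}$ as a lattice congruence (immediate from the definition of $\cdot^{\prime}$ on the ordinal sum and the fact that restrictions to the three canonical parts respect $\oplus $), and the small--$n$ bookkeeping in bullet~4, where the outer and middle factors of ${\cal L}_n$ must be chosen correctly in the odd versus even cases and $n = 2$ must be treated separately.
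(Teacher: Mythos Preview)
Your proposal is correct and follows precisely the ``straightforward'' route the paper alludes to: the paper does not actually prove this lemma but cites \cite{rgcmfp} and remarks that the proofs are immediate, with the last bullet being the finite specialisation of Proposition~\ref{congichains} and Corollary~\ref{cgaolchains}. Your argument---decomposing congruences of $M\oplus K\oplus M^d$ via the ordinal--sum isomorphism, applying Lemma~\ref{cginvlat} to characterise the $\BI$--congruences as those with $\gamma=\alpha^{\prime}$ and $\beta\in{\rm Con}_{\BI}(K)$, then analysing the trivial Brouwer complement for bullet~2 and combining for bullets~3 and~4---is exactly the intended reconstruction, and your handling of the glue points and the small cases $n\leq 2$ is accurate.
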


\begin{corollary} For any antiortholattice chain $C$, ${\rm Con}_{\BZ 01}(C)={\rm Con}_{\BI 01}(C)=\{eq(\pi )\ |\ \pi \in {\rm Part}(C),\pi \subseteq {\rm Cvx}(C),\{0\}\in \pi ,\{1\}\in \pi ,(\forall \, \gamma \in \pi )\, (\gamma ^{\prime }\in \pi )\}=\{eq(\{\{0\},\{1\},\gamma \cup \gamma ^{\prime }\}\cup \rho \cup \{\beta ^{\prime }\ |\ \beta \in \rho \})\ |\ \gamma \in {\rm Cvx}(NZ(C)\setminus \{0\}),(\forall \, x\in NZ(C)\setminus \gamma )\, (\forall \, y\in \gamma )\, (x<y),\rho \in {\rm Part}(NZ(C)\setminus (\{0\}\cup \gamma )),\rho \subseteq {\rm Cvx}(C)\}\cong {\rm Con}_{\BI }(C\setminus \{0,1\})\cong {\rm Con}(N(C))$, so that ${\rm Con}_{\BZ }(C)\cong {\rm Con}(N(C))\oplus {\cal L}_2$ and thus $C$ is subdirectly irreducible iff $|C|\leq 5$.\label{cgaolchains}\end{corollary}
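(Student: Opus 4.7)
The plan is to thread together earlier results in a chain of reductions. Let $C$ be a non--trivial antiortholattice chain (the trivial case being immediate). First, the equality ${\rm Con}_{\BZ 01}(C)={\rm Con}_{\BI 01}(C)$ will follow as a direct instance of the second bullet of Lemma \ref{thecg}. To establish the two set--theoretic descriptions, I will apply Lemma \ref{cgichains}, which characterizes ${\rm Con}_{\I }(C)$ as the partitions of $C$ into convex classes closed as a family under the involution; imposing the $01$--preserving condition forces $\{0\}$ and $\{1\}$ to be separate singleton classes, so $0$ and $1$ must be excluded from both the central self--dual class $\gamma \cup \gamma^{\prime}$ and from the classes of the partition $\rho$. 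Substituting $NZ(C)\setminus \{0\}$ for $NZ(C)$ in the second form of Lemma \ref{cgichains} will give the second displayed description, and the first will follow by direct rewriting.

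For the isomorphism ${\rm Con}_{\BI 01}(C)\cong {\rm Con}_{\BI }(C\setminus \{0,1\})$: since $0^{\prime}=1$, the involution of $C$ restricts to $C\setminus \{0,1\}$, making it an i--chain. The restriction map $\theta \mapsto \theta \cap (C\setminus \{0,1\})^2$ will then be routinely checked to be a lattice isomorphism, with inverse adjoining $\{0\}$ and $\{1\}$ as singleton classes. Applying Proposition \ref{congichains} to $C^{\prime }:=C\setminus \{0,1\}$, together with $N(C^{\prime })=N(C)\setminus \{0\}$, will yield ${\rm Con}_{\BI }(C^{\prime })\cong {\rm Con}(N(C)\setminus \{0\})\times {\cal L}_2$. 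To identify this with ${\rm Con}(N(C))$, I plan to use the decomposition $N(C)={\cal L}_2\oplus (N(C)\setminus \{0\})$ as an ordinal sum of lattices (valid whenever $0$ has a cover in $N(C)$, in particular always in the finite case, with the edge case $|N(C)|=1$, i.e.\ $|C|\in \{2,3\}$, handled directly), and then invoke the ordinal--sum--of--congruences formula from the Preliminaries to conclude ${\rm Con}(N(C))\cong {\cal L}_2\times {\rm Con}(N(C)\setminus \{0\})$.

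Combining the above with the second bullet of Lemma \ref{thecg} will give ${\rm Con}_{\BZ }(C)\cong {\rm Con}_{\BI 01}(C)\oplus {\cal L}_2\cong {\rm Con}(N(C))\oplus {\cal L}_2$. For subdirect irreducibility, I will use that $C$ (as a $\BZ $--algebra) is subdirectly irreducible iff ${\rm Con}_{\BZ }(C)$ has at most one atom. Since $N(C)$ is a chain, ${\rm Con}(N(C))\cong {\cal L}_2^{|N(C)|-1}$ has exactly $|N(C)|-1$ atoms; these will also be the atoms of ${\rm Con}(N(C))\oplus {\cal L}_2$ whenever $|N(C)|\geq 2$, while for $|N(C)|=1$ the ordinal sum degenerates to ${\cal L}_2$, still with a unique atom. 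Hence a unique atom occurs exactly when $|N(C)|\in \{1,2\}$, and since $|C|=2|N(C)|+|Z(C)|$ with $|Z(C)|\in \{0,1\}$, this translates to $|C|\leq 5$. The main obstacle I anticipate is the identification ${\rm Con}(N(C)\setminus \{0\})\times {\cal L}_2\cong {\rm Con}(N(C))$: routine via the ordinal--sum decomposition for finite chains, but requiring a more careful direct argument when $N(C)\setminus \{0\}$ has no minimum in the infinite case.
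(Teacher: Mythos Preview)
Your chain of reductions is the same as the paper's: Lemma \ref{thecg} for ${\rm Con}_{\BZ 01}(C)={\rm Con}_{\BI 01}(C)$ and ${\rm Con}_{\BZ}(C)\cong{\rm Con}_{\BI 01}(C)\oplus{\cal L}_2$, Lemma \ref{cgichains} for the explicit descriptions, the restriction isomorphism ${\rm Con}_{\BI 01}(C)\cong{\rm Con}_{\BI}(C\setminus\{0,1\})$, and Proposition \ref{congichains} applied to $C\setminus\{0,1\}$ to reach ${\rm Con}(N(C)\setminus\{0\})\times{\cal L}_2$.

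The one substantive divergence is in how you pass from ${\rm Con}(N(C)\setminus\{0\})\times{\cal L}_2$ to ${\rm Con}(N(C))$. You propose the ordinal--sum decomposition $N(C)\cong{\cal L}_2\oplus(N(C)\setminus\{0\})$, which, as you correctly flag, needs $0$ to have a cover in $N(C)$ and so fails for general infinite chains. The paper instead establishes this isomorphism by a direct map analogous to the map $\Phi$ in the proof of Proposition \ref{congichains}: send $\zeta\in{\rm Con}(N(C))$ to $(\zeta\cap(N(C)\setminus\{0\})^2,\,0)$ if $0/\zeta=\{0\}$ and to $(\zeta\cap(N(C)\setminus\{0\})^2,\,1)$ otherwise. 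This works uniformly for all chains and resolves exactly the obstacle you anticipated.

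The same issue recurs in your subdirect--irreducibility step: invoking ${\rm Con}(N(C))\cong{\cal L}_2^{|N(C)|-1}$ is only justified for finite $N(C)$, so your atom count does not cover infinite $C$. The paper circumvents this by appealing to Corollary \ref{siichains} (or equivalently the argument in Remark \ref{cgchain} that any chain with more than two elements is subdirectly reducible as a lattice), which handles all cardinalities at once. With that replacement your argument goes through in full generality.
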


\begin{proof} By Remark \ref{cgchain}, for any bounded chain $C$, we have ${\rm Con}_{01}(C)=\{eq(\pi )\ |\ \pi \in {\rm Part}(C),\pi \subseteq {\rm Cvx}(C),\{0\}\in \pi ,\{1\}\in \pi \}$. The equalities and isomorphisms in the enunciation now follow by Lemmas \ref{thecg} and \ref{cgichains}, along with Proposition \ref{congichains} and Corollary \ref{siichains}; indeed, Proposition \ref{congichains} shows that ${\rm Con}_{\BI 01}(C)\cong {\rm Con}(N(C)\setminus \{0\})\times {\cal L}_2\cong {\rm Con}(N(C))$, with the latter isomorphism established similarly to $\Phi $ in its proof.\end{proof}

\begin{remark} Clearly, if $M$ is a lattice with $0$, then ${\rm Con}_0({\cal L}_2\oplus M)\cong {\rm Con}(M)$, because the map $\alpha \mapsto \alpha \cap M^2$ sets a lattice isomorphism from ${\rm Con}_0({\cal L}_2\oplus M)$ to ${\rm Con}(M)$.

Similarly, if $L$ is a bi--lattice with $0$ strictly meet--irreducible, so that $1$ is strictly join--irreducible by the self--duality of $L$, then $L={\cal L}_2\oplus K\oplus {\cal L}_2$ for some bi--lattice $K$, ${\rm Con}_{01}(L)=\{\Delta _{{\cal L}_2}\oplus \alpha \oplus \Delta _{{\cal L}_2}\ |\ \alpha \in {\rm Con}(K)\}=\{eq(\{\{0\},\{1\}\}\cup K/\alpha )\ |\ \alpha \in {\rm Con}(K)\}\cong {\rm Con}(K)$ and ${\rm Con}_{\BI 01}(L)=\{\Delta _{{\cal L}_2}\oplus \alpha \oplus \Delta _{{\cal L}_2}\ |\ \alpha \in {\rm Con}_{\BI }(K)\}=\{eq(\{\{0\},\{1\}\}\cup K/\alpha )\ |\ \alpha \in {\rm Con}_{\BI }(K)\}\cong {\rm Con}_{\BI }(K)$.

By Remark \ref{theconstr} and Lemma \ref{thecg}, if, furthermore, $K$ is a pseudo--Kleene algebra, then $L={\cal L}_2\oplus K\oplus {\cal L}_2$ is an antiortholattice and ${\rm Con}_{\BZ }(L)={\rm Con}_{\BI 01}(L)\cup \{\nabla _L\}\cong {\rm Con}_{\BI 01}(L)\oplus {\cal L}_2\cong {\rm Con}_{\BI }(K)\oplus {\cal L}_2$, therefore the antiortholattice $L$ is subdirectly irreducible iff the bi--lattice $K$ is subdirectly irreducible.\label{lksi}\end{remark}

\begin{lemma}{\rm \cite{kumu}} For any bounded lattices $H$ and $K$ with $|H|>2$ and $|K|>2$, we have:\begin{itemize}
\item $\{\alpha \boxplus \beta \ |\ \alpha \in {\rm Con}_{01}(H),\beta \in {\rm Con}_{01}(K)\}\cup \{\nabla _{H\boxplus K}\}\subseteq {\rm Con}(H\boxplus K)\subseteq \{\alpha \boxplus \beta \ |\ \alpha \in {\rm Con}_{01}(H),\beta \in {\rm Con}_{01}(K)\}\cup \{eq(H\setminus \{0\},K\setminus \{1\}),eq(H\setminus \{1\},K\setminus \{0\}),\nabla _{H\boxplus K}\}$.\end{itemize}

For all $t\in \N ^*$ and any bounded lattices $L_1,\ldots ,L_t$, if $L=\boxplus _{i=1}^t({\cal L}_2\oplus L_i\oplus {\cal L}_2)$, then:\begin{itemize}
\item if $t=2$, then ${\rm Con}(L)=\{eq(\{\{0\},\{1\}\}\cup L_1/\alpha \cup L_2/\beta )\ |\ \alpha \in {\rm Con}(L_1),\beta \in {\rm Con}(L_2)\}\cup \{eq(\{0\}\cup L_1,L_2\cup \{1\}),eq(\{0\}\cup L_2,L_1\cup \{1\}),\nabla _L\}\cong ({\rm Con}(L_1)\times {\rm Con}(L_2))\oplus {\cal L}_2^2$;
\item if $t\geq 3$, then $\displaystyle {\rm Con}(L)=\{eq(\{\{0\},\{1\}\}\cup \bigcup _{i=1}^tL_i/\alpha _i)\ |\ (\forall \, i\in [1,t])\, (\alpha _i\in {\rm Con}(L_i))\}\cup \{\nabla _L\}\cong (\prod _{i=1}^t{\rm Con}(L_i))\oplus {\cal L}_2$.\end{itemize}\label{fortheex}\end{lemma}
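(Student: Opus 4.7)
My approach is to establish Part 1 first by verifying the two inclusions separately, then deduce Part 2 as a corollary using the identification ${\rm Con}_{01}(M_i)\cong{\rm Con}(L_i)$ from Remark~\ref{lksi}, where $M_i={\cal L}_2\oplus L_i\oplus{\cal L}_2$. The first inclusion in Part 1 is a routine verification: for $\alpha\in{\rm Con}_{01}(H)$ and $\beta\in{\rm Con}_{01}(K)$, every non-singleton class of $\alpha\boxplus\beta$ lies inside $H\setminus\{0,1\}$ or $K\setminus\{0,1\}$, and compatibility with $\wedge,\vee$ in $H\boxplus K$ reduces to a handful of cases since any mixed meet (respectively join) in the horizontal sum equals $0$ (respectively $1$), which are singleton classes.

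For the second inclusion, I fix $\theta\in{\rm Con}(H\boxplus K)$ and set $\alpha=\theta\cap H^2\in{\rm Con}(H)$, $\beta=\theta\cap K^2\in{\rm Con}(K)$, then split into cases. When $\alpha\in{\rm Con}_{01}(H)$ and $\beta\in{\rm Con}_{01}(K)$, I rule out mixed pairs: if $x\in H\setminus\{0,1\}$ and $y\in K\setminus\{0,1\}$ satisfied $(x,y)\in\theta$, then meeting with $x$ would give $(x,0)\in\theta\cap H^2=\alpha$, contradicting $0/\alpha=\{0\}$; hence $\theta=\alpha\boxplus\beta$. If instead $\alpha\notin{\rm Con}_{01}(H)$, say $0/\alpha\supsetneq\{0\}$, I pick $x\in H\setminus\{0,1\}$ with $0\,\theta\,x$; joining with any $y\in K\setminus\{0,1\}$ yields $y\,\theta\,1$, and then meeting with any $x'\in H\setminus\{0,1\}$ forces $0\,\theta\,x'$, giving $0/\theta\supseteq H\setminus\{1\}$ and $1/\theta\supseteq K\setminus\{0\}$. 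Therefore either $(0,1)\in\theta$ (so $\theta=\nabla_{H\boxplus K}$) or $\theta=eq(H\setminus\{1\},K\setminus\{0\})$. Symmetric arguments for $1/\alpha\supsetneq\{1\}$, $0/\beta\supsetneq\{0\}$, or $1/\beta\supsetneq\{1\}$ yield the other exceptional equivalence or force $\theta=\nabla_{H\boxplus K}$ (the last using $|H|,|K|>2$).

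For Part 2 with $t=2$, I apply Part 1 to $H=M_1,K=M_2$; the second inclusion of Part 1 cuts the candidates down to $\alpha\boxplus\beta$ with $\alpha\in{\rm Con}_{01}(M_1),\beta\in{\rm Con}_{01}(M_2)$, together with $\theta_1=eq(M_1\setminus\{0\},M_2\setminus\{1\})$, $\theta_2=eq(M_1\setminus\{1\},M_2\setminus\{0\})$, and $\nabla_L$. I then verify that $\theta_1$ and $\theta_2$ really are congruences by exploiting the fact that $0$ is meet-irreducible and $1$ is join-irreducible in each $M_i$, so any $x,y\in M_i\setminus\{0,1\}$ satisfy $x\wedge y\neq 0$ and $x\vee y\neq 1$; this keeps each of the two classes closed under $\wedge,\vee$ within $M_i$, while mixed meets and joins across the horizontal sum land in the correct classes. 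A direct computation yields $\theta_1\cap\theta_2=\eta$, $\theta_1\vee\theta_2=\nabla_L$, and $\theta_1,\theta_2$ incomparable, where $\eta$ is the maximum of the diagonal congruences; this produces the ${\cal L}_2^2$ cap stacked on top of ${\rm Con}(L_1)\times{\rm Con}(L_2)$. For $t\geq 3$, I induct using Part 1 with $H=M_1,K=\boxplus_{i=2}^t M_i$; the crucial new point is that the exceptional equivalences of Part 1 fail here: for $\theta_1=eq(M_1\setminus\{0\},(\boxplus_{i=2}^t M_i)\setminus\{1\})$, picking $x\in M_2\setminus\{0,1\}$ and $y\in M_3\setminus\{0,1\}$ (both in the second class) gives $x\vee y=1$, so the congruence property would force $1\,\theta_1\,(0\vee y)=y$, which fails since $1$ and $y$ lie in different classes. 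Thus only the product congruences and $\nabla_L$ survive, and iterating the inductive description of ${\rm Con}_{01}(\boxplus_{i=2}^t M_i)$ delivers the claimed $(\prod_{i=1}^t{\rm Con}(L_i))\oplus{\cal L}_2$. The main obstacle I expect is the bookkeeping in Part 1 Case B -- enumerating the sub-cases for $\alpha$ and $\beta$ failing to lie in ${\rm Con}_{01}$ and matching each to the correct candidate -- together with the contrast between $t=2$ and $t\geq 3$, since the exceptional equivalences survive precisely when the meet/join-irreducibility of $0,1$ in each $M_i$ is not overridden by an additional horizontal summand producing joins equal to $1$.
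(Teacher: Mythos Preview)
The paper does not actually prove this lemma; it is quoted from \cite{kumu} with the comment that the proof is ``straightforward,'' so there is no argument in the paper to compare against. Your proposal supplies a correct direct proof along the expected lines.

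Two small points to tighten. In Case~B of Part~1, when you assume $0/\alpha\supsetneq\{0\}$ and ``pick $x\in H\setminus\{0,1\}$ with $0\,\theta\,x$,'' you should remark that if the only witness in $0/\alpha\setminus\{0\}$ is $1$ then already $(0,1)\in\theta$ and $\theta=\nabla_{H\boxplus K}$; otherwise such an $x$ exists (this is where $|H|>2$ enters in that sub-case). Second, your appeal to Remark~\ref{lksi} for ${\rm Con}_{01}(M_i)\cong{\rm Con}(L_i)$ is slightly loose, since that remark literally covers ${\rm Con}_0({\cal L}_2\oplus M)$ and the bi--lattice situation; the cleanest justification is the ordinal--sum isomorphism ${\rm Con}({\cal L}_2\oplus L_i\oplus{\cal L}_2)\cong{\cal L}_2\times{\rm Con}(L_i)\times{\cal L}_2$ recorded in Section~\ref{preliminaries}, under which the condition ``$0$ and $1$ have singleton classes'' picks out exactly $\{\Delta_{{\cal L}_2}\}\times{\rm Con}(L_i)\times\{\Delta_{{\cal L}_2}\}$.

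Your treatment of the $t=2$ versus $t\geq 3$ dichotomy is the right one: the exceptional equivalences $eq(M_1\setminus\{0\},K\setminus\{1\})$ and its mate are genuine congruences when $K$ is a single $M_2$ (because $0$ is strictly meet--irreducible and $1$ strictly join--irreducible there, so the two blocks are closed under the lattice operations), and fail for $t\geq 3$ exactly by the mixed--join argument you give. For the inductive step it is worth isolating the general fact, which your argument implicitly proves, that ${\rm Con}_{01}(H\boxplus K)=\{\alpha\boxplus\beta:\alpha\in{\rm Con}_{01}(H),\beta\in{\rm Con}_{01}(K)\}$ whenever $|H|,|K|>2$; this is what makes the iteration of ${\rm Con}_{01}$ across the summands go through cleanly without needing the full description of ${\rm Con}(\boxplus_{i\geq 2}M_i)$.
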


With the notations in the previous lemma, note that, if $\alpha _i\in {\rm Con}(L_i)$ for all $i\in [1,t]$, then $\displaystyle eq(\{\{0\},\{1\}\}\cup \bigcup _{i=1}^tL_i/\alpha _i)=\boxplus _{i=1}^t(\Delta _{{\cal L}_2}\oplus \alpha _i\oplus \Delta _{{\cal L}_2})$.

\begin{proposition} Let $t\in \N \setminus \{0,1\}$, $K_1,\ldots ,K_t,L_1,\ldots ,L_t$ be bi--lattices such that $|K_i|>2$ for all $i\in [1,t]$, $K=\boxplus _{i=1}^tK_i$ and $L=\boxplus _{i=1}^t({\cal L}_2\oplus L_i\oplus {\cal L}_2)$. Then:\begin{itemize}
\item $\displaystyle {\rm Con}_{\BI }(K)=\{\boxplus _{i=1}^t\alpha _i\ |\ (\forall \, i\in [1,t])\, (\alpha _i\in {\rm Con}_{\BI 01}(K_i))\}\cup \{\nabla _K\}\cong (\prod _{i=1}^t{\rm Con}_{\BI 01}(K_i))\oplus {\cal L}_2$;
\item $\displaystyle {\rm Con}_{\BI }(L)=\{eq(\{\{0\},\{1\}\}\cup \bigcup _{i=1}^tL_i/\alpha _i)\ |\ (\forall \, i\in [1,t])\, (\alpha _i\in {\rm Con}_{\BI }(L_i))\}\cup \{\nabla _L\}\cong (\prod _{i=1}^t{\rm Con}_{\BI }(L_i))\oplus {\cal L}_2$.\end{itemize}\label{alsofortheex}\end{proposition}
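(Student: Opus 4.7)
The plan is to combine Lemma \ref{fortheex}, which lists the lattice congruences of $K$ and $L$, with Lemma \ref{cginvlat}, which picks out the involution--preserving ones as the fixed points of $\theta \mapsto \theta^{\prime}$. The uniform observation for both bullets is that any $\theta$ in ${\rm Con}_{\BI}$ other than the top satisfies $0/\theta = \{0\}$ and $1/\theta = \{1\}$ and decomposes summand--wise; both facts flow from the horizontal--sum identities $a \vee b = 1$ and $a \wedge b = 0$ for $a, b$ in distinct summands and different from $0, 1$.

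For the first bullet, I would carry out a short ``domino'' argument to show that if $\theta \in {\rm Con}_{\BI}(K)$ and $0 \, \theta \, x$ for some $x \in K_i \setminus \{0,1\}$, then $\theta = \nabla_K$: pick $y \in K_j \setminus \{0,1\}$ with $j \neq i$ (available because $t \geq 2$ and $|K_j| > 2$), use $y = 0 \vee y \, \theta \, x \vee y = 1$ to obtain $y \, \theta \, 1$, then by involution $y^{\prime} \, \theta \, 0$ with $y^{\prime} \in K_j \setminus \{0,1\}$, and finally $x = x \vee 0 \, \theta \, x \vee y^{\prime} = 1$ to get $x \, \theta \, 1$, forcing $0 \, \theta \, 1$. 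Hence $0/\theta = \{0\}$ and, by involution, $1/\theta = \{1\}$. The dual meet identity then forces every non--singleton $\theta$--class into a single $K_i \setminus \{0,1\}$, so $\theta = \boxplus_{i=1}^t \alpha_i$ with $\alpha_i := \theta \cap K_i^2 \in {\rm Con}_{\BI 01}(K_i)$ (each $K_i$ being a bi--subalgebra of $K$). Conversely, any such $\boxplus_{i=1}^t \alpha_i$ is a lattice congruence by iterated application of the first bullet of Lemma \ref{fortheex} and preserves the involution of $K$ since that involution restricts summand--wise. The product--plus--${\cal L}_2$ isomorphism drops out immediately, with $\nabla_K$ placed strictly on top.

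For the second bullet, Lemma \ref{fortheex} describes ${\rm Con}(L) \setminus \{\nabla_L\}$ as the standard congruences $eq(\{\{0\},\{1\}\} \cup \bigcup_{i=1}^t L_i/\alpha_i)$ with $\alpha_i \in {\rm Con}(L_i)$, plus, only in the case $t = 2$, two exceptional congruences $eq(\{0\} \cup L_1, L_2 \cup \{1\})$ and $eq(\{0\} \cup L_2, L_1 \cup \{1\})$. The involution of $L$ stabilises each bi--sublattice ${\cal L}_2 \oplus L_i \oplus {\cal L}_2$ while swapping $0 \leftrightarrow 1$, so it sends a class like $\{0\} \cup L_1$ to $\{1\} \cup L_1$, which is not a class of the given partition; by Lemma \ref{cginvlat} the exceptional congruences are therefore excluded. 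For the standard congruences, involution preservation reduces class by class to each $\alpha_i$ being involution--preserving on the bi--lattice $L_i$, i.e., $\alpha_i \in {\rm Con}_{\BI}(L_i)$, giving exactly the set and isomorphism in the statement. The only delicate step in the whole argument is the removal of the exceptional $t = 2$ congruences, which works precisely because the involution respects the horizontal--sum decomposition.
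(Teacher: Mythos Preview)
Your proof is correct and follows essentially the same strategy as the paper: combine Lemma \ref{fortheex} (the description of the lattice congruences of a horizontal sum) with Lemma \ref{cginvlat} (the characterisation ${\rm Con}_{\I}(L)=\{\theta\in{\rm Con}(L)\mid \theta=\theta'\}$), and observe that the two exceptional congruences in the $t=2$ case are interchanged by $\theta\mapsto\theta'$ and hence not fixed.

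The only difference worth noting concerns the first bullet. The paper handles arbitrary $t$ simply by invoking the associativity of the horizontal sum together with Lemma \ref{fortheex} to list ${\rm Con}(K)$ up to the two exceptional congruences, then discards those via $eq(K_1\setminus\{0\},K_2\setminus\{1\})'=eq(K_1\setminus\{1\},K_2\setminus\{0\})$. You instead give a self--contained ``domino'' argument showing directly that any $\theta\in{\rm Con}_{\BI}(K)$ with $0/\theta\neq\{0\}$ must be $\nabla_K$, and only then appeal to Lemma \ref{fortheex} for the converse inclusion. Your route is slightly more elementary (it works for all $t\geq 2$ in one stroke and does not need to track the exceptional congruences through an induction), while the paper's route is shorter because Lemma \ref{fortheex} already packages the needed structure. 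Both arrive at the same place; there is no gap in either.
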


\begin{proof} By the associativity of the horizontal sum, Lemmas \ref{cginvlat} and \ref{fortheex} and the fact that $eq(K_1\setminus \{0\},K_2\setminus \{1\})^{\prime }=eq(K_1\setminus \{1\},K_2\setminus \{0\})\neq eq(K_1\setminus \{0\},K_2\setminus \{1\})$, hence $eq(K_1\setminus \{0\},K_2\setminus \{1\})\notin {\rm Con}_{\BI }(K_1\boxplus K_2)$.\end{proof}

\begin{remark} Obviously, if $B$ is a Boolean algebra, then $B$ is a Kleene algebra, with its Boolean complement as involution, and has ${\rm Con}_{\BI }(B)={\rm Con}(B)$. Actually, by \cite{bruhar}, all Boolean algebras are orthomodular lattices, which are among the pseudo--Kleene algebras $K$ with the property that ${\rm Con}_{\BI }(K)={\rm Con}(K)$. But the Boolean complement is not necessarily the only involution on $B$, for instance the Boolean algebra ${\cal L}_2^2$ can also be organized as the bi--lattice ${\cal L}_3\boxplus {\cal L}_3$, which is not a pseudo--Kleene algebra, because, with the notations in the leftmost figure below, $a=a\wedge a^{\prime }\nleq b\vee b^{\prime }=b$. Clearly, if $L$ is a bounded lattice, $A$ and $B$ are non--trivial bi--lattices and $A$ is not a pseudo--Kleene algebra, then $A\boxplus B$ is not a pseudo--Kleene algebra, while $L\oplus B\oplus L^d$ is a pseudo--Kleene algebra iff $B$ is a pseudo--Kleene algebra.\vspace*{-17pt}

\begin{center}\begin{tabular}{cccc}
\begin{picture}(40,50)(0,0)
\put(20,0){\circle*{3}}
\put(30,10){\circle*{3}}
\put(20,20){\circle*{3}}
\put(10,10){\circle*{3}}
\put(18,-9){$0=1^{\prime }$}
\put(18,23){$1=0^{\prime }$}
\put(-17,8){$a=a^{\prime }$}
\put(32,7){$b=b^{\prime }$}
\put(20,20){\line(-1,-1){10}}
\put(20,20){\line(1,-1){10}}
\put(20,0){\line(-1,1){10}}
\put(20,0){\line(1,1){10}}
\end{picture}
&\hspace*{5pt}
\begin{picture}(40,50)(0,0)
\put(20,0){\circle*{3}}
\put(20,10){\circle*{3}}
\put(30,10){\circle*{3}}
\put(20,20){\circle*{3}}
\put(10,10){\circle*{3}}
\put(18,-9){$0=1^{\prime }$}
\put(18,23){$1=0^{\prime }$}
\put(-17,8){$a=a^{\prime }$}
\put(22,7){$b$}
\put(32,7){$b^{\prime }$}
\put(20,20){\line(-1,-1){10}}
\put(20,20){\line(1,-1){10}}
\put(20,0){\line(-1,1){10}}
\put(20,0){\line(1,1){10}}
\put(20,0){\line(0,1){20}}
\end{picture}
&\hspace*{3pt}
\begin{picture}(40,50)(0,0)
\put(20,0){\circle*{3}}
\put(20,40){\circle*{3}}
\put(0,20){\circle*{3}}
\put(30,10){\circle*{3}}
\put(30,30){\circle*{3}}
\put(18,-9){$0=1^{\prime }$}
\put(18,43){$1=0^{\prime }$}
\put(-27,18){$a=a^{\prime }$}
\put(32,7){$b$}
\put(32,28){$b^{\prime }$}
\put(20,0){\line(-1,1){20}}
\put(20,40){\line(1,-1){10}}
\put(20,0){\line(1,1){10}}
\put(0,20){\line(1,1){20}}
\put(20,40){\line(1,-1){10}}
\put(30,10){\line(0,1){20}}
\end{picture}
&\hspace*{28pt}
\begin{picture}(40,50)(0,0)
\put(20,0){\circle*{3}}
\put(30,20){\circle*{3}}
\put(20,40){\circle*{3}}
\put(0,20){\circle*{3}}
\put(30,10){\circle*{3}}
\put(30,30){\circle*{3}}
\put(18,-9){$0=1^{\prime }$}
\put(18,43){$1=0^{\prime }$}
\put(-27,18){$a=a^{\prime }$}
\put(32,7){$b$}
\put(32,17){$c=c^{\prime }$}
\put(32,28){$b^{\prime }$}
\put(20,0){\line(-1,1){20}}
\put(20,40){\line(1,-1){10}}
\put(20,0){\line(1,1){10}}
\put(0,20){\line(1,1){20}}
\put(20,40){\line(1,-1){10}}
\put(30,10){\line(0,1){20}}
\end{picture} \vspace*{7pt}\\ 
${\cal L}_3\boxplus {\cal L}_3$ &\hspace*{5pt} $M_3={\cal L}_3\boxplus {\cal L}_2^2$ &\hspace*{3pt} $N_5={\cal L}_3\boxplus {\cal L}_4$ &\hspace*{28pt} ${\cal L}_3\boxplus {\cal L}_5$ \vspace*{5pt}\\ 
\begin{picture}(60,50)(0,0)
\put(30,0){\circle*{3}}
\put(30,0){\line(2,1){20}}
\put(30,0){\line(-2,1){20}}
\put(30,40){\line(-2,-1){20}}
\put(30,40){\line(2,-1){20}}
\put(10,30){\line(0,-1){20}}
\put(50,30){\line(0,-1){20}}
\put(30,40){\circle*{3}}
\put(10,10){\circle*{3}}
\put(10,30){\circle*{3}}
\put(50,10){\circle*{3}}
\put(50,30){\circle*{3}}
\put(19,-9){$0=1^{\prime }$}
\put(18,43){$1=0^{\prime }$}
\put(52,7){$b$}
\put(52,29){$b^{\prime }$}
\put(3,8){$a$}
\put(3,29){$a^{\prime }$}
\end{picture}
&\hspace*{40pt}
\begin{picture}(60,50)(0,0)
\put(30,0){\circle*{3}}
\put(30,0){\line(-4,1){40}}
\put(30,40){\line(-4,-3){40}}
\put(40,15){\line(-2,1){20}}
\put(30,0){\line(-2,1){20}}
\put(10,10){\line(2,3){20}}
\put(50,30){\line(-2,-3){20}}
\put(30,40){\line(2,-1){20}}
\put(30,40){\circle*{3}}
\put(-10,10){\circle*{3}}
\put(10,10){\circle*{3}}
\put(10,10){\circle*{3}}
\put(20,25){\circle*{3}}
\put(40,15){\circle*{3}}
\put(50,30){\circle*{3}}
\put(19,-9){$0=1^{\prime }$}
\put(18,43){$1=0^{\prime }$}
\put(53,29){$b^{\prime }$}
\put(-36,8){$a=a^{\prime }$}
\put(3,11){$b$}
\put(42,11){$c$}
\put(13,22){$c^{\prime }$}
\end{picture}
&\hspace*{3pt}
\begin{picture}(60,50)(0,0)
\put(30,0){\circle*{3}}
\put(30,0){\line(2,1){20}}
\put(30,0){\line(-2,1){20}}
\put(30,40){\line(-2,-1){20}}
\put(30,40){\line(2,-1){20}}
\put(10,30){\line(0,-1){20}}
\put(50,30){\line(0,-1){20}}
\put(30,40){\circle*{3}}
\put(10,10){\circle*{3}}
\put(10,30){\circle*{3}}
\put(50,10){\circle*{3}}
\put(50,30){\circle*{3}}
\put(19,-9){$0=1^{\prime }$}
\put(18,43){$1=0^{\prime }$}
\put(52,7){$b$}
\put(53,29){$a^{\prime }$}
\put(3,8){$a$}
\put(3,29){$b^{\prime }$}
\end{picture} & \vspace*{7pt}\\ 
${\cal L}_4\boxplus {\cal L}_4$ &\hspace*{40pt} ${\cal L}_3\boxplus ({\cal L}_2\times {\cal L}_3)$ &\hspace*{3pt} $B_6$ &\end{tabular}\end{center}\vspace*{-6pt}

The self--duality property shows that the only way in which the lattices $M_3$ and $N_5$ can be organized as i--lattices is as the horizontal sum of the bi--lattices ${\cal L}_3$ and ${\cal L}_2^2$, respectively the horizontal sum of the bi--lattices ${\cal L}_3$ and ${\cal L}_4$, which makes $M_3$ a pseudo--Kleene algebra, but, with the notations above, since $a=a\wedge a^{\prime }\nleq b\vee b^{\prime }=b^{\prime }$, it follows that $N_5$ can not be organized as a pseudo--Kleene algebra. A similar argument shows that, for any $k,m\in \N \setminus \{0,1,2\}$, the bi--lattice ${\cal L}_k\boxplus {\cal L}_m$ is not a pseudo--Kleene algebra; moreover, if $k$ is odd and $m$ is even, then the lattice ${\cal L}_k\boxplus {\cal L}_m$ can not be organized as a pseudo--Kleene algebra; more generally, if $M$ and $K$ are bounded lattices such that $K$ is non--trivial, $k,m\in \N \setminus \{0,1,2\}$ such that $k$ is odd and $m$ and $|K|$ are even, and $A$ and $B$ are non--trivial bi--lattices, with involutions $^{\prime A}$ and $^{\prime B}$, respectively, such that $0_{AB}$ and $1_{AB}$ are the smallest and the greatest element of $A\boxplus B$, respectively, and, for some $a\in A$ and $b\in B$, $a\wedge a^{\prime A}\notin \{0_{AB},1_{AB}\}$ and $b\vee b^{\prime A}\neq 1_{AB}$, then the bi--lattice $M\oplus (A\boxplus B)\oplus M^d$ is not a pseudo--Kleene algebra, and the lattices $M\oplus ({\cal L}_k\boxplus {\cal L}_m)\oplus M^d$ and $M\oplus ({\cal L}_k\boxplus K)\oplus M^d$ can not be organized as pseudo--Kleene algebras.

${\rm Con}_{\BI }({\cal L}_2\times {\cal L}_3)\cong {\rm Con}_{\BI }({\cal L}_2)\times {\rm Con}_{\BI }({\cal L}_3)\cong {\cal L}_2^2$ and ${\rm Con}_{\BI 01}({\cal L}_2\times {\cal L}_3)\cong {\rm Con}_{\BI 01}({\cal L}_2)\times {\rm Con}_{\BI 01}({\cal L}_3)\cong {\cal L}_1\cong {\rm Con}_{\BI 01}({\cal L}_2)\times {\rm Con}_{\BI 01}({\cal L}_2)\cong {\rm Con}_{\BI 01}({\cal L}_2^2)$. By Proposition \ref{alsofortheex}, ${\rm Con}_{\BI }(M_3)={\rm Con}_{\BI }({\cal L}_3\boxplus {\cal L}_2^2)\cong ({\cal L}_1\times {\cal L}_1)\oplus {\cal L}_2\cong {\cal L}_2$, ${\rm Con}_{\BI }({\cal L}_3\boxplus {\cal L}_3)={\rm Con}_{\BI }(({\cal L}_2\oplus {\cal L}_1\oplus {\cal L}_2)\boxplus ({\cal L}_2\oplus {\cal L}_1\oplus {\cal L}_2))\cong {\cal L}_2\oplus {\cal L}_1\cong {\cal L}_2$, ${\rm Con}_{\BI }({\cal L}_3\boxplus {\cal L}_3)={\rm Con}_{\BI }(({\cal L}_2\oplus {\cal L}_1\oplus {\cal L}_2)\boxplus ({\cal L}_2\oplus {\cal L}_1\oplus {\cal L}_2))\cong {\cal L}_2\oplus {\cal L}_1\cong {\cal L}_2$, ${\rm Con}_{\BI }({\cal L}_3\boxplus {\cal L}_3)={\rm Con}_{\BI }(({\cal L}_2\oplus {\cal L}_1\oplus {\cal L}_2)\boxplus ({\cal L}_2\oplus {\cal L}_1\oplus {\cal L}_2))\cong {\cal L}_2\oplus {\cal L}_1\cong {\cal L}_2$, ${\rm Con}_{\BI }(N_5)={\rm Con}_{\BI }({\cal L}_3\boxplus {\cal L}_4)={\rm Con}_{\BI }(({\cal L}_2\oplus {\cal L}_1\oplus {\cal L}_2)\boxplus ({\cal L}_2\oplus {\cal L}_2\oplus {\cal L}_2))\cong {\cal L}_2\oplus {\cal L}_2\cong {\cal L}_3$, ${\rm Con}_{\BI }({\cal L}_3\boxplus {\cal L}_5)={\rm Con}_{\BI }(({\cal L}_2\oplus {\cal L}_1\oplus {\cal L}_2)\boxplus ({\cal L}_2\oplus {\cal L}_3\oplus {\cal L}_2))\cong {\cal L}_2\oplus {\cal L}_2\cong {\cal L}_3$, ${\rm Con}_{\BI }({\cal L}_4\boxplus {\cal L}_4)={\rm Con}_{\BI }(({\cal L}_2\oplus {\cal L}_2\oplus {\cal L}_2)\boxplus ({\cal L}_2\oplus {\cal L}_2\oplus {\cal L}_2))\cong {\cal L}_2\oplus {\cal L}_2\cong {\cal L}_3$ and ${\rm Con}_{\BI }({\cal L}_3\boxplus ({\cal L}_2\times {\cal L}_3))\cong ({\cal L}_1\times {\cal L}_1)\oplus {\cal L}_2\cong {\cal L}_2$, thus $|{\rm Con}_{\BI }(N_5)|=|{\rm Con}_{\BI }({\cal L}_3\boxplus {\cal L}_5)|=|{\rm Con}_{\BI }({\cal L}_4\boxplus {\cal L}_4)|=3$ and $|{\rm Con}_{\BI }({\cal L}_3\boxplus {\cal L}_3)|=|{\rm Con}_{\BI }({\cal L}_3\boxplus ({\cal L}_2\times {\cal L}_3))|=2$.

The pseudo--Kleene algebra in the rightmost diagram above, denoted by $B_6$, is called the {\em benzene ring}. Note that $B_6\cong {\cal L}_4\boxplus {\cal L}_4$, but $B_6\ncong _{\BI }{\cal L}_4\boxplus {\cal L}_4$, and that ${\rm Con}_{\BI }(B_6)=\{\Delta _{B_6},eq(\{0\},\{a,b^{\prime }\},\{b,a^{\prime }\},\{1\}),eq(\{0,a,b^{\prime }\},$\linebreak $\{b,a^{\prime },1\}), eq(\{0,a^{\prime },b\},\{b^{\prime },a,1\}),\nabla _{B_6}\}\cong {\cal L}_2\oplus {\cal L}_2^2$, so $|{\rm Con}_{\BI }(B_6)|=5$.\label{someex}\end{remark}

\begin{theorem}{\rm \cite{free,gcze,kumu}} Let $n\in \N ^*$ and $L$ be a lattice with $|L|=n$. Then:\begin{itemize}
\item $|{\rm Con}(L)|\leq 2^{n-1}$;
\item $|{\rm Con}(L)|=2^{n-1}$ iff $L\cong {\cal L}_n$ iff ${\rm Con}(L)\cong {\cal L}_2^{n-1}$;
\item $|{\rm Con}(L)|<2^{n-1}$ iff $n\geq 4$ and $|{\rm Con}(L)|\leq 2^{n-2}$;
\item $|{\rm Con}(L)|=2^{n-2}$ iff $n\geq 4$ and $L\cong {\cal L}_k\oplus {\cal L}_2^2\oplus {\cal L}_{n-k-2}$ for some $k\in [1,n-3]$ iff ${\rm Con}(L)\cong {\cal L}_2^{n-2}$;
\item $|{\rm Con}(L)|<2^{n-2}$ iff $n\geq 5$ and $|{\rm Con}(L)|\leq 5\cdot 2^{n-5}$;
\item $|{\rm Con}(L)|=5\cdot 2^{n-5}$ iff $n\geq 5$ and $L\cong {\cal L}_k\oplus N_5\oplus {\cal L}_{n-k-3}$ for some $k\in [1,n-4]$;
\item $|{\rm Con}(L)|<5\cdot 2^{n-5}$ iff $n\geq 6$ and $|{\rm Con}(L)|\leq 2^{n-3}$;
\item $|{\rm Con}(L)|=2^{n-3}$ iff $n\geq 6$ and $L\cong {\cal L}_k\oplus ({\cal L}_2\times {\cal L}_3)\oplus {\cal L}_{n-k-4}$ for some $k\in [1,n-5]$ or $n\geq 7$ and $L\cong {\cal L}_r\oplus {\cal L}_2^2\oplus {\cal L}_s\oplus {\cal L}_2^2\oplus {\cal L}_{n-r-s-4}$ for some $r,s\in \N ^*$ such that $r+s\leq n-5$;
\item $|{\rm Con}(L)|<2^{n-3}$ iff $n\geq 6$ and $|{\rm Con}(L)|\leq 7\cdot 2^{n-6}$;
\item $|{\rm Con}(L)|=7\cdot 2^{n-6}$ iff $n\geq 6$ and, for some $k\in [1,n-5]$, $L\cong {\cal L}_k\oplus ({\cal L}_3\boxplus {\cal L}_5)\oplus {\cal L}_{n-k-4}$ or $L\cong {\cal L}_k\oplus ({\cal L}_4\boxplus {\cal L}_4)\oplus {\cal L}_{n-k-4}$.\end{itemize}\label{maxcglat}\end{theorem}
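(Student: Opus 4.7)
The plan rests on the classical representation of the congruence lattice of a finite lattice in terms of its prime intervals. Since ${\rm Con}(L)$ is a finite distributive lattice, its join--irreducible elements are precisely the principal congruences $Cg_L(a,b)$ generated by covering pairs $a\prec b$, and two such principal congruences coincide iff the corresponding prime intervals are projective (connected by a zig--zag of up-- and down--perspectivities). Writing $P_L$ for the resulting poset of projectivity classes of prime intervals of $L$, one has that $|{\rm Con}(L)|$ equals the number of order ideals of $P_L$, and the whole problem reduces to controlling the cardinality and order structure of $P_L$.

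Any maximal chain of $L$ has exactly $n-1$ prime intervals, so $|P_L|\le n-1$; equality with $P_L$ an antichain forces $L\cong{\cal L}_n$ and yields $|{\rm Con}(L)|=2^{n-1}$, settling the first two bullets. For the remainder I would exploit the ordinal sum isomorphism ${\rm Con}(L_1\oplus L_2)\cong{\rm Con}(L_1)\times{\rm Con}(L_2)$ recalled in the preliminaries: writing $L$ canonically as $L=C_0\oplus Q_1\oplus C_1\oplus\cdots\oplus Q_k\oplus C_k$, with the $C_i$ chains (possibly trivial) and the $Q_j$ ordinally indecomposable non--chain pieces, a direct count gives the multiplicative identity
\[
\frac{|{\rm Con}(L)|}{2^{n-1}}=\prod_{j=1}^{k}\frac{|{\rm Con}(Q_j)|}{2^{|Q_j|-1}}=\prod_{j=1}^{k}r(Q_j),
\]
where the \emph{defect ratio} $r(Q):=|{\rm Con}(Q)|/2^{|Q|-1}$ is strictly less than $1$ for every ordinally indecomposable non--chain $Q$. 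The problem is thereby reduced to listing the small indecomposables in decreasing order of $r$ and determining which products of such ratios realize the successive thresholds.

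A direct computation on the examples exhibited in Remark \ref{someex} and Lemma \ref{fortheex} gives $r({\cal L}_2^2)=1/2$, $r(N_5)=5/16$, $r({\cal L}_2\times{\cal L}_3)=1/4$, $r({\cal L}_3\boxplus{\cal L}_5)=r({\cal L}_4\boxplus{\cal L}_4)=7/32$, $r(M_3)=1/8$, and so on. Sorted descendingly, the first distinct values are $1,\,1/2,\,5/16,\,1/4,\,7/32$, which, multiplied by $2^{n-1}$, produce exactly the thresholds $2^{n-1},\,2^{n-2},\,5\cdot 2^{n-5},\,2^{n-3},\,7\cdot 2^{n-6}$ of the theorem. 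The only product of ratios climbing back to a previous threshold is $(1/2)^2=1/4$, which explains the second shape ${\cal L}_r\oplus{\cal L}_2^2\oplus{\cal L}_s\oplus{\cal L}_2^2\oplus{\cal L}_{n-r-s-4}$ reaching $2^{n-3}$. The main obstacle, and the content of the deep work of \cite{free,gcze,kumu}, is the completeness of this classification: one must prove that every other ordinally indecomposable non--chain lattice $Q$ satisfies $r(Q)\le 7/32$ with equality only in the listed cases, which requires an exhaustive finite case analysis of small indecomposables together with a structural induction showing that in any larger indecomposable either further prime intervals collapse into existing projectivity classes or further order relations appear in $P_Q$, dropping $r(Q)$ below the relevant threshold.
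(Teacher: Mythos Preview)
This theorem is stated in the paper without proof; it is quoted from \cite{free,gcze,kumu}. So there is no ``paper's own proof'' to compare against directly. What can be assessed is whether your outline is sound and how it relates to the method those references use, which is partially visible in this paper through Lemmas \ref{lgcze} and \ref{atoms} and their deployment in the proof of Theorem \ref{maxcgkl}.

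Your defect--ratio reformulation is correct and attractive: the ordinal--sum isomorphism ${\rm Con}(L_1\oplus L_2)\cong{\rm Con}(L_1)\times{\rm Con}(L_2)$ does reduce the question to ordinally indecomposable pieces, and the values $r({\cal L}_2^2)=1/2$, $r(N_5)=5/16$, $r({\cal L}_2\times{\cal L}_3)=1/4$, $r({\cal L}_3\boxplus{\cal L}_5)=r({\cal L}_4\boxplus{\cal L}_4)=7/32$ are right and reproduce the listed thresholds, including the coincidence $(1/2)^2=1/4$ that accounts for the second shape at level $2^{n-3}$. One slip: a maximal chain in an $n$--element lattice has \emph{at most} $n-1$ prime intervals, not exactly $n-1$; the bound $|P_L|\le n-1$ still holds, but it needs the classical (non--trivial) fact that every projectivity class of prime intervals meets every maximal chain.

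The approach actually taken in \cite{gcze,kumu}, and mirrored in this paper for i--lattices, is inductive rather than multiplicative: one picks an atom $\alpha\in{\rm At}({\rm Con}(L))$, uses $|{\rm Con}(L)|\le 2\,|{\rm Con}(L/\alpha)|$ from Lemma \ref{atoms}, controls $|L|-|L/\alpha|$ via the narrows dichotomy of Lemma \ref{lgcze}, and applies the induction hypothesis to $L/\alpha$. Your ratio viewpoint and this quotienting induction are essentially equivalent, but the inductive version avoids having to enumerate all small ordinally indecomposable lattices in advance: each step only analyses the effect of collapsing a single prime quotient. Either way, as you yourself concede, the real work is the exhaustive case analysis showing that no ordinally indecomposable $Q$ other than those listed has $r(Q)$ above the relevant threshold; your sketch defers this entirely to the references, so it is a plan rather than a proof.
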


By Theorem \ref{maxcglat}, for any $n\in \N ^*$ with $n\geq 6$, the first, second, third, fourth and fifth largest possible numbers of congruences of an $n$--element lattice are $2^{n-1}$, $2^{n-2}$, $5\cdot 2^{n-5}$, $2^{n-3}$ and $7\cdot 2^{n-6}$, respectively. In the following section we shall find the largest and number of congruences of an $n$--element i--lattice and that of an $n$--element BZ--lattice with the $0$ meet--irreducible, which is thus an antiortholattice by Lemma \ref{01irred}. To address this problem, let us first notice that, while, as we shall see, the finite i--lattices with the most full congruences also have the most lattice congruences, inequalities between the numbers of the lattice congruences of finite i--lattices with the same number of elements may be reversed between the numbers of their full congruences, as shown by the following example. 

\begin{example} Let us consider the following bi--lattices, whose numbers of congruences we will calculate using Lemmas \ref{thecg} and \ref{fortheex} and Proposition \ref{alsofortheex}.

$|{\cal L}_2\times {\cal L}_3|=|B_6|=6$. ${\rm Con}({\cal L}_2\times {\cal L}_3)\cong {\cal L}_2\times {\cal L}_2^2={\cal L}_2^3$. ${\rm Con}_{\BI }({\cal L}_2\times {\cal L}_3)\cong {\cal L}_2\times {\cal L}_2^{\lfloor 3/2\rfloor }={\cal L}_2^2$. ${\rm Con}(B_6)={\rm Con}({\cal L}_4\boxplus {\cal L}_4)={\rm Con}(({\cal L}_2\oplus {\cal L}_2\oplus {\cal L}_2)\boxplus ({\cal L}_2\oplus {\cal L}_2\oplus {\cal L}_2))\cong {\cal L}_2^2\oplus {\cal L}_2^2$. By Remark \ref{someex}, ${\rm Con}_{\BI }(B_6)\cong {\cal L}_2\oplus {\cal L}_2^2$. Hence $|{\rm Con}({\cal L}_2\times {\cal L}_3)|=8>7=|{\rm Con}(B_6)|$, while $|{\rm Con}_{\BI }({\cal L}_2\times {\cal L}_3)|=4<5=|{\rm Con}_{\BI }(B_6)|$.

Now let $L=(M_3\boxplus {\cal L}_4)\oplus {\cal L}_2^3\oplus (M_3\boxplus {\cal L}_4)$, $M={\cal L}_2^2\oplus {\cal L}_2\oplus {\cal L}_2^2$ and $H={\cal L}_4\boxplus {\cal L}_4\boxplus {\cal L}_4$, represented by the following Hasse diagrams:\vspace*{-10pt}

\begin{center}\begin{tabular}{ccc}
\begin{picture}(40,70)(0,0)
\put(-10,63){$L:$}
\put(20,0){\circle*{3}}
\put(20,10){\circle*{3}}
\put(10,10){\circle*{3}}
\put(40,5){\circle*{3}}
\put(40,15){\circle*{3}}
\put(30,10){\circle*{3}}
\put(20,20){\circle*{3}}
\put(20,0){\line(0,1){20}}
\put(20,0){\line(4,1){20}}
\put(20,20){\line(4,-1){20}}
\put(40,5){\line(0,1){10}}
\put(20,0){\line(1,1){10}}
\put(20,0){\line(-1,1){10}}
\put(20,20){\line(1,-1){10}}
\put(20,20){\line(-1,-1){10}}
\put(20,20){\circle*{3}}
\put(10,30){\circle*{3}}
\put(30,30){\circle*{3}}
\put(20,40){\circle*{3}}
\put(20,20){\line(0,1){10}}
\put(10,30){\line(0,1){10}}
\put(30,30){\line(0,1){10}}
\put(20,40){\line(0,1){10}}
\put(20,20){\line(1,1){10}}
\put(20,20){\line(-1,1){10}}
\put(20,40){\line(1,-1){10}}
\put(20,40){\line(-1,-1){10}}
\put(20,30){\circle*{3}}
\put(10,40){\circle*{3}}
\put(30,40){\circle*{3}}
\put(20,50){\circle*{3}}
\put(20,30){\line(1,1){10}}
\put(20,30){\line(-1,1){10}}
\put(20,50){\line(1,-1){10}}
\put(20,50){\line(-1,-1){10}}
\put(20,50){\circle*{3}}
\put(20,60){\circle*{3}}
\put(10,60){\circle*{3}}
\put(40,55){\circle*{3}}
\put(40,65){\circle*{3}}
\put(30,60){\circle*{3}}
\put(20,70){\circle*{3}}
\put(20,50){\line(1,1){10}}
\put(20,50){\line(-1,1){10}}
\put(20,70){\line(1,-1){10}}
\put(20,70){\line(-1,-1){10}}
\put(20,50){\line(0,1){20}}
\put(20,50){\line(4,1){20}}
\put(20,70){\line(4,-1){20}}
\put(40,55){\line(0,1){10}}
\end{picture} &\hspace*{30pt}
\begin{picture}(40,70)(0,0)
\put(15,55){$M:$}
\put(20,0){\circle*{3}}
\put(10,10){\circle*{3}}
\put(30,10){\circle*{3}}
\put(20,20){\circle*{3}}
\put(20,30){\circle*{3}}
\put(10,40){\circle*{3}}
\put(30,40){\circle*{3}}
\put(20,50){\circle*{3}}
\put(20,20){\line(0,1){10}}
\put(20,0){\line(1,1){10}}
\put(20,0){\line(-1,1){10}}
\put(20,20){\line(1,-1){10}}
\put(20,20){\line(-1,-1){10}}
\put(20,30){\line(1,1){10}}
\put(20,30){\line(-1,1){10}}
\put(20,50){\line(1,-1){10}}
\put(20,50){\line(-1,-1){10}}
\end{picture} &\hspace*{20pt}
\begin{picture}(40,70)(0,0)
\put(15,35){$H:$}
\put(20,0){\circle*{3}}
\put(20,10){\circle*{3}}
\put(20,20){\circle*{3}}
\put(20,30){\circle*{3}}
\put(10,10){\circle*{3}}
\put(30,10){\circle*{3}}
\put(10,20){\circle*{3}}
\put(30,20){\circle*{3}}
\put(20,0){\line(0,1){30}}
\put(20,0){\line(1,1){10}}
\put(20,0){\line(-1,1){10}}
\put(20,30){\line(1,-1){10}}
\put(20,30){\line(-1,-1){10}}
\put(10,10){\line(0,1){10}}
\put(30,10){\line(0,1){10}}
\end{picture}
\end{tabular}\end{center}\vspace*{-5pt}

$|M|=|H|=8$. ${\rm Con}(M)\cong {\cal L}_2^2\times {\cal L}_2\times {\cal L}_2^2={\cal L}_2^5$ and ${\rm Con}_{\BI }(M)\cong {\cal L}_2^2\times {\cal L}_2={\cal L}_2^3$. ${\rm Con}(H)={\rm Con}_{\BI }(H)\cong {\cal L}_2^3\oplus {\cal L}_2$. Hence $|{\rm Con}(M)|=32>9=|{\rm Con}(H)|$, while $|{\rm Con}_{\BI }(M)|=8<9=|{\rm Con}_{\BI }(H)|$.

$|{\cal L}_4\times {\cal L}_5|=|L|=20$. ${\rm Con}({\cal L}_4\times {\cal L}_5)\cong {\cal L}_2^3\times {\cal L}_2^4\cong {\cal L}_2^7$ and ${\rm Con}_{\BI }({\cal L}_4\times {\cal L}_5)\cong {\cal L}_2^{\lfloor 4/2\rfloor }\times {\cal L}_2^{\lfloor 5/2\rfloor }\cong {\cal L}_2^4$. ${\rm Con}(L)\cong ({\cal L}_2\oplus {\cal L}_2)\times {\cal L}_2^3\times ({\cal L}_2\oplus {\cal L}_2)\cong {\cal L}_3^2\times {\cal L}_2^3$ and ${\rm Con}_{\BI }(L)\cong ({\cal L}_2\oplus {\cal L}_2)\times {\cal L}_2^3\cong {\cal L}_3\times {\cal L}_2^3$. Hence $|{\rm Con}({\cal L}_4\times {\cal L}_5)|=128>72=|{\rm Con}(L)|$, while 
$|{\rm Con}_{\BI }({\cal L}_4\times {\cal L}_5)|=16<24=|{\rm Con}_{\BI }(L)|$.

Since every finite chain is a Kleene algebra, it follows that ${\cal L}_2\times {\cal L}_3$ and ${\cal L}_4\times {\cal L}_5$ are Kleene algebras. By Remark \ref{someex}, ${\cal L}_4\boxplus {\cal L}_4$ is not a pseudo--Kleene algebra, thus $H$ is not a pseudo--Kleene algebra. By Remark \ref{theconstr}, $L$ is a pseudo--Kleene algebra and $M$ is a Kleene algebra.

Let $n\in 2(\N \setminus \{0,1,2\})$ and $k\in 2\N ^*$ with $k\leq n-6$, arbitrary, and let us consider the $n$--element Kleene algebras $E_n={\cal L}_{n/2-2}\oplus ({\cal L}_2\times {\cal L}_3)\oplus {\cal L}_{n/2-2}$ and $E_{k,n}={\cal L}_{n/2-k/2-2}\oplus {\cal L}_2^2\oplus {\cal L}_k\oplus {\cal L}_2^2\oplus {\cal L}_{n/2-k/2-2}$, so that $E_{2,n}={\cal L}_{n/2-3}\oplus M\oplus {\cal L}_{n/2-3}$, the $n$--element pseudo--Kleene algebra $F_n={\cal L}_{n/2-2}\oplus B_6\oplus {\cal L}_{n/2-2}\cong {\cal L}_{n/2-2}\oplus ({\cal L}_4\boxplus {\cal L}_4)\oplus {\cal L}_{n/2-2}$ (recall that $\cong $ denotes the existence of a lattice isomorphism) and the $n$--element i--lattice $G_n={\cal L}_{n/2-3}\oplus H\oplus {\cal L}_{n/2-3}$. By Theorem \ref{maxcglat}, $E_n$ and $E_{k,n}$ have the fourth largest possible number of lattice congruences and $F_n$ has the fifth largest possible number of lattice congruences, while the number of lattice congruences of $G_n$ is not even in the top $5$ of the numbers of congruences of $n$--element lattices. But the above and Lemma \ref{thecg} show that $|{\rm Con}_{\BI }(E_{k,n})|=2^{n/2-k/2-3}\cdot 2^2\cdot 2^{k/2}=2^{n/2-1}=2^2\cdot 2^{n/2-3}=|{\rm Con}_{\BI }(E_n)|<|{\rm Con}_{\BI }(G_n)|=9\cdot 2^{n/2-4}<5\cdot 2^{n/2-3}=|{\rm Con}_{\BI }(F_n)|$.\end{example}

Let us see a property of atoms of distributive congruence lattices that we will use in the following section.

\begin{remark} If $L$ is a distributive lattice with a $0$ and $a\in {\rm At}(L)$, then $|L|\leq 2\cdot |[a)|$.

Indeed, let us define $h:L\rightarrow [a)$ by $h(x)=x\vee a$ for all $x\in L$, and consider $u,v,w\in L$ such that $h(u)=h(v)=h(w)$, that is $u\vee a=v\vee a=w\vee a$. Since $u\wedge a,v\wedge a,w\wedge a\in (a]=\{0,a\}$, it follows that $u\wedge a$, $v\wedge a$ and $w\wedge a$ are not pairwise distinct, so at least two of them coincide. Say, for instance, $u\wedge a=v\wedge a$. Then $u$ and $v$ are complements of $a$ in the bounded lattice $[u\wedge a,u\vee a]$, which is a sublattice of $L$ and thus a bounded distributive lattice, hence $u=v$. Therefore each element of $[a)$ has at most two distinct preimages through $h$, hence the statement above on cardinalities.\label{rtheat}\end{remark}

\begin{lemma} If $A$ is a congruence--distributive algebra from a variety $\V $ and $\alpha \in {\rm At}({\rm Con}_{\V }(A))$, then $|{\rm Con}_{\V }(A)|\leq 2\cdot |{\rm Con}_{\V }(A/\alpha )|$.\label{atoms}\end{lemma}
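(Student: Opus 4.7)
The plan is to combine Remark \ref{rtheat} with the standard correspondence theorem for congruences. Since $A$ is congruence--distributive, ${\rm Con}_{\V }(A)$ is a distributive lattice with smallest element $\Delta _A$, and $\alpha $ is by assumption an atom of this lattice. Thus Remark \ref{rtheat} applies directly to $L={\rm Con}_{\V }(A)$ and yields
\[|{\rm Con}_{\V }(A)|\leq 2\cdot |[\alpha )|,\]
where $[\alpha )=\{\theta \in {\rm Con}_{\V }(A)\ |\ \theta \supseteq \alpha \}$ is the principal filter of ${\rm Con}_{\V }(A)$ generated by $\alpha $.

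Next I would invoke the correspondence theorem: the map $\theta \mapsto \theta /\alpha $ is a lattice isomorphism between $[\alpha )$ and ${\rm Con}_{\V }(A/\alpha )$. This holds for any algebra in any variety, in particular for $A\in \V $, and gives $|[\alpha )|=|{\rm Con}_{\V }(A/\alpha )|$. Substituting into the inequality above yields the claim.

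There is no real obstacle here; the argument is a two--line composition of an already proved combinatorial fact about distributive lattices with an atom (Remark \ref{rtheat}) and the correspondence theorem. The only point to be mildly careful about is that Remark \ref{rtheat} was stated for distributive lattices with a $0$, and we must note that ${\rm Con}_{\V }(A)$ is such a lattice (with $0=\Delta _A$) precisely because $A$ is congruence--distributive and every algebra has $\Delta _A$ as its smallest congruence.
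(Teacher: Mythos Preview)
Your proposal is correct and matches the paper's proof essentially verbatim: the paper also invokes the correspondence theorem $\theta \mapsto \theta /\alpha$ to identify $[\alpha)$ with ${\rm Con}_{\V }(A/\alpha )$, and then applies Remark \ref{rtheat} to ${\rm Con}_{\V }(A)$ to conclude.
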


\begin{proof} Since the map $\theta \mapsto \theta /\alpha $ is a lattice isomorphism from $[\alpha )$ to ${\rm Con}_{\V }(A/\alpha )$, from Remark \ref{rtheat} we obtain that $|{\rm Con}_{\V }(A)|\leq 2\cdot |[\alpha )|=2\cdot |{\rm Con}_{\V }(A/\alpha )|$.\end{proof}

\begin{lemma}{\rm \cite{gcze}} For any non--trivial finite lattice $L$:\begin{itemize}
\item\label{lgcze1} ${\rm At}({\rm Con}(L))\neq \emptyset $ and $\{Cg_L(a,b)\ |\ [a,b]\in {\rm Nrw}(L)\}\subseteq {\rm At}({\rm Con}(L))\subseteq \{Cg(a,b)\ |\ a,b\in L,a\prec b\}$;
\item\label{lgcze3} for any $a,b\in L$ such that $a\prec b$: $[a,b]\in {\rm Nrw}(L)$ iff $L/Cg_L(a,b)=\{\{a,b\}\}\cup \{\{x\}\ |\ x\in L\setminus \{a,b\}\}$ iff $|L/Cg_L(a,b)|=|L|-1$;
\item\label{lgcze4} for any $a,b\in L$ such that $a\prec b$ and $|L/Cg_L(a,b)|=|L|-2$, we have one of the following situations:

$a$ is meet--reducible, case in which $a\prec c$ for some $c\in L\setminus \{b\}$ such that $b\prec b\vee c$, $c\prec b\vee c$ and $L/Cg_L(a,b)=\{\{a,b\},\{c,b\vee c\}\}\cup \{\{x\}\ |\ x\in L\setminus \{a,b,c,b\vee c\}\}$, so that $a/Cg_L(a,b)=b/Cg_L(a,b)\prec c/Cg_L(a,b)=(b\vee c) /Cg_L(a,b)$;

or $b$ is join--reducible, case in which, dually, $c\prec b$ for some $c\in L\setminus \{a\}$ such that $a\wedge c\prec a$, $a\wedge c\prec c$ and $L/Cg_L(a,b)=\{\{b\wedge c,c\},\{a,b\}\}\cup \{\{x\}\ |\ x\in L\setminus \{b\wedge c,c,a,b\}\}$, so that $(a\wedge c)/Cg_L(a,b)=c/Cg_L(a,b)\prec a/Cg_L(a,b)=b/Cg_L(a,b)$.\end{itemize}\label{lgcze}\end{lemma}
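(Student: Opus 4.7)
The plan is to prove the three bullets in sequence, using throughout the standard fact that $Cg_L(a,b)$ is the smallest congruence of $L$ containing $(a,b)$, built up by repeatedly taking joins and meets with arbitrary elements of $L$. Since $L$ is non--trivial and finite, ${\rm Con}(L)$ is a non--trivial finite lattice and hence has atoms, which settles the first half of the first bullet.

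For the outer inclusion ${\rm At}({\rm Con}(L))\subseteq \{Cg_L(a,b)\ |\ a\prec b\}$, I take any atom $\alpha $; any non--trivial class $B$ of $\alpha $ is convex and has $|B|\geq 2$, hence contains a cover pair $a\prec b$, and then $\Delta _L\subsetneq Cg_L(a,b)\subseteq \alpha $ forces equality. For the inner inclusion, given a narrow $[a,b]$, I define $\eta $ as the equivalence whose only non--singleton class is $\{a,b\}$ and check that $\eta \in {\rm Con}(L)$. The key observation is that strict join--irreducibility of $b$ gives $(b]=(a]\cup \{b\}$, so for any $x\in L$ either $b\wedge x=b$ (whence $b\leq x$, forcing $a\wedge x=a$ and $(a\wedge x,b\wedge x)=(a,b)\in \eta $) or $b\wedge x\leq a$ (whence $a\wedge x=b\wedge x$ lies in $\Delta _L\subseteq \eta $); compatibility of $\eta $ with $\vee $ is dual, using strict meet--irreducibility of $a$. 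By minimality $\eta =Cg_L(a,b)$, and $\eta $ is an atom because any strictly smaller congruence must split $\{a,b\}$, reducing to $\Delta _L$.

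The second bullet is now essentially a consequence of the analysis above: the narrow--to--quotient direction is the computation of $\eta $, the quotient--to--cardinality direction is immediate, and for the cardinality--to--narrow direction, $|L/Cg_L(a,b)|=|L|-1$ forces a unique non--trivial class of size $2$ that must equal $\{a,b\}$ by convexity and $a\prec b$; if $a$ had a second successor $c\neq b$, then $b$ and $c$ are incomparable (since $a\prec b$, $a\prec c$, $b\neq c$) so $b\vee c>c$, and $(c,b\vee c)=(a\vee c,b\vee c)\in Cg_L(a,b)$ would yield a second non--trivial block, contradiction; dually $b$ is strictly join--irreducible.

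For the third bullet, $|L/Cg_L(a,b)|=|L|-2$ forces two merges in total, leaving either a single block of size $3$ or two disjoint blocks of size $2$. I rule out the $3$--block case: it would require a third element $c$ in the block of $\{a,b\}$ (with $c<a$ or $c>b$ by convexity) while all other elements remain singletons, but then the argument of bullet 2 forces $a$ strictly meet--irreducible and $b$ strictly join--irreducible, turning the $\eta $ of bullet 1 into a congruence containing $(a,b)$, so $Cg_L(a,b)\subseteq \eta $ excludes $c$---contradiction. Hence we are in the two--block case, with $\{a,b\}$ as one block; bullet 2 contrapositively gives that $a$ is meet--reducible or $b$ is join--reducible. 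In the meet--reducible case, a second successor $c$ of $a$ produces the forced pair $(c,b\vee c)\in Cg_L(a,b)$, which must coincide with the second $2$--block $\{c,b\vee c\}$; convexity of this block together with the singleton--ness of every other element forces $c\prec b\vee c$, $b\prec b\vee c$, and rules out a third successor of $a$; the join--reducible case is dual. The main obstacle throughout is the careful accounting of Mal'cev--forced identifications: one must verify that the proposed congruences are genuinely closed under $\vee $ and $\wedge $, and that the claimed cover relations in $L$ are genuine covers (any element strictly in between would force a further merge, contradicting the cardinality hypothesis).
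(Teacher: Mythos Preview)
The paper does not prove this lemma: it is quoted from \cite{gcze} and stated without argument, so there is no ``paper's own proof'' to compare against. Your proof is correct and follows the natural direct route.

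Two places deserve a line of expansion. In the three-block case of the third bullet, when you invoke the bullet-2 argument to force $a$ strictly meet--irreducible, the forced pair $(d,b\vee d)$ for a hypothetical second successor $d$ of $a$ could in principle have $b\vee d=c$ (when $c>b$); but then $(d,c)\in Cg_L(a,b)$ drags $d$ into the three-block, contradicting $d\notin\{a,b,c\}$, so the contradiction survives. In the two-block case, the cover $b\prec b\vee c$ is not literally ``convexity of the block'': for $d$ with $b<d<b\vee c$ one uses $(c\wedge d,d)=(c\wedge d,(b\vee c)\wedge d)\in Cg_L(a,b)$, which forces $d\leq c$ (since $d$ is a singleton class) and hence $b<c$, contradicting that $b$ and $c$ are incomparable successors of $a$. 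You gesture at this under ``further merge'', and with these two clarifications the argument is complete.
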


\begin{remark} For any i--lattice $L$, since $\cdot ^{\prime }$ is a dual lattice automorphism of $L$, it follows that the map $[a,b]\mapsto [b^{\prime },a^{\prime }]$ is a bijection from ${\rm Nrw}(L)$ to itself.\end{remark}

\begin{remark} If $L$ is a lattice, $x,y\in L$ and $\theta \in {\rm Con}(L)$ such that $x\prec y$ and $x/\theta \neq y/\theta $, then $x/\theta \prec y/\theta $.

If $L$ is a finite non--trivial i--lattice, then, for every $a,b\in L$ such that $a\prec b$, since $(a,b)\in Cg_L(a,b)\subseteq Cg_{L,\BI }(a,b)$, it follows that $|L/Cg_{L,\BI }(a,b)|\leq |L/Cg_L(a,b)|\leq |L|-1$, thus, according to Lemma \ref{lgcze}, if $[a,b]\notin {\rm Nrw}(L)$, then $|L/Cg_{L,\BI }(a,b)|\leq |L/Cg_L(a,b)|\leq |L|-2$.\label{onlgcze}\end{remark}

\begin{lemma} Let $L$ be an i--lattice. Then:\begin{itemize}
\item for all $\theta \in {\rm Con}(L)$, $Cg_{L,\I }(\theta )=\theta \vee \theta ^{\prime }$;
\item for all $U\subseteq L^2$, $Cg_{L,\I }(U)=Cg_L(U)\vee (Cg_L(U))^{\prime }=Cg_L(U)\vee Cg_L(U^{\prime })$;
\item for all $a,b\in L$, $Cg_{L,\I }(a,b)=Cg_L(a,b)\vee (Cg_L(a,b))^{\prime }=Cg_L(a,b)\vee Cg_L(a^{\prime },b^{\prime })$.\end{itemize}\label{cgbi}\end{lemma}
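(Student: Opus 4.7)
The plan is to reduce the three bullets one to the next, ultimately resting on the characterization ${\rm Con}_{\I}(L)=\{\theta\in {\rm Con}(L)\mid\theta=\theta'\}$ from Lemma~\ref{cginvlat}.

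For the first bullet, I would first observe that $\theta\vee\theta'$ belongs to ${\rm Con}_{\I}(L)$ by Lemma~\ref{cginvlat} (it is a fixed point of the involution on ${\rm Con}(L)$ described in Remark~\ref{primecglat}) and obviously contains $\theta$, hence $Cg_{L,\I}(\theta)\subseteq\theta\vee\theta'$. For the reverse inclusion, I would take any $\sigma\in {\rm Con}_{\I}(L)$ with $\sigma\supseteq\theta$; by Lemma~\ref{cginvlat} we have $\sigma=\sigma'$, so applying the involution preserves containment and yields $\sigma=\sigma'\supseteq\theta'$, whence $\sigma\supseteq\theta\vee\theta'$. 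Applying this with $\sigma:=Cg_{L,\I}(\theta)$ gives $Cg_{L,\I}(\theta)\supseteq\theta\vee\theta'$, and the first bullet follows.

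For the second bullet, the key move is the identity $Cg_{L,\I}(U)=Cg_{L,\I}(Cg_L(U))$, which holds because both sides are the smallest i--congruence containing the same set of pairs: any i--congruence (in particular any lattice congruence) containing $U$ must contain $Cg_L(U)$, and conversely any i--congruence containing $Cg_L(U)$ contains $U\subseteq Cg_L(U)$. Applying the first bullet to the lattice congruence $Cg_L(U)$ then yields $Cg_{L,\I}(U)=Cg_L(U)\vee(Cg_L(U))'$, and Remark~\ref{primecglat} rewrites $(Cg_L(U))'=Cg_L(U')$, establishing the second equality.

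The third bullet is then immediate from the second applied to the singleton $U=\{(a,b)\}$, using $U'=\{(a',b')\}$ and therefore $Cg_L(U')=Cg_L(a',b')$. There is no genuine obstacle here: the entire lemma is a clean distillation of Lemma~\ref{cginvlat} together with Remark~\ref{primecglat}. The only step requiring mild care is the self--reduction $Cg_{L,\I}(U)=Cg_{L,\I}(Cg_L(U))$, which is itself routine from the universal property of generated congruences.
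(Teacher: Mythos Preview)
Your proof is correct and follows essentially the same approach as the paper: both arguments establish the first bullet from Lemma~\ref{cginvlat} by showing the two inclusions $Cg_{L,\I}(\theta)\subseteq\theta\vee\theta'$ and $\theta\vee\theta'\subseteq Cg_{L,\I}(\theta)$, then derive the second bullet via the reduction $Cg_{L,\I}(U)=Cg_{L,\I}(Cg_L(U))$ together with Remark~\ref{primecglat}, and obtain the third as a special case. The only cosmetic difference is that for the inclusion $\theta\vee\theta'\subseteq Cg_{L,\I}(\theta)$ the paper argues directly that $\theta'\subseteq Cg_{L,\I}(\theta)'=Cg_{L,\I}(\theta)$, while you phrase it via an arbitrary $\sigma\in{\rm Con}_{\I}(L)$ containing $\theta$ and then specialize; these are the same idea.
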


\begin{proof} Let $\theta \in {\rm Con}(L)$. Then $\theta \subseteq Cg_{L,\I }(\theta )$, hence $\theta ^{\prime }\subseteq Cg_{L,\I }(\theta )^{\prime }=Cg_{L,\I }(\theta )$ by Lemma \ref{thecg}, therefore $\theta \vee \theta ^{\prime }\subseteq Cg_{L,\I }(\theta )$. $\theta \subseteq \theta \vee \theta ^{\prime }\in {\rm Con}_{\I }(L)$ by Lemma \ref{thecg}, thus $Cg_{L,\I }(\theta )\subseteq \theta \vee \theta ^{\prime }$. Therefore $Cg_{L,\I }(\theta )=\theta \vee \theta ^{\prime }$.

Now let $U\subseteq L^2$. Then $U\subseteq Cg_L(U)\subseteq Cg_{L,\I }(U)$, thus $Cg_{L,\I }(U)\subseteq Cg_{L,\I }(Cg_L(U))\subseteq Cg_{L,\I }(Cg_{L,\I }(U))=Cg_{L,\I }(U)$, hence $Cg_{L,\I }(U)=Cg_{L,\I }(Cg_L(U))=Cg_L(U)\vee (Cg_L(U))^{\prime }=Cg_L(U)\vee Cg_L(U^{\prime })$, by the above.

In particular, for all $a,b\in L$, $Cg_{L,\I }(a,b)=Cg_L(a,b)\vee (Cg_L(a,b))^{\prime }=Cg_L(a,b)\vee Cg_L(a^{\prime },b^{\prime })$.\end{proof}

\begin{lemma} If $L$ is an i--lattice, then:\begin{enumerate}
\item\label{atlatbi6} ${\rm At}({\rm Con}(L))\cap {\rm Con}_{\I }(L)\subseteq {\rm At}({\rm Con}_{\I }(L))$;
\item\label{atlatbi2} ${\rm At}({\rm Con}_{\I }(L))=\{Cg_{L,\I }(\theta )\ |\ \theta \in {\rm At}({\rm Con}(L))\}=\{\theta \vee \theta ^{\prime }\ |\ \theta \in {\rm At}({\rm Con}(L))\}$ and, for any $\theta \in {\rm Con}(L)$, we have: $\theta \in {\rm At}({\rm Con}(L))$ iff $Cg_{L,\I }(\theta )=\theta \vee \theta ^{\prime }\in {\rm At}({\rm Con}_{\I }(L))$;
\item\label{atlatbi34} if $L$ is finite and non--trivial, then ${\rm At}({\rm Con}_{\BI }(L))\neq \emptyset $ and $\{Cg_{L,\BI }(a,b)\ |\ [a,b]\in {\rm Nrw}(L)\}\subseteq {\rm At}({\rm Con}_{\BI }(L))\subseteq \{Cg_{L,\BI }(a,b):a,b\in L,a\prec b\}$.\end{enumerate}\label{atlatbi}\end{lemma}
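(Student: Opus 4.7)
For part (i), I would argue directly: if $\alpha \in {\rm At}({\rm Con}(L)) \cap {\rm Con}_{\I}(L)$ and $\beta \in {\rm Con}_{\I}(L)$ satisfies $\Delta_L \subsetneq \beta \subseteq \alpha$, then $\beta$ is also a lattice congruence, so atomicity of $\alpha$ in ${\rm Con}(L)$ forces $\beta = \alpha$. Hence $\alpha$ is already an atom of ${\rm Con}_{\I}(L)$.

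For part (ii), my plan is to split the forward containment ${\rm At}({\rm Con}_{\I}(L)) \supseteq \{\theta \vee \theta' : \theta \in {\rm At}({\rm Con}(L))\}$ by cases on whether $\theta = \theta'$. Let $\theta \in {\rm At}({\rm Con}(L))$; by Remark \ref{primecglat}, $\theta'$ is also an atom of ${\rm Con}(L)$. If $\theta = \theta'$, then $\theta \in {\rm Con}_{\I}(L)$ by Lemma \ref{cginvlat}, and part (i) delivers $\theta = Cg_{L,\I}(\theta) \in {\rm At}({\rm Con}_{\I}(L))$. If $\theta \neq \theta'$, then both being atoms of ${\rm Con}(L)$ forces $\theta \cap \theta' = \Delta_L$. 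Now pick any $\sigma \in {\rm Con}_{\I}(L)$ with $\Delta_L \subsetneq \sigma \subseteq \theta \vee \theta'$: since lattices are congruence-distributive, $\sigma = \sigma \cap (\theta \vee \theta') = (\sigma \cap \theta) \vee (\sigma \cap \theta')$; each of $\sigma \cap \theta$ and $\sigma \cap \theta'$ lies below an atom of ${\rm Con}(L)$, so is $\Delta_L$ or the atom itself; the case both are $\Delta_L$ contradicts $\sigma \neq \Delta_L$, while $\sigma \cap \theta = \theta$ gives $\theta \subseteq \sigma$ and hence $\theta' \subseteq \sigma' = \sigma$, yielding $\sigma = \theta \vee \theta'$ (symmetrically for $\sigma \cap \theta' = \theta'$). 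By Lemma \ref{cgbi}, $\theta \vee \theta' = Cg_{L,\I}(\theta)$, which settles the forward containment and the $\Rightarrow$ direction of the 'iff'.

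For the reverse containment, given $\sigma \in {\rm At}({\rm Con}_{\I}(L))$, I must exhibit an atom $\theta$ of ${\rm Con}(L)$ with $\theta \vee \theta' = \sigma$. Observe that any nonzero $\theta \in {\rm Con}(L)$ with $\theta \subseteq \sigma$ satisfies $\theta \vee \theta' \subseteq \sigma$ (using $\sigma = \sigma'$), $\theta \vee \theta' \in {\rm Con}_{\I}(L)$, and $\theta \vee \theta' \supsetneq \Delta_L$, so by atomicity $\theta \vee \theta' = \sigma = Cg_{L,\I}(\theta)$. The \emph{main obstacle} is producing such a $\theta$ that is itself an atom of ${\rm Con}(L)$; this is guaranteed when ${\rm Con}(L)$ is atomic, in particular when $L$ is finite, by taking any atom beneath a nonzero principal congruence $Cg_L(a,b) \subseteq \sigma$ (which exists since $\sigma \supsetneq \Delta_L$).

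For part (iii), non-emptiness of ${\rm At}({\rm Con}_{\BI}(L))$ is immediate because ${\rm Con}_{\BI}(L) = {\rm Con}_{\I}(L)$ (adding the constants $0,1$ to the type imposes no new congruence condition, as noted in the preliminaries) is a finite nontrivial lattice. For any $[a,b] \in {\rm Nrw}(L)$, Lemma \ref{lgcze}(i) gives $Cg_L(a,b) \in {\rm At}({\rm Con}(L))$, so part (ii) together with Lemma \ref{cgbi} shows $Cg_{L,\BI}(a,b) = Cg_L(a,b) \vee Cg_L(a',b') \in {\rm At}({\rm Con}_{\BI}(L))$. Conversely, any $\sigma \in {\rm At}({\rm Con}_{\BI}(L))$ equals $\theta \vee \theta'$ for some atom $\theta$ of ${\rm Con}(L)$ by part (ii), and Lemma \ref{lgcze}(i) lets me write $\theta = Cg_L(a,b)$ for some $a \prec b$, so $\sigma = Cg_L(a,b) \vee Cg_L(a',b') = Cg_{L,\BI}(a,b)$, which lies in the desired set.
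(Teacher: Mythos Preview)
Your argument for part~(i) and for the forward containment in part~(ii) is correct; your direct use of distributivity, writing $\sigma = (\sigma \cap \theta) \vee (\sigma \cap \theta')$, is in fact cleaner than the paper's version, which instead argues by contradiction that a proper intermediate $\alpha$ would force an $M_3$ sublattice in ${\rm Con}(L)$.

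There is, however, a genuine gap in your treatment of the reverse containment ${\rm At}({\rm Con}_{\I}(L)) \subseteq \{\theta \vee \theta' : \theta \in {\rm At}({\rm Con}(L))\}$: you establish it only under the additional hypothesis that ${\rm Con}(L)$ is atomic, whereas the lemma is stated for arbitrary i--lattices, and ${\rm Con}(L)$ need not be atomic in general. The paper closes this without any atomicity assumption: given $\sigma \in {\rm At}({\rm Con}_{\I}(L))$ and any $\theta \in {\rm Con}(L)$ with $\Delta_L \subsetneq \theta \subsetneq \sigma$, one shows that $\theta$ is already an atom of ${\rm Con}(L)$. Indeed, suppose $\Delta_L \subsetneq \phi \subsetneq \theta$; as you observed, $\phi \vee \phi' = \sigma$, and $\theta \cap \theta' = \Delta_L$ since $\theta \cap \theta' \in {\rm Con}_{\I}(L)$ lies strictly below the atom $\sigma$. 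The paper then exhibits $\{\Delta_L, \phi, \theta, \phi', \sigma\} \cong N_5$, contradicting distributivity of ${\rm Con}(L)$. Your own distributive technique would finish this just as well: $\theta = \theta \cap (\phi \vee \phi') = \phi \vee (\theta \cap \phi') = \phi$, since $\theta \cap \phi' \subseteq \theta \cap \theta' = \Delta_L$, a contradiction. This same argument also supplies the $\Leftarrow$ direction of the ``iff'' in part~(ii), which you never address: if $\theta \in {\rm Con}(L)$ satisfies $\theta \vee \theta' \in {\rm At}({\rm Con}_{\I}(L))$, then $\theta \neq \Delta_L$, and the preceding forces $\theta \in {\rm At}({\rm Con}(L))$.

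Part~(iii) is fine once part~(ii) is fully established.
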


\begin{proof} (\ref{atlatbi6})  By Corollary \ref{cgred}, ${\rm Con}_{\I }(L)$ is a sublattice of ${\rm Con}(L)$, hence the inclusion in the enunciation.

\noindent (\ref{atlatbi2}) We will apply Lemma \ref{cgbi}.

Let $\theta \in {\rm At}({\rm Con}(L))$, so that $\theta ^{\prime }\in {\rm At}({\rm Con}(L)$, as well, and assume by absurdum that $\theta \vee \theta ^{\prime }\notin {\rm At}({\rm Con}_{\I }(L))$. If $\theta =\theta ^{\prime }$, then $\theta \in {\rm Con}_{\I }(L)$, so that $\theta \vee \theta ^{\prime }=\theta \in {\rm At}({\rm Con}_{\I }(L))$ by (\ref{atlatbi6}), and we have a contradiction. Hence $\theta \neq \theta ^{\prime }$, so that $\theta $ and $\theta ^{\prime }$ are incomparable. Since $\theta \in {\rm At}({\rm Con}(L)$, we have $\theta \neq \Delta _L$, thus $\theta \vee \theta ^{\prime }\neq \Delta _L$. Then there exists an $\alpha \in {\rm Con}_{\I}(L)$ such that $\Delta _L\subsetneq \alpha \subsetneq \theta \vee \theta ^{\prime }$. If $\theta \nsubseteq \alpha $, then $\theta ^{\prime }\nsubseteq \alpha $, because otherwise $\theta =\theta ^{\prime \prime }\leq \alpha ^{\prime }=\alpha $; since $\theta ,\theta ^{\prime }\in {\rm At}({\rm Con}(L)$, in this case $\alpha $ is incomparable to $\theta $ and to $\theta ^{\prime }$ and hence $\alpha \cap \theta =\alpha \cap \theta ^{\prime }=\Delta _L$, thus, since $\theta $ is incomparable to $\theta ^{\prime }$ and hence $\theta \cap \theta ^{\prime }=\Delta _L$, it follows that $M_3\cong \{\Delta _L,\alpha ,\theta ,\theta ^{\prime },\theta \vee \theta ^{\prime }\}$ is a sublattice of ${\rm Con}(L)$, contradicting the distributivity of ${\rm Con}(L)$. Therefore $\theta \leq \alpha $; but then $\theta ^{\prime }\leq \alpha ^{\prime }=\alpha $, so that $\theta \vee \theta ^{\prime }\leq \alpha $, which contradicts $\alpha \subsetneq \theta \vee \theta ^{\prime }$. Therefore $\theta \vee \theta ^{\prime }\in {\rm At}({\rm Con}_{\I }(L))$, so $\{\theta \vee \theta ^{\prime }:\theta \in {\rm At}({\rm Con}(L))\}\subseteq {\rm At}({\rm Con}_{\I }(L))$.

Now let $\alpha \in {\rm At}({\rm Con}_{\I }(L))\subseteq {\rm Con}_{\I }(L)\setminus \{\Delta _L\}$. If there exists no $\theta \in {\rm Con}(L)\setminus \{\Delta _L\}$ such that $\theta \subseteq \alpha $, then $\alpha \in {\rm At}({\rm Con}(L))$ and thus $\alpha =\alpha \vee \alpha ^{\prime }\in \{\theta \vee \theta ^{\prime }:\theta \in {\rm At}({\rm Con}(L))\}$. Otherwise, we have $\Delta _L\subsetneq \theta \subsetneq \alpha $ for some $\theta \in {\rm Con}(L)$, so that $\Delta _L\subsetneq \theta ^{\prime }\subsetneq \alpha ^{\prime }=\alpha $ and thus $\Delta _L\subsetneq \theta \vee \theta ^{\prime }\subseteq \alpha $. But $\theta \vee \theta ^{\prime }\in {\rm Con}_{\I }(L)$ and $\alpha \in {\rm At}({\rm Con}_{\I }(L))$, thus $\theta \vee \theta ^{\prime }=\alpha $. If $\theta \notin {\rm At}({\rm Con}(L))$, then $\Delta _L\subsetneq \phi \subsetneq \theta $ for some $\phi \in {\rm Con}(L)$, so that $\Delta _L\subsetneq \phi ^{\prime }\subsetneq \theta ^{\prime }$ and thus $\Delta _L\subsetneq \phi \vee \phi ^{\prime }\subseteq \theta \vee \theta ^{\prime }=\alpha $, so that $\phi \vee \phi ^{\prime }=\alpha $ since $\alpha \in {\rm At}({\rm Con}_{\I }(L))$. If $\theta =\theta ^{\prime }$, then $\alpha =\theta \vee \theta ^{\prime }=\theta \subsetneq \alpha $ and we have a contradiction. Hence $\theta \neq \theta ^{\prime }$, so that $\theta $ and $\theta ^{\prime }$ are incomparable, thus $\theta \cap \theta ^{\prime }\subsetneq \theta \subsetneq \theta \vee \theta ^{\prime }=\alpha $. But $\theta \cap \theta ^{\prime }\in {\rm Con}_{\I }(L)$ by Lemma \ref{cginvlat}, thus $\theta \cap \theta ^{\prime }=\Delta _L$ since $\alpha \in {\rm At}({\rm Con}_{\I }(L))$. Since $\phi \cap \phi ^{\prime }\subseteq \theta \cap \theta ^{\prime }$, it follows that $\phi \cap \phi ^{\prime }=\Delta _L$. We also have $\alpha =\phi \vee \phi ^{\prime }\subseteq \theta \vee \phi ^{\prime }\subseteq \theta \cap \theta ^{\prime }=\alpha $, thus $\theta \vee \phi ^{\prime }=\alpha $. Therefore $N_5\cong \{\Delta _L,\phi ,\phi ^{\prime },\theta ,\alpha \}$ is a sublattice of ${\rm Con}(L)$, contradicting the distributivity of ${\rm Con}(L)$. Therefore $\theta \in {\rm At}({\rm Con}(L))$, hence the converse of the inclusion above holds, as well. The above also show that, if $\theta \in {\rm Con}(L)$ is such that $\theta \vee \theta ^{\prime }\in {\rm At}({\rm Con}_{\I }(L))$, then $\theta \in {\rm At}({\rm Con}(L))$, since we can not have $\theta =\Delta _L$.

\noindent (\ref{atlatbi34}) By (\ref{atlatbi2}) and Lemma \ref{lgcze}.\end{proof}

\begin{proposition} If $L$ is an i--lattice, then:\begin{itemize}
\item $L$ is subdirectly irreducible iff $|{\rm At}({\rm Con}_{\I }(L))|=1$ iff ${\rm At}({\rm Con}(L))=\{\alpha ,\alpha ^{\prime }\}$ for some $\alpha \in {\rm Con}(L)$;
\item if $L$ is finite and non--trivial, then: $L$ is subdirectly irreducible iff ${\rm At}({\rm Con}_{\BI }(L))=\{Cg_{L,\BI }(a,b)\}$ for some $a,b\in L$ such that $a\prec b$ iff ${\rm At}({\rm Con}(L))=\{Cg_L(a,b),Cg_L(a^{\prime },b^{\prime })\}$ for some $a,b\in L$ such that $a\prec b$ iff ${\rm Nrw}(L)\subseteq \{[a,b],[b^{\prime },a^{\prime }]\}$ for some $a,b\in L$.\end{itemize}\label{bisi}\end{proposition}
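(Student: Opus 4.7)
The plan is to establish the first item by a direct three-way equivalence, then derive the second item from the first by specialization. For the first item, subdirect irreducibility of $L$ amounts to ${\rm Con}_{\I }(L)\setminus \{\Delta _L\}$ having a least element (the monolith), which must then be the unique atom of ${\rm Con}_{\I }(L)$; conversely, given $|{\rm At}({\rm Con}_{\I }(L))|=1$ with sole atom $\mu $, I would argue that every nontrivial $\theta \in {\rm Con}_{\I }(L)$ satisfies $\mu \subseteq \theta $, using that $\mu \cap \theta \in {\rm Con}_{\I }(L)$ together with $\mu $ being the only atom. For the link with ${\rm At}({\rm Con}(L))=\{\alpha ,\alpha ^{\prime }\}$, the reverse direction is immediate from Lemma \ref{atlatbi}(\ref{atlatbi2}), which gives ${\rm At}({\rm Con}_{\I }(L))=\{\alpha \vee \alpha ^{\prime }\}$, a singleton. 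For the forward direction, if $\mu $ is the unique atom of ${\rm Con}_{\I }(L)$, the same lemma forces every atom $\theta $ of ${\rm Con}(L)$ to satisfy $\theta \vee \theta ^{\prime }=\mu $; for any two such atoms $\sigma ,\theta $, the distributivity of ${\rm Con}(L)$ from Corollary \ref{cgred} gives $\sigma =\sigma \cap \mu =(\sigma \cap \theta )\vee (\sigma \cap \theta ^{\prime })$, and since distinct atoms of a distributive lattice meet in the bottom this forces $\sigma \in \{\theta ,\theta ^{\prime }\}$.

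For the second item, since the types of $\BI $ and $\I $ differ only by the constants $0,1$, we have ${\rm Con}_{\BI }(L)={\rm Con}_{\I }(L)$, so the first item already yields the equivalence between subdirect irreducibility of $L$, $|{\rm At}({\rm Con}_{\BI }(L))|=1$, and ${\rm At}({\rm Con}(L))=\{\alpha ,\alpha ^{\prime }\}$ for some $\alpha $. To produce the more concrete forms appearing in the statement, Lemma \ref{atlatbi}(\ref{atlatbi34}) writes any atom of ${\rm Con}_{\BI }(L)$ as $Cg_{L,\BI }(a,b)$ with $a\prec b$, Lemma \ref{lgcze}(\ref{lgcze1}) writes any atom of ${\rm Con}(L)$ as $Cg_L(a,b)$ with $a\prec b$, and Lemma \ref{cgbi} connects them through $Cg_{L,\BI }(a,b)=Cg_L(a,b)\vee Cg_L(a^{\prime },b^{\prime })$; combining these shows that the condition $|{\rm At}({\rm Con}_{\BI }(L))|=1$ translates into ${\rm At}({\rm Con}_{\BI }(L))=\{Cg_{L,\BI }(a,b)\}$ for some $a\prec b$, and the atom set of ${\rm Con}(L)$ unfolds into $\{Cg_L(a,b),Cg_L(a^{\prime },b^{\prime })\}$.

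For the final equivalence with ${\rm Nrw}(L)$, the forward direction is the cleaner one: assuming $L$ subdirectly irreducible with ${\rm At}({\rm Con}(L))=\{Cg_L(a,b),Cg_L(a^{\prime },b^{\prime })\}$, any $[c,d]\in {\rm Nrw}(L)$ produces an atom $Cg_L(c,d)$ by Lemma \ref{lgcze}(\ref{lgcze1}), so $Cg_L(c,d)\in \{Cg_L(a,b),Cg_L(a^{\prime },b^{\prime })\}$; Lemma \ref{lgcze}(\ref{lgcze3}) says $L/Cg_L(c,d)$ has $\{c,d\}$ as its only non-singleton class, which must contain $(a,b)$ or $(a^{\prime },b^{\prime })$ depending on the case, forcing $\{c,d\}\in \{\{a,b\},\{a^{\prime },b^{\prime }\}\}$ and hence $[c,d]\in \{[a,b],[b^{\prime },a^{\prime }]\}$. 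The converse direction, deducing subdirect irreducibility from the containment ${\rm Nrw}(L)\subseteq \{[a,b],[b^{\prime },a^{\prime }]\}$, is the main obstacle I expect, since atoms of ${\rm Con}(L)$ need not all arise from narrows and the hypothesis only constrains the narrows themselves. Overcoming this should require combining the $^{\prime }$-invariance of ${\rm Nrw}(L)$ (from the remark preceding Lemma \ref{cgbi}) with a closer analysis of how $^{\prime }$ pairs up atoms of ${\rm Con}(L)$, most likely via the contrapositive: any covering pair $a\prec b$ for which $Cg_L(a,b)$ is an atom but $[a,b]$ is not a narrow would need to produce a separate $^{\prime }$-orbit of atoms, ultimately yielding a narrow outside $\{[a,b],[b^{\prime },a^{\prime }]\}$.
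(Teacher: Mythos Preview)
Your argument for the first item and for all but the last equivalence of the second is correct, and in the key step---deducing $\sigma \in \{\theta ,\theta ^{\prime }\}$ for any atom $\sigma $ of ${\rm Con}(L)$ via $\sigma =\sigma \cap (\theta \vee \theta ^{\prime })=(\sigma \cap \theta )\vee (\sigma \cap \theta ^{\prime })$---it is actually cleaner than the paper's, which reaches the same conclusion through a longer case analysis exhibiting $N_5$ as a forbidden sublattice of ${\rm Con}(L)$. One small caveat you share with the paper: for arbitrary (possibly infinite) $L$, passing from $|{\rm At}({\rm Con}_{\I }(L))|=1$ back to subdirect irreducibility needs that every nonzero element of ${\rm Con}_{\I }(L)$ dominates an atom; your sketch ``$\mu \cap \theta $ together with $\mu $ being the only atom'' does not by itself rule out a nonzero $\theta $ with $\mu \cap \theta =\Delta _L$. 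The paper also treats this step as a recall.

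You are right to flag the final implication, from ${\rm Nrw}(L)\subseteq \{[a,b],[b^{\prime },a^{\prime }]\}$ back to subdirect irreducibility, as the obstacle---and your proposed contrapositive route cannot succeed, because this implication is in fact false as written. Take $L={\cal L}_2^2$ with the Boolean complement as involution: no covering pair $x\prec y$ has $x$ meet--irreducible and $y$ join--irreducible, so ${\rm Nrw}(L)=\emptyset $ and the narrows condition holds vacuously for any $a,b$; yet ${\rm Con}_{\BI }(L)={\rm Con}(L)\cong {\cal L}_2^2$ has two atoms, so $L$ is not subdirectly irreducible. The paper's own proof does not address this direction either: it simply invokes Lemma~\ref{lgcze}, but that lemma only supplies the inclusion $\{Cg_L(c,d)\ |\ [c,d]\in {\rm Nrw}(L)\}\subseteq {\rm At}({\rm Con}(L))$, not the reverse containment one would need here. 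Your forward direction for the narrows clause is therefore fine and matches what the paper establishes; the converse is a gap in the statement of the proposition itself rather than in your attempt.
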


\begin{proof} We apply Lemma \ref{atlatbi}, (\ref{atlatbi2}). Recall that $L$ is subdirectly irreducible iff ${\rm Con}_{\BI }(L)$ has a single atom iff, for all $\alpha ,\beta \in {\rm At}({\rm Con}(L))$, $\alpha \vee \alpha ^{\prime }=\beta \vee \beta ^{\prime }$; now let us prove that the latter property is equivalent to ${\rm At}({\rm Con}(L))=\{\theta ,\theta ^{\prime }\}$ for some $\theta \in {\rm At}({\rm Con}(L))$. The converse implication follows from the fact that $\theta \vee \theta ^{\prime }=\theta ^{\prime }\vee \theta ^{\prime \prime }$ for all $\theta \in {\rm Con}(L)$. For the direct implication, let us consider $\alpha ,\beta \in {\rm At}({\rm Con}(L))$ such that $\alpha \vee \alpha ^{\prime }=\beta \vee \beta ^{\prime }$. If $\alpha \in \{\beta ,\beta ^{\prime }\}$, then $\alpha ^{\prime }\in \{\beta ,\beta ^{\prime }\}$, as well, so $\{\alpha ,\alpha ^{\prime }\}=\{\beta ,\beta ^{\prime }\}$. Now assume that $\alpha \notin \{\beta ,\beta ^{\prime }\}$, case in which we can have neither $\alpha =\alpha ^{\prime }$, nor $\beta =\beta ^{\prime }$, because then either $\alpha =\alpha \vee \alpha ^{\prime }=\beta \vee \beta ^{\prime }=\beta $ or $\alpha \subsetneq \alpha \vee \alpha ^{\prime }=\beta \vee \beta ^{\prime }=\beta $ or $\beta \subsetneq \beta \vee \beta ^{\prime }=\alpha \vee \alpha ^{\prime }=\alpha $, contradicting the facts that $\alpha \notin \{\beta ,\beta ^{\prime }\}$, $\beta \in {\rm At}({\rm Con}(L))$, respectively $\alpha \in {\rm At}({\rm Con}(L))$. Then we can not have $\alpha \vee \beta =\beta \vee \beta ^{\prime }$, because this would imply $\alpha \vee \beta ^{\prime }=(\alpha \vee \beta ^{\prime })\cap (\alpha \vee \beta \vee \beta ^{\prime })=(\alpha \vee \beta ^{\prime })\cap (\beta \vee \beta ^{\prime })=(\alpha \cap \beta )\vee \beta ^{\prime }=\Delta _L\vee \beta ^{\prime }=\beta ^{\prime }$, which contradicts the fact that $\alpha \nsubseteq \beta ^{\prime }$ since $\alpha $ and $\beta ^{\prime }$ are different atoms of ${\rm Con}(L)$. But $\alpha \subset \alpha \vee \alpha ^{\prime }=\beta \vee \beta ^{\prime }\supset \beta $, hence $\alpha \vee \beta \subseteq \beta \vee \beta ^{\prime }$, thus $\alpha \vee \beta \subsetneq \beta \vee \beta ^{\prime }$ since $\alpha \vee \beta \neq \beta \vee \beta ^{\prime }$, and $\alpha \vee \beta \vee \beta ^{\prime }=\beta \vee \beta ^{\prime }=\alpha \vee \alpha ^{\prime }$. Also, $(\alpha \vee \beta )\cap \beta ^{\prime }=(\alpha \cap \beta ^{\prime })\vee (\beta \cap \beta ^{\prime })=\Delta _L\vee \Delta _L=\Delta _L$ and, of course, $\beta \subsetneq \alpha \vee \beta $. Therefore $N_5\cong \{\Delta _L,\beta ,\beta ^{\prime },\alpha \vee \beta ,\beta \vee \beta ^{\prime }\}$ is a sublattice of ${\rm Con}(L)$, contradicting the distributivity of ${\rm Con}(L)$. Hence, by Lemma \ref{lgcze}, if $L$ is finite and non--trivial, then: $L$ is subdirectly irreducible iff ${\rm At}({\rm Con}(L))=\{\theta ,\theta ^{\prime }\}$ for some $\theta \in {\rm At}({\rm Con}(L))$ iff ${\rm At}({\rm Con}(L))=\{Cg_L(a,b),Cg_L(a^{\prime },b^{\prime })\}$ for some $a,b\in L$ with $a\prec b$, iff ${\rm At}({\rm Con}_{\BI }(L))=\{Cg_{L,\BI }(a,b)\}$ for some $a,b\in L$ with $a\prec b$, iff, for some $a,b\in L$, ${\rm Nrw}(L)\subseteq \{[a,b],[b^{\prime },a^{\prime }]\}$.\end{proof}

\begin{remark} By Lemma \ref{01irred}, if $L$ is a finite BZ--lattice with the $0$ meet--irreducible, then $L$ is an antiortholattice with the $0$ strictly meet--irreducible, thus also the $1$ strictly join--irreducible, so that $L={\cal L}_2\oplus K\oplus {\cal L}_2$ for some pseudo--Kleene algebra $K$. In this case, obviously, $0^{+}$ is join--irreducible and $1^{-}$ is meet--irreducible in $L$, thus $[0,0^{+}],[1^{-},1]\in {\rm Nrw}(L)$.\label{lsiiffksi}\end{remark}

\begin{corollary} If $L$ is a finite BZ--lattice with $0$ meet--irreducible, then: $L$ is subdirectly irreducible iff ${\rm At}({\rm Con}(L))=\{Cg_L(0,0^{+}),Cg_L(a,b),Cg_(a^{\prime },b^{\prime }),Cg_L(1^{-},1)\}$ for some $a,b\in L\setminus \{0,1\}$ such that $a\prec b$ iff ${\rm At}({\rm Con}_{\BI }(L))=\{Cg_{L,\BI }(0,0^{+}),Cg_{L,\BI }(a,b)\}$ for some $a,b\in L\setminus \{0,1\}$ such that $a\prec b$ iff ${\rm Nrw}(L)\subseteq \{[0,0^{+}],[a,b],$\linebreak $[b^{\prime },a^{\prime }],[1^{-},1]\}$ for some $a,b\in L$.\end{corollary}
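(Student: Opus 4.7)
The plan is to reduce the corollary to Proposition~\ref{bisi} applied to the ``middle'' factor of $L$, and then to lift each of the three equivalences across the ${\cal L}_2\oplus \cdot \oplus {\cal L}_2$ scaffolding. By Remark~\ref{lsiiffksi}, one can write $L={\cal L}_2\oplus K\oplus {\cal L}_2$ for some finite pseudo--Kleene algebra $K$, with $0^{+}=0_K$, $1^{-}=1_K$, and $[0,0^{+}],[1^{-},1]\in {\rm Nrw}(L)$. Remark~\ref{lksi} gives ${\rm Con}_{\BZ}(L)\cong {\rm Con}_{\BI}(K)\oplus {\cal L}_2$, so the subdirect irreducibility of $L$ as a BZ--lattice will be equivalent to that of $K$ as a bi--lattice, to which Proposition~\ref{bisi} applies (the degenerate case $K={\cal L}_1$, i.e.\ $L={\cal L}_3$, can be checked by hand).

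The bulk of the work will consist in two lifting identities, which I would carry out first. For narrows, I will argue that ${\rm Nrw}(L)={\rm Nrw}(K)\cup \{[0,0^{+}],[1^{-},1]\}$: the boundary narrows are given by Remark~\ref{lsiiffksi}; a narrow of $L$ lying inside $K$ is equivalently a narrow of $K$, since covers and strict meet/join--irreducibility of any element of $K$ are unchanged when $0$ is adjoined below and $1$ above; and no other narrow of $L$ can touch $\{0,1\}$, because in $L$ the element $0$ has $0^{+}$ as its only successor and $1$ has $1^{-}$ as its only predecessor. For atoms of ${\rm Con}(L)$, I will split on whether $0/\theta =\{0\}$ and $1/\theta =\{1\}$: if not, then $\theta $ must contain $Cg_L(0,0^{+})$ or $Cg_L(1^{-},1)$ by minimality, forcing $\theta $ to equal one of these; both of them are atoms, collapsing only the respective boundary narrow. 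The remaining atoms of ${\rm Con}(L)$ will fix $0$ and $1$, so they lie in ${\rm Con}_{01}(L)\cong {\rm Con}(K)$ via Remark~\ref{lksi} and are of the form $Cg_L(x,y)$ for $x,y\in K$ with $x\prec y$ and $Cg_K(x,y)\in {\rm At}({\rm Con}(K))$.

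Lemma~\ref{atlatbi}(\ref{atlatbi2}) together with Lemma~\ref{cgbi} will then transfer the atom decomposition to the bi--lattice level: the two involution--conjugate boundary atoms $Cg_L(0,0^{+})$ and $Cg_L(1^{-},1)=(Cg_L(0,0^{+}))^{\prime }$ fuse into the single atom $Cg_{L,\BI}(0,0^{+})=Cg_L(0,0^{+})\vee Cg_L(1^{-},1)$ of ${\rm Con}_{\BI}(L)$, and the rest of ${\rm At}({\rm Con}_{\BI}(L))$ comes from ${\rm Con}_{\BI 01}(L)\cong {\rm Con}_{\BI}(K)$. Plugging these three decompositions into the three equivalences of Proposition~\ref{bisi} applied to $K$ will yield the four equivalences in the corollary, since any witnesses $a,b\in K$ with $a\prec b$ supplied by Proposition~\ref{bisi} automatically lie in $L\setminus \{0,1\}$. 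The main obstacle will not be a deep step but the careful bookkeeping at the two ordinal--sum junctions needed to guarantee that no extraneous narrow or atom of $L$ is overlooked; the case split on $0/\theta $ and $1/\theta $ described above will handle this, but has to be carried out exhaustively.
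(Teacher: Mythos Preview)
Your proposal is correct and follows exactly the paper's route: decompose $L={\cal L}_2\oplus K\oplus {\cal L}_2$ via Remark~\ref{lsiiffksi}, reduce subdirect irreducibility of $L$ to that of $K$ via Remark~\ref{lksi}, and apply Proposition~\ref{bisi} to $K$. The lifting identities you plan to verify (${\rm Nrw}(L)={\rm Nrw}(K)\cup\{[0,0^{+}],[1^{-},1]\}$ and the corresponding decomposition of ${\rm At}({\rm Con}(L))$ and ${\rm At}({\rm Con}_{\BI}(L))$) are precisely what the paper's one-line proof leaves implicit; your case split on $0/\theta$ and $1/\theta$ can in fact be shortened by reading the atoms off the ordinal-sum isomorphism ${\rm Con}(L)\cong{\cal L}_2\times{\rm Con}(K)\times{\cal L}_2$ stated in the preliminaries.
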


\begin{proof} By Remarks \ref{lksi} and \ref{lsiiffksi} and Proposition \ref{bisi}.\end{proof}

\section{The Largest Numbers of Congruences of Finite i--lattices}

\begin{remark} Let $L$ be a finite i--lattice and $a,b\in L$. Then, by Lemma \ref{cgbi} and Theorem \ref{cgeq}, $Cg_{L,\BI }(a,b)=Cg_L(a,b)\vee Cg_L(a^{\prime },b^{\prime })=eq(L/Cg_L(a,b))\vee eq(L/Cg_L(a^{\prime },b^{\prime }))=eq(L/Cg_L(a,b)\vee L/Cg_L(a^{\prime },b^{\prime }))$, where the latter join is the join in the lattice ${\rm Part}(L)$.\vspace*{-12pt}

\begin{center}\begin{tabular}{ccccccc}
\hspace*{-45pt}
\begin{picture}(40,40)(0,0)
\put(20,0){\circle*{3}}
\put(20,10){\circle*{3}}
\put(20,0){\line(0,1){10}}
\put(9,-8){$a=b^{\prime }$}
\put(9,13){$b=a^{\prime }$}
\end{picture} &\hspace*{-20pt}
\begin{picture}(40,40)(0,0)
\put(20,0){\circle*{3}}
\put(20,10){\circle*{3}}
\put(20,20){\circle*{3}}
\put(20,0){\line(0,1){20}}
\put(18,-8){$a$}
\put(22,7){$b=b^{\prime }$}
\put(18,23){$a^{\prime }$}
\end{picture} &\hspace*{2pt}
\begin{picture}(40,40)(0,0)
\put(20,0){\circle*{3}}
\put(10,10){\circle*{3}}
\put(30,10){\circle*{3}}
\put(20,20){\circle*{3}}
\put(20,0){\line(-1,1){10}}
\put(20,20){\line(1,-1){10}}
\put(18,-8){$a$}
\put(3,8){$b$}
\put(32,7){$b^{\prime }$}
\put(18,23){$a^{\prime }$}
\end{picture} &
\begin{picture}(40,40)(0,0)
\put(20,0){\circle*{3}}
\put(10,10){\circle*{3}}
\put(30,10){\circle*{3}}
\put(20,20){\circle*{3}}
\put(20,0){\line(1,1){10}}
\put(20,0){\line(-1,1){10}}
\put(20,20){\line(1,-1){10}}
\put(20,20){\line(-1,-1){10}}
\put(18,-8){$a$}
\put(3,8){$b$}
\put(32,7){$b^{\prime }$}
\put(18,23){$a^{\prime }$}
\end{picture} &\hspace*{-5pt}
\begin{picture}(40,40)(0,0)
\put(20,0){\circle*{3}}
\put(10,10){\circle*{3}}
\put(30,10){\circle*{3}}
\put(20,20){\circle*{3}}
\put(20,0){\line(1,1){10}}
\put(20,0){\line(-1,1){10}}
\put(20,20){\line(1,-1){10}}
\put(20,20){\line(-1,-1){10}}
\put(18,-10){$b^{\prime }$}
\put(3,8){$a^{\prime }$}
\put(33,7){$a$}
\put(18,23){$b$}
\end{picture} &\hspace*{-5pt}
\begin{picture}(40,40)(0,0)
\put(20,0){\circle*{3}}
\put(10,10){\circle*{3}}
\put(30,10){\circle*{3}}
\put(20,20){\circle*{3}}
\put(40,20){\circle*{3}}
\put(30,30){\circle*{3}}
\put(20,0){\line(-1,1){10}}
\put(20,20){\line(1,-1){10}}
\put(10,10){\line(1,1){20}}
\put(20,0){\line(1,1){20}}
\put(30,30){\line(1,-1){10}}
\put(18,-8){$a$}
\put(33,7){$c$}
\put(3,8){$b$}
\put(-20,19){$b\vee c=c^{\prime }$}
\put(42,17){$b^{\prime }$}
\put(28,33){$a^{\prime }$}
\end{picture} &\hspace*{-5pt}
\begin{picture}(40,40)(0,0)
\put(20,0){\circle*{3}}
\put(10,10){\circle*{3}}
\put(30,10){\circle*{3}}
\put(20,20){\circle*{3}}
\put(40,20){\circle*{3}}
\put(30,30){\circle*{3}}
\put(20,0){\line(-1,1){10}}
\put(20,20){\line(1,-1){10}}
\put(10,10){\line(1,1){20}}
\put(20,0){\line(1,1){20}}
\put(30,30){\line(1,-1){10}}
\put(18,-10){$b^{\prime }$}
\put(33,7){$c^{\prime }=a\wedge c$}
\put(3,8){$a^{\prime }$}
\put(13,19){$c$}
\put(43,17){$a$}
\put(28,33){$b$}
\end{picture}
\end{tabular}\end{center}\vspace*{2pt}

Assume that $[a,b]\in {\rm Nrw}(L)$, so that $[b^{\prime },a^{\prime }]\in {\rm Nrw}(L)$, as well, thus, according to Lemma \ref{lgcze}, $Cg_L(a,b)=eq(\{\{a,b\}\}\cup \{\{x\}\ |\ x\in L\setminus \{a,b\}\})$ and $Cg_L(a^{\prime },b^{\prime })=eq(\{\{a^{\prime },b^{\prime }\}\}\cup \{\{y\}\ |\ y\in L\setminus \{a^{\prime },b^{\prime }\}\})$, hence $Cg_{L,\BI }(a,b)=Cg_L(a,b)=eq((\{\{a,b\}\}\cup \{\{x\}\ |\ x\in L\setminus \{a,b\}\})\vee (\{\{a^{\prime },b^{\prime }\}\}\cup \{\{y\}\ |\ y\in L\setminus \{a^{\prime },b^{\prime }\}\}))$, and we are in one of the following situations:\begin{itemize}
\item $a=b^{\prime }$, so $b=a^{\prime }$, as in the first diagram above, hence $Cg_L(a,b)=Cg_L(a^{\prime },b^{\prime })$, therefore $Cg_{L,\BI }(a,b)=eq((\{\{a,b\}\}\cup \{\{x\}\ |\ x\in L\setminus \{a,b\}\})$, so that $|L/Cg_{L,\BI }(a,b)|=|L|-1$;
\item $a\prec b=b^{\prime }\prec a^{\prime }$, as in the second diagram above, hence $Cg_{L,\BI }(a,b)=eq((\{\{a,b,a^{\prime }\}\}\cup \{\{x\}\ |\ x\in L\setminus \{a,b,a^{\prime }\}\})$, so that $|L/Cg_{L,\BI }(a,b)|=|L|-2$;
\item $\{a,b\}\cap \{a^{\prime },b^{\prime }\}=\emptyset $, as in the third diagram above, hence $Cg_{L,\BI }(a,b)=eq((\{\{a,b\},\{a^{\prime },b^{\prime }\}\}\cup \{\{x\}\ |\ x\in L\setminus \{a,b,a^{\prime },b^{\prime }\}\})$, so that $|L/Cg_{L,\BI }(a,b)|=|L|-2$.\end{itemize}

Now assume that $[a,b]\notin {\rm Nrw}(L)$, but $a\prec b$ and $|L/Cg_{L,\BI }(a,b)|=|L|-2$. Assume, for instance, that $a$ is meet--reducible; the case when $b$ is join--reducible follows by duality. Since $\cdot ^{\prime }:L\rightarrow L$ is a dual lattice isomorphism, it follows that $b^{\prime }\prec a^{\prime }$ and $a^{\prime }$ is join--reducible. 

Then, since $L$ is finite, there exists a $c\in L\setminus \{b\}$ with $a\prec b$, so that $b\nleq c$ and $c\nleq b$, thus $b<b\vee c$ and $c<b\vee c$. But $a/Cg_L(a,b)=b/Cg_L(a,b)$ and thus $c/Cg_L(a,b)=(a\vee c)/Cg_L(a,b)=(b\vee c)/Cg_L(a,b)$, hence $|L|-2\geq |L/Cg_L(a,b)|\geq |L/Cg_{L,\BI }(a,b)|=|L|-2$, therefore $|L/Cg_L(a,b)|=|L|-2=|L/Cg_{L,\BI }(a,b)|$ and thus, since $Cg_L (a,b)\subseteq Cg_{L,\BI }(a,b)$, we have $Cg_L (a,b)=Cg_{L,\BI }(a,b)$ and hence $Cg_L(a,b)\vee Cg_L(a^{\prime },b^{\prime })=Cg_{L,\BI }(a,b)=\! Cg_L(a,b)=eq(\{\{a,b\},\{c,b\vee c\}\}\cup \{\{x\}\ |\ x\in L\setminus \{a,b,c,b\vee c\}\})$ by Lemma \ref{lgcze}, which also ensures us that $b\prec b\vee c$, $c\prec b\vee c$ and $a/Cg_L(a,b)=b/Cg_L(a,b)\prec c/Cg_L(a,b)=(b\vee c)/Cg_L(a,b)$.

By the above, $Cg_L(a,b)\vee Cg_L(a^{\prime },b^{\prime })=Cg_L(a,b)$, hence $Cg_L(a,b)^{\prime }=Cg_L(a^{\prime },b^{\prime })\subseteq Cg_L(a,b)$, thus $Cg_L(a,b)=Cg_L(a,b)^{\prime \prime }\subseteq Cg_L(a,b)^{\prime }$, so $Cg_L(a,b)^{\prime }=Cg_L(a,b)=eq(\{\{a,b\},\{c,b\vee c\}\}\cup \{\{x\}\ |\ x\in L\setminus \{a,b,c,b\vee c\}\})$. But $a^{\prime }/Cg_L(a^{\prime },b^{\prime })=b^{\prime }/Cg_L(a^{\prime },b^{\prime })$, thus $a^{\prime }/Cg_L(a^{\prime },b^{\prime })$ is not a singleton, which means that $a^{\prime }/Cg_L(a^{\prime },b^{\prime })=\{a^{\prime },b^{\prime }\}=\{c,b\vee c\}$, thus $a^{\prime }=b\vee c$ and $b^{\prime }=c$ since $b ^{\prime }<a^{\prime }$. Therefore, in this case, $Cg_{L,\BI }(a,b)=Cg_L(a,b)= Cg_L(a^{\prime },b^{\prime })=eq((\{\{a,b\},\{a^{\prime },b^{\prime }\}\}\cup \{\{x\}\ |\ x\in L\setminus \{a,b,a^{\prime },b^{\prime }\}\})$ and hence $\{a,b,b^{\prime },a^{\prime }\}\cong {\cal L}_2^2$ is a convex sublattice of $L$, as in the fourth diagram above. The case dual to this one is represented in the fifth diagram above.

Finally, assume that $a\prec b$ and $|L/Cg_{L,\BI }(a,b)|=|L|-3$, which, by the above, can not hold if $[a,b]$ is a narrows. Hence $[a,b]$ is not a narrows, thus, as above, we can assume that $a$ is meet--reducible, so that $a\prec c$ for some $c\in L\setminus \{b\}$, so $b<b\vee c$ and $c<b\vee c$, thus $(b\vee c)^{\prime }<b^{\prime }$ and $(b\vee c)^{\prime }<c^{\prime }$. Since $a/Cg_{L,\BI }(a,b)=b/Cg_{L,\BI }(a,b)$, $a^{\prime }/Cg_{L,\BI }(a,b)=b^{\prime }/Cg_{L,\BI }(a,b)$, $c/Cg_{L,\BI }(a,b)=(b\vee c)/Cg_{L,\BI }(a,b)$ and $c^{\prime }/Cg_{L,\BI }(a,b)=(b\vee c)^{\prime }/Cg_{L,\BI }(a,b)$, $Cg_{L,\BI }(a,b)=eq(L/Cg_L(a,b)\vee L/Cg_L(a^{\prime },b^{\prime }))$ and $|L/Cg_{L,\BI }(a,b)|=|L|-3$, it follows that $a/Cg_{L,\BI }(a,b)=\{a,b\}\neq c/Cg_{L,\BI }(a,b) =\{c,b\vee c\}=\{c^{\prime },(b\vee c)^{\prime }\}\neq a^{\prime }/Cg_{L,\BI }(a,b)=\{a^{\prime },b^{\prime }\}$, hence $c^{\prime }=b\vee c$ and $|a/Cg_{L,\BI }(a,b)|=|c/Cg_{L,\BI }(a,b)|=|a^{\prime }/Cg_{L,\BI }(a,b)|=2$, so that $c\prec b\vee c$ and thus $a/Cg_{L,\BI }(a,b)=b/Cg_{L,\BI }(a,b)\prec c/Cg_{L,\BI }(a,b)=(b\vee c)/Cg_{L,\BI }(a,b)\prec a^{\prime }/Cg_{L,\BI }(a,b)=b^{\prime }/Cg_{L,\BI }(a,b)$, and thus $Cg_{L,\BI }(a,b)=Cg_L(a,b)=Cg_L(a^{\prime },b^{\prime })=eq(\{\{a,b\},\{c,c^{\prime }\},\{b^{\prime },a^{\prime }\}\}\cup \{\{x\}\ |\ x\in L\setminus \{a,b,c,a^{\prime },b^{\prime },c^{\prime }\}\})$, hence $\{a,b,c,a^{\prime },b^{\prime },c^{\prime }\}\cong {\cal L}_2\times {\cal L}_3$ is a convex sublattice of $L$, as in the sixth diagram above. The case dual to this one is represented in the rightmost diagram above.\label{narrows}\end{remark}

\begin{theorem} Let $n\in \N ^*$ and $L$ be an i--lattice with $|L|=n$. Then:\begin{itemize}

\item $|{\rm Con}_{\BI }(L)|\leq 2^{\lfloor n/2\rfloor }$;

\item $|{\rm Con}_{\BI }(L)|=2^{\lfloor n/2\rfloor }$ iff ${\rm Con}_{\BI }(L)\cong {\cal L}_2^{\lfloor n/2\rfloor }$ iff either $L\cong {\cal L}_n$ or $n\in 2(\N \setminus \{0,1\})$ and $L\cong _{\BI }{\cal L}_{n/2-1}\oplus {\cal L}_2^2\oplus {\cal L}_{n/2-1}$ iff either $|{\rm Con}(L)|=2^{n-1}$ or $n\geq 2$, $L\ncong _{\BI }{\cal L}_{\lfloor n/2\rfloor -1}\oplus ({\cal L}_3\boxplus {\cal L}_3)\oplus {\cal L}_{\lfloor n/2\rfloor -1}$ and $|{\rm Con}(L)|=2^{n-2}$ iff either ${\rm Con}(L)\cong {\cal L}_2^{n-1}$ or $n\geq 2$, $L\ncong _{\BI }{\cal L}_{\lfloor n/2\rfloor -1}\oplus ({\cal L}_3\boxplus {\cal L}_3)\oplus {\cal L}_{\lfloor n/2\rfloor -1}$ and ${\rm Con}(L)\cong {\cal L}_2^{n-2}$;

\item if $L$ is a pseudo--Kleene algebra, then: $|{\rm Con}_{\BI }(L)|=2^{\lfloor n/2\rfloor }$ iff ${\rm Con}_{\BI }(L)\cong {\cal L}_2^{\lfloor n/2\rfloor }$ iff either $L\cong {\cal L}_n$ or $n\in 2(\N \setminus \{0,1\})$ and $L\cong {\cal L}_{n/2-1}\oplus {\cal L}_2^2\oplus {\cal L}_{n/2-1}$ iff $|{\rm Con}(L)|\in \{2^{n-1},2^{n-2}\}$ iff either ${\rm Con}(L)\cong {\cal L}_2^{n-1}$ or $n\geq 2$ and ${\rm Con}(L)\cong {\cal L}_2^{n-2}$;

\item $|{\rm Con}_{\BI }(L)|<2^{\lfloor n/2\rfloor }$ iff either $n\in 2(\N \setminus \{0,1\})$ and $L\cong _{\BI }{\cal L}_{n/2-1}\oplus ({\cal L}_3\boxplus {\cal L}_3)\oplus {\cal L}_{n/2-1}$ or $|{\rm Con}(L)|<2^{n-2}$;

\item if $L$ is a pseudo--Kleene algebra, then: $|{\rm Con}_{\BI }(L)|<2^{\lfloor n/2\rfloor }$ iff $|{\rm Con}(L)|<2^{n-2}$ iff $|{\rm Con}(L)|\leq 2^{n-3}$;

\item if ${\rm Con}_{\BI }(L)$ is a Boolean algebra, in particular if ${\rm Con}(L)$ is a Boolean algebra, in particular if $L$ is a modular i--lattice, then: $|{\rm Con}_{\BI }(L)|<2^{\lfloor n/2\rfloor }$ iff $|{\rm Con}_{\BI }(L)|\leq 2^{\lfloor n/2\rfloor -1}$ iff either $n\in 2(\N \setminus \{0,1\})$ and $L\cong _{\BI }{\cal L}_{n/2-1}\oplus ({\cal L}_3\boxplus {\cal L}_3)\oplus {\cal L}_{n/2-1}$ or $|{\rm Con}(L)|<2^{n-2}$ iff either $n\in 2(\N \setminus \{0,1\})$ and $L\cong _{\BI }{\cal L}_{n/2-1}\oplus ({\cal L}_3\boxplus {\cal L}_3)\oplus {\cal L}_{n/2-1}$ or $|{\rm Con}(L)|\leq 2^{n-3}$;

\item if $L$ is a pseudo--Kleene algebra such that ${\rm Con}_{\BI }(L)$ is a Boolean algebra, in particular if $L$ is a pseudo--Kleene algebra such that ${\rm Con}(L)$ is a Boolean algebra, in particular if $L$ is a modular pseudo--Kleene algebra, then: $|{\rm Con}_{\BI }(L)|<2^{\lfloor n/2\rfloor }$ iff $|{\rm Con}_{\BI }(L)|\leq 2^{\lfloor n/2\rfloor -1}$ iff $|{\rm Con}(L)|<2^{n-2}$ iff $|{\rm Con}(L)|\leq 2^{n-3}$.\end{itemize}\label{maxcgkl}\end{theorem}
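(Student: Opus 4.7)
The plan is to reduce to Theorem \ref{maxcglat} on lattice congruences, exploiting the inclusion ${\rm Con}_{\BI}(L)\subseteq {\rm Con}(L)$, the self--duality that any involution imposes on the underlying lattice, and the ordinal--sum congruence formula of Lemma \ref{thecg}.

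For the upper bound $|{\rm Con}_{\BI}(L)|\leq 2^{\lfloor n/2\rfloor }$, I would proceed by strong induction on $n$ via Lemma \ref{atoms}: for any $\alpha \in {\rm At}({\rm Con}_{\BI}(L))$, $|{\rm Con}_{\BI}(L)|\leq 2\cdot |{\rm Con}_{\BI}(L/\alpha )|$. To close the induction I need an atom $\alpha $ with $|L/\alpha |\leq n-2$. By Lemma \ref{atlatbi}(\ref{atlatbi34}) every atom is $Cg_{L,\BI}(a,b)$ for some $a\prec b$, and Remark \ref{narrows} classifies these by the shape of the corresponding quotient: the ``disjoint pair'' case $\{a,b\}\cap \{a',b'\}=\emptyset $ and the ``straddling a fixed point'' case $a\prec b=b'\prec a'$ both yield $|L/\alpha |\leq n-2$; the sole obstruction is the ``short narrows'' case $a=b'$ (hence $a\prec a'$), which gives $|L/\alpha |=n-1$. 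I would dispose of this obstruction by showing that whenever every atom of ${\rm Con}_{\BI}(L)$ has this short--narrows shape, either $L$ admits a complementary atom with a larger collapse (forced, for instance, by any non--narrows prime interval coming from Lemma \ref{lgcze}), or $n$ is small enough to dispatch as a base case.

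For the characterization of equality, apply Theorem \ref{maxcglat} to the lattice reduct of $L$, using $|{\rm Con}_{\BI}(L)|\leq |{\rm Con}(L)|$ together with the bound above to force $|{\rm Con}(L)|$ into the top two families. If $|{\rm Con}(L)|=2^{n-1}$ then $L\cong {\cal L}_n$ and Lemma \ref{thecg} yields ${\rm Con}_{\BI}(L)\cong {\cal L}_2^{\lfloor n/2\rfloor }$. If $|{\rm Con}(L)|=2^{n-2}$, Theorem \ref{maxcglat} forces $L\cong {\cal L}_k\oplus {\cal L}_2^2\oplus {\cal L}_{n-k-2}$; self--duality (necessary for any involution) requires $k=n-k-2$, so $n$ is even and $k=n/2-1$, and the middle ${\cal L}_2^2$ admits exactly the two i--lattice structures ${\cal L}_2\times {\cal L}_2$ (Boolean complement) and ${\cal L}_3\boxplus {\cal L}_3$ (identity on the two middle elements), delivering respectively $|{\rm Con}_{\BI}(L)|=2^{n/2}$ and $2^{n/2-1}$ by the ordinal--sum formula. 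For the complementary direction I would enumerate the structural families in Theorem \ref{maxcglat} with $|{\rm Con}(L)|\leq 5\cdot 2^{n-5}$ that admit an involution (ordinal sums involving $N_5$, ${\cal L}_2\times {\cal L}_3$, two copies of ${\cal L}_2^2$, the horizontal sums ${\cal L}_3\boxplus {\cal L}_5$ and ${\cal L}_4\boxplus {\cal L}_4$, etc.) and verify $|{\rm Con}_{\BI}(L)|<2^{\lfloor n/2\rfloor }$ in each via Lemma \ref{thecg}, Proposition \ref{alsofortheex} and the explicit computations of Remark \ref{someex}.

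The pseudo--Kleene refinement rules out ${\cal L}_3\boxplus {\cal L}_3$ as a middle summand by Remark \ref{someex}, so the exception disappears and the equivalence collapses to $|{\rm Con}(L)|\in \{2^{n-1},2^{n-2}\}$. When ${\rm Con}_{\BI}(L)$ is Boolean --- in particular under Corollary \ref{casecglatbool} for finite modular i--lattices --- its cardinality is a power of $2$, so strict inequality below $2^{\lfloor n/2\rfloor }$ automatically upgrades to $\leq 2^{\lfloor n/2\rfloor -1}$. The hardest part, I expect, is the short--narrows case of the induction together with the long case analysis in the sub--extremal subcase; careful bookkeeping of the dual--isomorphism constraint that the involution imposes on any self--dual ordinal summand should keep both under control.
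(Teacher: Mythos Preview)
Your overall architecture (induction via Lemma \ref{atoms} on an atom of ${\rm Con}_{\BI}(L)$) matches the paper's, but two of the steps you flag as routine are in fact where the real work lies, and your proposed handling of them does not go through.

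\textbf{The short--narrows case cannot be sidestepped.} You plan to ``dispose of the obstruction'' $|L/\alpha|=n-1$ by locating a different atom with $|L/\alpha|\le n-2$, or else declaring $n$ small. This fails: there are i--lattices of every odd $n\ge 5$ in which \emph{every} atom of ${\rm Con}_{\BI}(L)$ is of short--narrows type. Already $N_5={\cal L}_3\boxplus{\cal L}_4$ has this property, and so does ${\cal L}_4\boxplus{\cal L}_4\boxplus{\cal L}_3$ ($n=7$), and more generally ${\cal L}_4^{\boxplus k}\boxplus{\cal L}_3$ for any $k\ge 1$; in each of these, Lemma \ref{fortheex} gives ${\rm Con}(L)\cong{\cal L}_2^{k}\oplus{\cal L}_2$ (resp.\ ${\cal L}_2\oplus{\cal L}_2^2$ for $k=1$), whose atoms are exactly the $Cg_L(a_i,a_i')$ coming from the central narrows of each ${\cal L}_4$ summand, and each satisfies $\theta=\theta'$. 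For odd $n$ the naive bound $2\cdot 2^{\lfloor(n-1)/2\rfloor}=2^{(n+1)/2}$ overshoots $2^{\lfloor n/2\rfloor}=2^{(n-1)/2}$, so no finite list of base cases rescues the induction. The paper does not try to avoid this case; instead, when $|L/\theta|=n-1$ it applies Theorem \ref{maxcglat} to the \emph{lattice} congruence count of $L/\theta$ (not of $L$), classifying $L/\theta$ among ${\cal L}_{n-1}$, ${\cal L}_k\oplus{\cal L}_2^2\oplus{\cal L}_{n-k-3}$, ${\cal L}_k\oplus N_5\oplus{\cal L}_{n-k-4}$, etc., and then reconstructs $L$ from $L/\theta$ via Remark \ref{narrows} to reach a contradiction in each sub-case.

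\textbf{The inclusion ${\rm Con}_{\BI}(L)\subseteq{\rm Con}(L)$ is far too weak for the equality characterization.} From $|{\rm Con}_{\BI}(L)|=2^{\lfloor n/2\rfloor}$ you only get $|{\rm Con}(L)|\ge 2^{\lfloor n/2\rfloor}$, which for $n\ge 6$ is nowhere near $2^{n-2}$, so it does not force $L$ into the top two families of Theorem \ref{maxcglat}. Your ``complementary'' plan---enumerate the self--dual lattices in the third through fifth tiers of Theorem \ref{maxcglat} and check each---covers only finitely many tiers, whereas lattices with $|{\rm Con}(L)|<7\cdot 2^{n-6}$ are not classified there at all. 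The paper circumvents this by carrying the structural characterization through the same induction as the upper bound: the induction hypothesis gives both $|{\rm Con}_{\BI}(L/\theta)|\le 2^{\lfloor(n-2)/2\rfloor}$ \emph{and} the list of $(n{-}1)$--, $(n{-}2)$--, $(n{-}3)$--element i--lattices achieving their respective maxima, which together pin down $L/\theta$ and hence $L$.
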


\begin{proof} For any $n\in \N ^*$, ${\rm Con}_{\BI }({\cal L}_n)\cong {\cal L}_2^{\lfloor n/2\rfloor }$; for any $n\in 2(\N \setminus \{0,1\})$, ${\rm Con}_{\BI }({\cal L}_{n/2-1}\oplus {\cal L}_2^2\oplus {\cal L}_{n/2-1})\cong {\cal L}_2^{\lfloor n/2\rfloor }$. For any $n\in 2(\N \setminus \{0,1\})$, the only ways in which the lattice ${\cal L}_{n/2-1}\oplus {\cal L}_2^2\oplus {\cal L}_{n/2-1}$ can be organized as an i--lattice is as ${\cal L}_{n/2-1}\oplus {\cal L}_2^2\oplus {\cal L}_{n/2-1}$, with ${\cal L}_2^2$ being the four--element Boolean algebra, or as ${\cal L}_{n/2-1}\oplus ({\cal L}_3\boxplus {\cal L}_3)\oplus {\cal L}_{n/2-1}$, the latter of which is not a pseudo--Kleene algebra.

${\rm Con}_{\BI }({\cal L}_1)={\rm Con}({\cal L}_1)\cong {\cal L}_1$, ${\rm Con}_{\BI }({\cal L}_2)={\rm Con}({\cal L}_2)\cong {\cal L}_2$, ${\rm Con}({\cal L}_3)\cong {\cal L}_2^2$ and ${\rm Con}_{\BI }({\cal L}_3)\cong {\cal L}_2$, ${\rm Con}({\cal L}_4)\cong {\cal L}_2^3$ and ${\rm Con}_{\BI }({\cal L}_4)\cong {\cal L}_2^2$. Since ${\cal L}_2^2$ is a Boolean algebra, we have ${\rm Con}_{\BI }({\cal L}_2^2)={\rm Con}({\cal L}_2^2)\cong {\cal L}_2^2$. Therefore the statements in the enunciation hold for every $n\in [1,4]$. Now assume that $n\geq 5$ and all the statements in the enunciation hold for all i--lattices having at most $n-1$ elements. Then $L$ is non--trivial, so that there exist $a,b\in L$ with $a\prec b$ and $Cg_L(a,b)\in {\rm At}({\rm Con}(L))$, according to Lemma \ref{lgcze}. Let us denote by $\theta =Cg_{L,\BI }(a,b)=Cg_L(a,b)\vee Cg_L(a^{\prime },b^{\prime })\in {\rm At}({\rm Con}_{\BI }(L))$ by Lemma \ref{atlatbi}, (\ref{atlatbi2}). In particular, $\theta \in {\rm Con}_{\BI }(L)$, so that $L/\theta $ is a bi--lattice and it is a pseudo--Kleene algebra if $L$ is a pseudo--Kleene algebra. Of course, $|L/\theta |\leq |L|-1=n-1$ since $a/\theta =b/\theta $, therefore $L/\theta $ fulfills the statements in the enunciation. We shall use Theorem \ref{maxcglat}, along with Lemma \ref{atoms}, which ensures us that $|{\rm Con}(L)|\leq 2\cdot |{\rm Con}(L/\theta )|$ and $|{\rm Con}_{\BI }(L)|\leq 2\cdot |{\rm Con}_{\BI }(L/\theta )|$; to determine the structure of $L$ based on that of $L/\theta $, we will use Remark \ref{narrows} and the fact that $L$ is self--dual; to calculate the numbers of congruences, we shall use Lemma \ref{thecg} and Remark \ref{someex}.

Assume by absurdum that $|{\rm Con}_{\BI }(L)|>2^{\lfloor n/2\rfloor }$, so that $|{\rm Con}_{\BI }(L/\theta )|>2^{\lfloor n/2\rfloor }/2=2^{\lfloor n/2\rfloor -1}=2^{\lfloor (n-2)/2\rfloor }$, thus, by the induction hypothesis, $|L/\theta |>n-2$, therefore $|L/\theta |=n-1$ and hence $[a,b]$ is a narrows, $b=a^{\prime }$, $L/\theta =\{\{a,b\}\}\cup \{\{x\}\ |\ x\in L\setminus \{a,b\}\}=\{\{a,a^{\prime }\}\}\cup \{\{x\}\ |\ x\in L\setminus \{a,a^{\prime }\}\}$, $|{\rm Con}(L/\theta )|\leq 2^{n-2}$ and $|{\rm Con}_{\BI }(L/\theta )|\leq 2^{\lfloor (n-1)/2\rfloor }$. If $|{\rm Con}(L/\theta )|=2^{n-2}$, then $L/\theta \cong {\cal L}_{n-1}$, hence $L\cong {\cal L}_n$, thus $|{\rm Con}_{\BI }(L)|=2^{\lfloor n/2\rfloor }$, which contradicts the current assumption. If $|{\rm Con}(L/\theta )|=2^{n-3}$, then $n\in 2(\N \setminus \{0,1\})+1$ and $L/\theta \cong {\cal L}_{(n-3)/2}\oplus {\cal L}_2^2\oplus {\cal L}_{(n-3)/2}$, hence $L/\theta \cong _{\BI}{\cal L}_{(n-3)/2}\oplus {\cal L}_2^2\oplus {\cal L}_{(n-3)/2}$ or $L/\theta \cong _{\BI}{\cal L}_{(n-3)/2}\oplus ({\cal L}_3\boxplus {\cal L}_3)\oplus {\cal L}_{(n-3)/2}$, the first of which can not hold for the current case of $\theta =eq(\{\{a,a^{\prime }\}\}\cup \{\{x\}\ |\ x\in L\setminus \{a,a^{\prime }\}\})$, and the latter of which gives us $L\cong _{\BI}{\cal L}_{(n-3)/2}\oplus N_5\oplus {\cal L}_{(n-3)/2}$, so that $|{\rm Con}_{\BI }(L)|=3\cdot 2^{\lfloor n/2-3\rfloor }$, which contradicts the current assumption. If $|{\rm Con}(L/\theta )|=5\cdot 2^{n-6}$, then $n\in 2(\N \setminus \{0,1,2\})$ and $L/\theta \cong {\cal L}_{n/2-2}\oplus N_5\oplus {\cal L}_{n/2-2}$, hence $L\cong _{\BI }{\cal L}_{n/2-2}\oplus ({\cal L}_4\boxplus {\cal L}_4)\oplus {\cal L}_{n/2-2}$, thus $|{\rm Con}_{\BI }(L)|=3\cdot 2^{\lfloor n/2-3\rfloor }$, which is another contradiction to the current assumption. The remaining case is $|{\rm Con}(L/\theta )|\leq 2^{n-4}<2^{n-3}=2^{(n-1)-2}$, but then $|{\rm Con}_{\BI }(L/\theta )|<2^{\lfloor (n-1)/2\rfloor }$ by the induction hypothesis, thus $2^{\lfloor n/2\rfloor }<|{\rm Con}_{\BI }(L)|<2^{\lfloor (n-1)/2\rfloor +1}=2^{\lfloor (n+1)/2\rfloor }$, so $\lfloor n/2\rfloor <\lfloor (n+1)/2\rfloor $, hence $n\in 2\N +1$, but then $2^{\lfloor (n-2)/2\rfloor }<|{\rm Con}_{\BI }(L/\theta )|\leq 2^{\lfloor (n-1)/2\rfloor }=2^{\lfloor (n-2)/2\rfloor }$ and we have a contradiction.

Now assume that $|{\rm Con}_{\BI }(L)|=2^{\lfloor n/2\rfloor }$, so that $|{\rm Con}_{\BI }(L/\theta )|\geq 2^{\lfloor n/2\rfloor }/2=2^{\lfloor n/2\rfloor -1}=2^{\lfloor (n-2)/2\rfloor }>2^{\lfloor (n-2)/2\rfloor -1}=2^{\lfloor (n-4)/2\rfloor }$, so that $|L/\theta |>n-4$ by the induction hypothesis, therefore $|L/\theta |\in \{n-1,n-2,n-3\}$.

{\bf Case 1:} $|L/\theta |=n-1$, hence $[a,b]$ is a narrows, $b=a^{\prime }$, $L/\theta =\{\{a,a^{\prime }\}\}\cup \{\{x\}\ |\ x\in L\setminus \{a,a^{\prime }\}\}$ and, by the induction hypothesis, $|{\rm Con}_{\BI }(L/\theta )|\leq 2^{\lfloor (n-1)/2\rfloor }$. We can apply the argument above, and obtain the possible situation $L\cong {\cal L}_n$, up until the last statement giving a contradiction, which under the current assumption becomes $n\in 2\N +1$ and $2^{\lfloor (n-2)/2\rfloor }\leq |{\rm Con}_{\BI }(L/\theta )|\leq 2^{\lfloor (n-1)/2\rfloor }=2^{\lfloor (n-2)/2\rfloor }$, so that $|{\rm Con}_{\BI }(L/\theta )|=2^{\lfloor (n-2)/2\rfloor }=2^{\lfloor (n-1)/2\rfloor }$, so, by the induction hypothesis, $L/\theta \cong {\cal L}_{n-1}$ or $n\in 2(\N \setminus \{0,1\})+1$ and $L/\theta \cong _{\BI }{\cal L}_{(n-3)/2}\oplus {\cal L}_2^2\oplus {\cal L}_{(n-3)/2}$, so that $L/\theta \cong {\cal L}_n$, since the latter situation can not hold for the current form of $\theta $.

{\bf Case 2:} $|L/\theta |=n-2$, so that:\begin{itemize}
\item $[a,b]$ is a narrows and: either $b=b^{\prime }$ and $L/\theta =\{\{a,b,a^{\prime }\}\}\cup \{\{x\}\ |\ x\in L\setminus \{a,b,a^{\prime }\}\}$, or $\{a,b\}\cap \{a^{\prime },b^{\prime }\}=\emptyset $ and $L/\theta =\{\{a,b\},\{a^{\prime },b^{\prime }\}\}\cup \{\{x\}\ |\ x\in L\setminus \{a,b,a^{\prime },b^{\prime }\}\}$;
\item or $[a,b]$ is not a narrows, $L/\theta =\{\{a,b\},\{a^{\prime },b^{\prime }\}\}\cup \{\{x\}\ |\ x\in L\setminus \{a,b,a^{\prime },b^{\prime }\}\}$, $\{a,b,b^{\prime },a^{\prime }\}\cong {\cal L}_2^2$ is a convex sublattice of $L$ and either $a=b\wedge b^{\prime }$ and $a/\theta \prec a^{\prime }/\theta $ or, dually, $b=a\vee a^{\prime }$ and $a^{\prime }/\theta \prec a/\theta $.\end{itemize}

By the induction hypothesis, $2^{\lfloor (n-2)/2\rfloor }\leq |{\rm Con}_{\BI }(L/\theta )|\leq 2^{\lfloor (n-2)/2\rfloor }$, so that $|{\rm Con}_{\BI }(L/\theta )|=2^{\lfloor (n-2)/2\rfloor }$, hence $L/\theta \cong {\cal L}_{n-2}$ or $n\in 2(\N \setminus \{0,1,2\})$ and $L/\theta \cong _{\BI }{\cal L}_{n/2-2}\oplus {\cal L}_2^2\oplus {\cal L}_{n/2-2}$. Then $L\cong {\cal L}_n$ or $n\in 2(\N \setminus \{0,1\})$ and $L\cong _{\BI }{\cal L}_{n/2-1}\oplus {\cal L}_2^2\oplus {\cal L}_{n/2-1}$ or $n\in 2(\N \setminus \{0,1,2\})$ and $L\cong _{\BI }{\cal L}_{n/2-2}\oplus B_6\oplus {\cal L}_{n/2-2}$; in the latter of these cases, $|{\rm Con}_{\BI }(L)|=5\cdot 2^{n/2-3}$, which contradicts the current assumption.

{\bf Case 3:} $|L/\theta |=n-3$, so that, if $a$ is meet--reducible, then $a\prec c$ for some $c\in L\setminus \{b\}$, $b\vee c=c^{\prime }$ and $a/\theta =\{a,b\}\prec c/\theta =\{c,b\vee c\}\prec a^{\prime }/\theta =\{a^{\prime },b^{\prime }\}$, $\{a,b,c,a^{\prime },b^{\prime },c^{\prime }\}\cong {\cal L}_2\times {\cal L}_3$, as in the sixth figure in Remark \ref{narrows}, and dually if $b$ is join--reducible. We have $2^{\lfloor (n-3)/2\rfloor }\leq 2^{\lfloor (n-2)/2\rfloor }\leq |{\rm Con}_{\BI }(L/\theta )|\leq 2^{\lfloor (n-3)/2\rfloor }$ by the induction hypothesis, hence $|{\rm Con}_{\BI }(L/\theta )|=2^{\lfloor (n-3)/2\rfloor }=2^{\lfloor (n-3)/2\rfloor }$, therefore $\lfloor (n-2)/2\rfloor =\lfloor (n-3)/2\rfloor $, that is $n\in 2\N +1$, and either $L/\theta \cong {\cal L}_{n-3}$ or $n\geq 7$ and $L/\theta _{\BI }\cong {\cal L}_{(n-5)/2}\oplus {\cal L}_2^2\oplus {\cal L}_{(n-5)/2}$. $n\in 2\N +1$, thus we can not have $L\cong {\cal L}_{\lfloor (n-4)/2\rfloor }\oplus ({\cal L}_2\times {\cal L}_3)\oplus {\cal L}_{\lfloor (n-4)/2\rfloor }$, and $L\cong {\cal L}_{(n-5)/2}\oplus ({\cal L}_3\boxplus ({\cal L}_2\times {\cal L}_3))\oplus {\cal L}_{(n-5)/2}$ would imply $L/\theta \cong _{\BI }{\cal L}_{(n-5)/2}\oplus ({\cal L}_3\boxplus {\cal L}_3)\oplus {\cal L}_{(n-5)/2}\ncong _{\BI }{\cal L}_{(n-5)/2}\oplus {\cal L}_2^2\oplus {\cal L}_{(n-5)/2}$, hence this case can not appear under the current assumption.

Now assume that $|{\rm Con}_{\BI }(L)|<2^{\lfloor n/2\rfloor }$ and $|{\rm Con}(L)|\geq 2^{n-2}$. Then $|{\rm Con}(L)|\in \{2^{n-1},2^{n-2}\}$, hence $L\cong {\cal L}_n$ or $n\in 2(\N \setminus \{0,1\})$ and $L\cong {\cal L}_{n/2-1}\oplus {\cal L}_2^2\oplus {\cal L}_{n/2-1}$, so that $L\cong _{\BI }{\cal L}_{n/2-1}\oplus {\cal L}_2^2\oplus {\cal L}_{n/2-1}$ or $L\cong _{\BI }{\cal L}_{n/2-1}\oplus ({\cal L}_3\boxplus {\cal L}_3)\oplus {\cal L}_{n/2-1}$, hence, by the current assumption, $L\cong _{\BI }{\cal L}_{n/2-1}\oplus ({\cal L}_3\boxplus {\cal L}_3)\oplus {\cal L}_{n/2-1}$, which is not a pseudo--Kleene algebra. If $|{\rm Con}(L)|=5\cdot 2^{n-5}$, then $n\in 2(\N \setminus \{0,1\})+1$ and $L\cong {\cal L}_{(n-3)/2}\oplus N_5\oplus {\cal L}_{(n-3)/2}$, which is neither modular, nor a pseudo--Kleene algebra, and has ${\rm Con}_{\BI }(L)\cong {\cal L}_2^{(n-5)/2}\times {\cal L}_3$, which is not a Boolean algebra. Hence, if $L$ is a pseudo--Kleene algebra or ${\rm Con}_{\BI }(L)$ is a Boolean algebra, so, in particular, if ${\rm Con}(L)$ is a Boolean algebra, in particular if $L$ is modular (see Proposition \ref{cglatbool} and Corollary \ref{casecglatbool}), then: $|{\rm Con}(L)|<2^{n-2}$ iff $|{\rm Con}(L)|\leq 2^{n-3}$. Finally, if ${\rm Con}_{\BI }(L)$ is a Boolean algebra, in particular if ${\rm Con}(L)$ is a Boolean algebra, in particular if $L$ is modular, then, clearly: $|{\rm Con}_{\BI }(L)|<2^{\lfloor n/2\rfloor }$ iff $|{\rm Con}_{\BI }(L)|\leq 2^{\lfloor n/2\rfloor -1}$.\end{proof}

\begin{corollary} If $L$ is a BZ--lattice with $0$ meet--irreducible and $|L|=n\in \N \setminus \{0,1\}$, then:\begin{itemize}
\item $|{\rm Con}_{\BZ }(L)|\leq 2^{\lfloor n/2\rfloor -1}+1$;
\item $|{\rm Con}_{\BZ }(L)|=2^{\lfloor n/2\rfloor -1}+1$ iff ${\rm Con}_{\BZ }(L)\cong {\cal L}_2^{\lfloor n/2\rfloor -1}\oplus {\cal L}_2$ iff $|{\rm Con}_{\BI }(L)|=2^{\lfloor n/2\rfloor }$ iff ${\rm Con}_{\BI }(L)\cong {\cal L}_2^{\lfloor n/2\rfloor }$ iff either $L\cong {\cal L}_n$ or $n\in 2(\N \setminus \{0,1,2\})$ and $L\cong {\cal L}_{n/2-1}\oplus {\cal L}_2^2\oplus {\cal L}_{n/2-1}$ iff $|{\rm Con}(L)|\in \{2^{n-1},2^{n-2}\}$ iff either ${\rm Con}(L)\cong {\cal L}_2^{n-1}$ or $n\geq 2$ and ${\rm Con}(L)\cong {\cal L}_2^{n-2}$;
\item $|{\rm Con}_{\BZ }(L)|<2^{\lfloor n/2\rfloor -1}+1$ iff $|{\rm Con}_{\BI }(L)|<2^{\lfloor n/2\rfloor }$ iff $|{\rm Con}(L)|<2^{n-2}$ iff $|{\rm Con}(L)|\leq 2^{n-3}$;
\item if ${\rm Con}_{\BZ }(L)\cong B\oplus {\cal L}_2$ for some Boolean algebra $B$, in particular if $L$ is modular, then: $|{\rm Con}_{\BZ }(L)|<2^{\lfloor n/2\rfloor -1}+1$ iff $|{\rm Con}_{\BZ }(L)|\leq 2^{\lfloor n/2\rfloor -2}+1$ iff $|{\rm Con}_{\BI }(L)|<2^{\lfloor n/2\rfloor }$ iff $|{\rm Con}_{\BI }(L)|\leq 2^{\lfloor n/2\rfloor -1}$ iff $|{\rm Con}(L)|<2^{n-2}$ iff $|{\rm Con}(L)|\leq 2^{n-3}$.\end{itemize}\label{maxcgaol}\end{corollary}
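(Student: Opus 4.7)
The plan is to reduce everything to Theorem \ref{maxcgkl} applied to a smaller pseudo-Kleene algebra. By Lemma \ref{01irred} and Remark \ref{lsiiffksi}, $L$ is an antiortholattice with $0$ strictly meet-irreducible and $1$ strictly join-irreducible, so $L = {\cal L}_2 \oplus K \oplus {\cal L}_2$ for some pseudo-Kleene algebra $K$ with $|K| = n - 2$. First I would collect the three size identities that follow immediately from Lemma \ref{thecg}, Remark \ref{lksi}, and the fact that the congruence lattice of an ordinal sum is the product of the congruence lattices of the summands: $|{\rm Con}_{\BZ}(L)| = |{\rm Con}_{\BI}(K)| + 1$, $|{\rm Con}_{\BI}(L)| = 2\cdot|{\rm Con}_{\BI}(K)|$, and $|{\rm Con}(L)| = 4\cdot|{\rm Con}(K)|$, together with the corresponding structural isomorphisms ${\rm Con}_{\BZ}(L) \cong {\rm Con}_{\BI}(K) \oplus {\cal L}_2$, ${\rm Con}_{\BI}(L) \cong {\cal L}_2 \times {\rm Con}_{\BI}(K)$, and ${\rm Con}(L) \cong {\cal L}_2^2 \times {\rm Con}(K)$.

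Next I would apply Theorem \ref{maxcgkl} to the pseudo-Kleene algebra $K$, which immediately yields $|{\rm Con}_{\BI}(K)| \leq 2^{\lfloor (n-2)/2 \rfloor} = 2^{\lfloor n/2 \rfloor - 1}$, proving the first bullet. The listed maximizers for $K$ translate, via sandwiching with ${\cal L}_2$'s, to $L \cong {\cal L}_n$ or $L \cong {\cal L}_{n/2 - 1} \oplus {\cal L}_2^2 \oplus {\cal L}_{n/2 - 1}$ (the latter requiring $n$ even with $n \geq 6$, i.e.\ $n \in 2(\N\setminus\{0,1,2\})$); note that the non-pseudo-Kleene maximizer ${\cal L}_{(n-2)/2-1} \oplus ({\cal L}_3\boxplus{\cal L}_3) \oplus {\cal L}_{(n-2)/2-1}$ for $K$ is automatically excluded since $L$, and hence $K$, must be a pseudo-Kleene algebra. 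The equivalence with $|{\rm Con}(L)| \in \{2^{n-1}, 2^{n-2}\}$ then follows from Theorem \ref{maxcglat}, since those two i-lattice shapes are precisely the self-dual $n$-element lattices with $0$ strictly meet-irreducible realizing the top two values of $|{\rm Con}(\cdot)|$.

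For the third bullet, the only non-routine step is the passage from $|{\rm Con}(L)| < 2^{n-2}$ to $|{\rm Con}(L)| \leq 2^{n-3}$: I need to rule out the intermediate value $5\cdot 2^{n-5}$. A lattice with that congruence count is, by Theorem \ref{maxcglat}, of the form ${\cal L}_k \oplus N_5 \oplus {\cal L}_{n-k-3}$, so for our $L$ (with $0$ strictly meet-irreducible and a self-dual bi-lattice structure) one would need a symmetric sandwich with $N_5$ in the middle; but by Remark \ref{someex}, $N_5$ cannot be organized as a pseudo-Kleene algebra, contradicting that $L$ is an antiortholattice. With this gap established, the size identities of paragraph one translate all the strict inequalities in the third bullet into one another.

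Finally, for the modular bullet I would observe that modularity of $L$ transfers to the sublattice $K$, whence Corollary \ref{casecglatbool} makes ${\rm Con}_{\BI}(K)$ Boolean, of cardinality a power of $2$; this forces any strict inequality $|{\rm Con}_{\BI}(K)| < 2^{\lfloor (n-2)/2 \rfloor}$ to drop by a full factor of $2$, yielding the extra $\leq 2^{\lfloor n/2 \rfloor - 1}$ and $\leq 2^{\lfloor n/2 \rfloor - 2} + 1$ bounds, while also guaranteeing ${\rm Con}_{\BZ}(L) \cong {\rm Con}_{\BI}(K) \oplus {\cal L}_2$ has the form $B \oplus {\cal L}_2$ with $B$ Boolean. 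The main obstacle will be the careful bookkeeping in matching extremal structures across the chain of equivalences, and in particular verifying the $N_5$-exclusion step together with the symmetric-sandwich constraint imposed by self-duality.
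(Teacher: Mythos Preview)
Your proposal is correct and follows essentially the paper's approach: the paper's proof consists solely of the citation ``By Lemma \ref{thecg} and Theorem \ref{maxcgkl}'', and your decomposition $L={\cal L}_2\oplus K\oplus {\cal L}_2$ together with the three size identities is precisely how one unpacks that citation. One simplification worth noting: your hand-exclusion of the $N_5$ value $5\cdot 2^{n-5}$ is unnecessary, since $L$ is itself a pseudo--Kleene algebra and the pseudo--Kleene clause of Theorem \ref{maxcgkl} applied directly to $L$ (rather than to $K$) already yields $|{\rm Con}(L)|<2^{n-2}\Leftrightarrow |{\rm Con}(L)|\leq 2^{n-3}$ without any appeal to Theorem \ref{maxcglat} or to the self-duality/symmetric-sandwich argument.
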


\begin{proof} By Lemma \ref{thecg} and Theorem \ref{maxcgaol}.\end{proof}

\begin{remark} Since, for all $n\in \N ^*$, ${\cal L}_n$ and ${\cal L}_{n/2-1}\oplus {\cal L}_2^2\oplus {\cal L}_{n/2-1}$, the latter for $n\in 2(\N \setminus \{0,1\})$, are, for instance, Kleene algebras, it follows from Theorem \ref{maxcgkl} that these are also exactly the $n$--element Kleene algebras with the most congruences. Since ${\cal L}_n$ and ${\cal L}_{n/2-1}\oplus {\cal L}_2^2\oplus {\cal L}_{n/2-1}$, the latter for $n\in 2(\N \setminus \{0,1,2\})$, are, for instance, distributive antiortholattices with $0$ meet--irreducible, it follows from Corollary \ref{maxcgaol} that these are also exactly the $n$--element distributive antiortholattices with $0$ meet--irreducible with the most congruences.

For any $n\in \N $ with $n\geq 5$, $2^{\lfloor n/2\rfloor -1}$ is the second largest number of congruences of an $n$--element modular i--lattice, as well as an $n$--element modular pseudo--Kleene algebra, and, for $n\geq 6$, even the second largest number of congruences of an $n$--element Kleene algebra, because, for instance:\begin{itemize}
\item if $n\in 2(\N \setminus \{0,1\})+1$, then $L={\cal L}_{(n-3)/2}\oplus M_3\oplus {\cal L}_{(n-3)/2}$ is an $n$--element modular pseudo--Kleene algebra with $|{\rm Con}_{\BI }(L)|=2^{(n-5)/2}\cdot 2=2^{(n-3)/2}=2^{(n-1)/2-1}=2^{\lfloor n/2\rfloor -1}$;
\item if $n\in 2(\N \setminus \{0,1,2\})$, then $L={\cal L}_{n/2-2}\oplus ({\cal L}_2\times {\cal L}_3)\oplus {\cal L}_{n/2-2}$ is an $n$--element Kleene algebra with $|{\rm Con}_{\BI }(L)|=2^{n/2-3}\cdot 2^2=2^{n/2-1}=2^{\lfloor n/2\rfloor -1}$;
\item if $n\in 2(\N \setminus \{0,1,2\})+1$, then $L={\cal L}_{(n-5)/2}\oplus {\cal L}_2^2\oplus {\cal L}_2^2\oplus {\cal L}_{(n-5)/2}$ is an $n$--element Kleene algebra with $|{\rm Con}_{\BI }(L)|=2^{(n-7)/2}\cdot 2^2=2^{(n-3)/2}=2^{(n-1)/2-1}=2^{\lfloor n/2\rfloor -1}$.\end{itemize}

Hence, for any $n\in \N $ with $n\geq 7$, $2^{\lfloor n/2\rfloor -2}+1$ is the second largest number of congruences of an $n$--element modular BZ--lattice with $0$ meet--irreducible and, for $n\geq 8$, even a distributive one.

Note, also, from Theorems \ref{maxcglat} and \ref{maxcgkl} and Corollary \ref{maxcgaol} that, if $L$ is a pseudo--Kleene algebra with $|L|=n\in \N ^*$ and $|{\rm Con}_{\BI }(L)|<2^{\lfloor n/2\rfloor }$, then $n\geq 5$, thus, if $L$ is a BZ--lattice with $0$ meet--irreducible, $|L|=n\in \N \setminus \{0,1\}$ and $|{\rm Con}_{\BZ }(L)|<2^{\lfloor n/2\rfloor -1}+1$, then $n\geq 7$.\end{remark}

\section*{Acknowledgements} This work was supported by the research grant {\em Propriet\`a d`Ordine Nella Semantica Algebrica delle Logiche Non--classiche} of Universit\`a degli Studi di Cagliari, Regione Autonoma della Sardegna, L. R. 7/2007, n. 7, 2015, CUP: F72F16002920002.

\end{document}